\newcommand{\e}{\mathrm{e}}
\newcommand{\im}{\mathrm{i}}
\newcommand{\dif}{\mathrm{d}}
\newcommand{\Z}{\mathbb{Z}}
\newcommand{\D}{\mathbb{D}}
\newcommand{\abs}[1]{|#1|}
\newcommand{\Abs}[1]{\left|#1\right|}
\newcommand{\norm}[1]{\|#1\|}
\newcommand{\Norm}[1]{\left\|#1\right\|}
\newcommand{\inner}[2]{\left\langle #1|#2 \right\rangle}
\newcommand{\seq}[1]{(#1)}
\newcommand{\set}[1]{\big\{#1\big\}}
\newcommand{\Set}[1]{\left\{#1\right\}}
\newcommand{\N}{\mathbb{N}}
\newcommand{\R}{\mathbb{R}}
\newcommand{\T}{\mathbb{T}}
\newcommand{\C}{\mathbb{C}}
\newcommand{\Bigoh}[1]{\mathcal{O} \left( #1 \right)}
\newcommand{\Littleoh}[1]{o \left( #1 \right)}
\newcommand{\Hp}{\mathscr{H}}
\newcommand{\supp}{\mathrm{supp \;}}
\renewcommand{\Re}{\operatorname{Re}}
\newtheorem{theorem}{Theorem}
\newtheorem{lemma}{Lemma}
\newtheorem*{thm}{Theorem}
\title[Hardy spaces of Dirichlet series]{On the boundary behaviour of the Hardy spaces of Dirichlet series
and a frame bound estimate}
\author{Jan-Fredrik Olsen}
\address{Department of Mathematical Sciences, Norwegian University of
Science and Technology (NTNU), NO-7491 Trondheim, Norway}
\email{janfreol@math.ntnu.no}
\author{Eero Saksman}
\address{Department of Mathematics and Statistics, University of Helsinki, P. O. Box 68 (Gustaf Hällströmin Katu 2B), FI-00014 University of Helsinki, Finland}
\email{eero.saksman@helsinki.fi}
\thanks{The first author is supported by the Research Council of Norway grant 160192/V30. 
}
\thanks{The second author was supported by the Academy of Finland projects no. 113826 and 118765, and by the Finnish Centre of Excellence in Analysis and Dynamics Research.}
\subjclass[2000]{30B50 (primary), 42B30,  42C15, 46E15 (secondary)}
\begin{document}

\begin{abstract}
	A range of Hardy-like spaces of ordinary Dirichlet series, called the Dirichlet-Hardy spaces $\Hp^p$, $p \geq 1$, have been 	the focus of increasing interest among researchers 	following a paper of Hedenmalm, Lindqvist and Seip \cite{hls97}.
	The Dirichlet series in these spaces converge on a certain half-plane,
	where one may also define the classical Hardy spaces $H^p$.
	In this paper, we compare the boundary behaviour of elements in $\Hp^p$ and $H^p$.
	Moreover, Carleson measures of the spaces $\Hp^p$ are studied.
	Our main result shows that for certain cases the following statement holds true.
	Given an interval on the boundary of the half-plane of definition and a
	function in the classical Hardy space, it possible to find a function in the corresponding
	Dirichlet-Hardy space such that their difference has an analytic continuation across this
	interval.
\end{abstract}

\maketitle

\section{introduction}

%
%
An ordinary Dirichlet series is a function of the form 
$\sum a_n n^{-s}$ with $(a_n)_{n \in \N} \subset \C$, where $s = \sigma + \im t$ denotes the complex variable. 
In the last decade there has been interest in a range of Hardy spaces of ordinary Dirichlet series \cite{hls97,hls99,gordon_hedenmalm99,bayart02,hedenmalm_saksman03,konyagin_queffelec02,bayart03,helson05a,saksman_seip08,seip08}, which for $p \in [1,\infty)$ are defined to be the closure of finite Dirichlet polynomials in the norm 
\begin{equation} \label{limit p norm}
	\lim_{T \rightarrow \infty} \left( \frac{1}{2T} \int_{-T}^T
	\Abs{ \sum_{n\in \N} a_n n^{-\im t} }^p \dif t \right)^{1/p}.
\end{equation}
The spaces are usually called the Dirichlet-Hardy spaces and we denote them by $\Hp^p$. They were defined and studied in \cite{hls97} for $p=2$, and in \cite{bayart02} for other values of $p.$
The function theory of these spaces has proven difficult and merits interest. 
In fact, the most useful definition is in terms of the Hardy spaces on the infinite dimensional torus $H^P(\T^\infty)$. As may be expected, the case $p=2$ is the most tractable since \eqref{limit p norm} coincides with
\begin{equation*}
	\Big\|\sum_{n \in \N} a_n n^{-s}\Big\|_{\Hp^2} =  \Big(\sum_{n \in \N} \abs{a_n}^2\Big)^{1/2}.
\end{equation*}
This is easily verified to be a norm, and moreover,
by the Cauchy-Schwarz inequality it is seen that $\Hp^2$ defines a Hilbert space of functions analytic on the half-plane $\C_{1/2} = \set{\sigma > 1/2}$. 
This domain of analyticity is seen to be the biggest possible by considering the translates $\zeta(s + \sigma)$ of the Riemann zeta function, $\zeta(s) = \sum_{n \in \N} n^{-s}$.
F.~Bayart \cite{bayart02} observed that by a result due to B.~J.~Cole and T.~W.~Gamelin \cite{cole_gamelin86} this holds true for all $p\in [1,\infty)$.

This raises the question of the boundary behaviour of functions in the Dirichlet-Hardy spaces along the abscissa $\sigma = 1/2$.
First considered implicitly by H.~L.~Montgomery, this question was adressed in relation with the space $\Hp^2$ by H.~Hedenmalm, P.~Lindqvist and K.~Seip.
\begin{thm}[\cite{montgomery94},  \cite{hls97}]
	For $F \in \Hp^2$ and every bounded interval $I$ there exists a 
	constant $C>0$, depending only  on the length of $I$, such that 
	\begin{equation} \label{embedding}
		\lim_{\sigma \rightarrow \frac{1}{2}^+ }\int_I \Abs{F\left(\sigma + \im t \right)}^2  \dif t \leq C \norm{F}^2_{\Hp^2}.
	\end{equation}
\end{thm}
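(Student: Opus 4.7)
The plan is to reduce to Dirichlet polynomials and then run what is essentially Montgomery's mean-square argument. Since the $\Hp^2$-norm coincides with the $\ell^2$-norm of coefficients and Dirichlet polynomials are dense, it suffices to prove the bound for a finite polynomial $F(s) = \sum_{n=1}^N a_n n^{-s}$ with an absolute constant (depending only on $|I|$), and then extend by Fatou's lemma using locally uniform convergence of polynomial approximants on $\C_{1/2}$.

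Fix $\sigma > 1/2$. Expanding the square and exchanging sum and integral,
\begin{equation*}
\int_I \Abs{F(\sigma+\im t)}^2 \dif t = |I|\sum_{n=1}^N |a_n|^2 n^{-2\sigma} + \sum_{m \neq n} a_m \overline{a_n}(mn)^{-\sigma} \int_I (m/n)^{-\im t}\,\dif t.
\end{equation*}
The diagonal part is at most $|I|\,\|F\|_{\Hp^2}^2$. For the off-diagonal, the trivial estimate $\Abs{\int_I (m/n)^{-\im t}\,\dif t} \leq 2/\abs{\log(m/n)}$ reduces matters to controlling the bilinear form
\begin{equation*}
\Abs{\sum_{m \neq n} \frac{a_m \overline{a_n}(mn)^{-\sigma}}{\log(m/n)}}.
\end{equation*}

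The crucial step is the Montgomery--Vaughan sharpening of Hilbert's inequality: for real $\lambda_1 < \lambda_2 < \cdots$ and $\delta_n := \min_{m \neq n}\abs{\lambda_m - \lambda_n}$, one has
\begin{equation*}
\Abs{\sum_{m \neq n} \frac{b_m \overline{b_n}}{\lambda_m - \lambda_n}} \leq \frac{3\pi}{2}\sum_n \frac{|b_n|^2}{\delta_n}.
\end{equation*}
Applied with $\lambda_n = \log n$ (for which $\delta_n = \log(1+1/n) \asymp 1/n$) and $b_n = a_n n^{-\sigma}$, this dominates the off-diagonal sum by $C \sum_n |a_n|^2 n^{1-2\sigma}$ with $C$ an absolute constant. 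Since $\sigma > 1/2$ implies $n^{1-2\sigma} \leq 1$, this in turn is $\leq C\,\|F\|_{\Hp^2}^2$. Combining the two contributions yields
\begin{equation*}
\int_I \Abs{F(\sigma+\im t)}^2\,\dif t \leq (|I| + C)\,\|F\|_{\Hp^2}^2
\end{equation*}
uniformly in $\sigma > 1/2$, and letting $\sigma \to 1/2^+$ concludes the argument.

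The substantive obstacle is the Hilbert-type inequality of Montgomery--Vaughan: it is precisely the $1/n$ lower bound on the spacing of the frequencies $\log n$ that saves the estimate, and a naive Cauchy--Schwarz on the off-diagonal (which leads to $\sum 1/\log^2(m/n)$) diverges. The remaining steps, namely the density reduction and the passage to the limit, are routine.
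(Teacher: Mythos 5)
Your proposal is correct in substance, and it takes a genuinely different route from the paper's own self-contained proof of \eqref{embedding}. The paper's argument (Subsection \ref{case2}) is operator-theoretic: one computes $E_IE_I^\ast g = 2\pi\chi_I P_+ g + \chi_I(g\ast\psi_1)$ by writing the kernel as a convolution with $\zeta_{1+2\delta}$ and invoking the pole decomposition $\zeta(s) = (s-1)^{-1}+\psi(s)$, then concludes from the $L^2$-boundedness of the Riesz projection $P_+$. Yours is Montgomery's classical mean-square argument: expand, control the diagonal trivially, and bound the off-diagonal via the Montgomery--Vaughan version of Hilbert's inequality, exploiting the $\gtrsim 1/n$ spacing of the frequencies $\{\log n\}$. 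Both are sound; indeed the paper itself runs exactly your argument when it derives the quantitative upper frame bound $B_I$ in Lemma~\ref{frame theorem}, explicitly crediting ``Montgomery's argument'' and citing Lemma~\ref{montgomery-vaughan lemma}. The convolution proof is the one the paper builds on structurally (it is reused almost verbatim in the proof of Lemma~\ref{RI is onto}), but your approach is the natural one if explicit constants are wanted.

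One step in your writeup does not quite hold as stated. Asserting that ``the trivial estimate $\Abs{\int_I(m/n)^{-\im t}\,\dif t}\leq 2/\abs{\log(m/n)}$ reduces matters to controlling $\Abs{\sum_{m\neq n}a_m\overline{a_n}(mn)^{-\sigma}/\log(m/n)}$'' is not a valid deduction: applying the trivial estimate term-by-term produces the \emph{absolutely-valued} sum $\sum_{m\neq n}\abs{a_m}\abs{a_n}(mn)^{-\sigma}/\abs{\log(m/n)}$, which is in general \emph{not} $\bigoh(\norm{F}_{\Hp^2}^2)$ -- for $a_n\equiv 1$, $n\leq N$, this quantity grows like $N\log N$ while $\norm{F}_{\Hp^2}^2 = N$, uniformly as $\sigma\to 1/2^+$. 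Montgomery--Vaughan requires the signed denominator $\lambda_m-\lambda_n$ together with coefficients that factor as $b_m\overline{b_n}$; that antisymmetric cancellation is exactly what the absolute-value bound throws away. The correct reduction is, writing $I=(\alpha,\beta)$,
\begin{equation*}
\int_I (m/n)^{-\im t}\,\dif t \;=\; \frac{\e^{-\im\beta(\log m - \log n)} - \e^{-\im\alpha(\log m - \log n)}}{-\im(\log m - \log n)},
\end{equation*}
to split the off-diagonal sum into the two corresponding Hilbert-type bilinear forms, and to apply Montgomery--Vaughan separately to each with $b_n = a_n n^{-\sigma}\e^{-\im\beta\log n}$ (resp.\ $\e^{-\im\alpha\log n}$), noting $\abs{b_n}=\abs{a_n}n^{-\sigma}$ so that $\sum\abs{b_n}^2/\delta_n\lesssim\sum\abs{a_n}^2n^{1-2\sigma}\leq\norm{F}_{\Hp^2}^2$. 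With that repair, the remainder of your argument -- the diagonal bound, the spacing estimate $\delta_n\asymp 1/n$, and the density/limiting step -- goes through as written.
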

We consider this result to be the starting point of our investigation since it opens the door to comparing the spaces $\Hp^p$ to the classical Hardy spaces on the half-plane $\C_{1/2}$,  which are  denoted  by $H^p(\C_{1/2})$. These spaces consist of functions analytic on this half-plane and finite in the norm
\begin{equation*}
	\norm{f}_{H^p(\C_{1/2})} = \sup_{\sigma > 1/2}  \Big(  \int_\R \Abs{f \left(\sigma + \im t \right)}^p \dif t \Big)^{1/p}.
\end{equation*}
The function theory of these spaces is very rich and they are considered to be well understood.
See for instance \cite{duren70,garnett81,koosis98,martinez-avendano_rosenthal07}. 
In particular, functions in the spaces $H^p(\C_{1/2})$ have non-tangential limits almost everywhere on the boundary $\sigma= 1/2$. 
A trick, well-known to researchers in the field, is to apply the embedding \eqref{embedding} to the inequality
\begin{equation} \label{introduction trick}
	 \int_{\R} \Abs{\frac{F\left(\sigma + \im t\right)}{\sigma + \im t}}^2 \dif t
	 \lesssim
	 \sum_{k \in \Z} \frac{1}{k^2 + 1} \int_{k}^{k+1} \Abs{F\left( \sigma + \im t \right)}^2 \dif t.
\end{equation}
It follows that if $F \in \Hp^2$ then $F/s \in H^2(\C_{1/2})$. 
Hence, functions in $\Hp^2$ have non-tangential boundary limits that are locally $L^2$ integrable.
We remark that in \cite{hedenmalm_saksman03} it was shown that for functions in $\Hp^2$ the Dirichlet series converges point-wise almost everywhere on the abscissa $\sigma = 1/2$. 

Our purpose in the present paper is to investigate more deeply the singular behaviour of functions in the spaces $\Hp^p$. Especially, we are able to compare in a quantitative manner the local behaviour near the boundary $\sigma = 1/2$ of elements in $\Hp^2$ to that of functions in the standard Hardy space $H^2$.
Surprisingly, it turns out that functions in these spaces are locally almost alike.
In the next section, we state our results, which are theorems 1 through \ref{sobolev matching real parts}, and give some of the proofs. The proof of our main result, Theorem \ref{main result}, is rather long; after some preliminaries on frame theory in section \ref{section: preliminaries} it is given in sections \ref{main result section}, \ref{proof of lemma example} and \ref{proof of lemma construction}. 
Finally, section \ref{section: mccarthy spaces} is dedicated to the proof of Theorem \ref{sobolev matching real parts}.

We remark that in our proofs we shall also use techniques and arguments from frame theory, the theory of entire functions and operator theory.

\section{Results} \label{results}
We begin with some notation. For a bounded interval $I\subset \R$ we let $\C_I = \set{ s \in \C : \im (s - 1/2) \notin  \R \backslash I }$, i.e. it is the complex plane with two rays on the abscissa $\sigma = 1/2$ removed. We denote by $\mathrm{Hol}(\C_I)$ the set of functions holomorphic in $\C_I$. Note that $f(x) \lesssim g(x)$ 
is taken to mean that there exists a constant $C>0$ such that $f(x) \leq Cg(x)$ for all $x$.
We use the convention $\norm{f}_{L^2(I)}^2 = \int_I \abs{g(t)}^2 \dif t$
and stress that throughout this paper we view $L^2(I)$ as the subspace of $L^2(\R)$ that consists of functions with support in $I$.

\subsection{The case $p=2$}\label{case2}
Our main result concerns the flexibility of the boundary functions of the Dirichlet series in  the space $\Hp^2$.
We refer to the introduction for the definition.
\begin{theorem} \label{main result}
	Let $I \subset \R$ be a bounded interval. Then for every $f \in H^2(\C_{1/2})$ there exist
	$F \in \Hp^2$ 
	and $\phi \in \mathrm{Hol}(\C_I)$ with $\Re \phi(1/2 + \im t) = 0$ on $I$ such that 
	$f = F + \phi$.
	In particular,
	there exists a unique $F \in \Hp^2$ of minimal norm satisfying this. For this function the following holds:
	\begin{enumerate}
		\item There are constants $c_I, C_I >0$, independent of $f$, such that
		\begin{equation*}
			 c_I \norm{\Re f(1/2 + \im t)}_{L^2(I)}^2 \leq \norm{F}_{\Hp^2}^2 \leq C_I \norm{f}_{H^2(\C_{1/2})}^2.
		\end{equation*}
		\item Given a bounded subset $\Gamma \subset \C_I$ at a positive distance
		from $\C \backslash \C_I$ then there exists a constant $D_{\Gamma,I}>0$, independent of $f$, such that 
		\begin{equation*}
			\norm{\phi}_{L^\infty(\Gamma)}^2 \leq D_{\Gamma,I} \norm{f}^2_{H^2(\C_{1/2})}.
		\end{equation*}
	\end{enumerate}
	Moreover, given $\epsilon >0$ and $\abs{I}>1$, the smallest possible $C_I$ satisfies
	\begin{equation} \label{onto: corollary asymptotics}
	 	   \abs{I}^{(1-\epsilon)\frac{\abs{I}}{\pi} \log \pi} \lesssim C_I \lesssim \abs{I}^{(1+\epsilon)\frac{6\abs{I}}{\pi}\log 2},
	\end{equation}
	where the implicit constants depend only on $\epsilon>0$. Also, the largest possible $c_I$ satisfies
	\begin{equation*}
		\frac{1}{\abs{I} + 14} \leq c_I \leq \frac{1}{\abs{I}}.
	\end{equation*}
	Finally,
	\begin{equation*}
		\frac{1}{\pi} \leq \liminf_{\abs{I}\rightarrow 0} C_I \leq \limsup_{\abs{I} \rightarrow 0} C_I \leq \frac{2}{\pi}, 
	\end{equation*}
	and
	\begin{equation*}
	 	\frac{1}{\pi} \leq \liminf_{\abs{I}\rightarrow 0} c_I \leq	\limsup_{\abs{I} \rightarrow \infty} c_I \leq \frac{2}{\pi}.
	\end{equation*}
\end{theorem}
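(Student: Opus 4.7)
The plan is to recast the existence part as a surjectivity problem for the boundary ``real part'' map. Viewing $\Hp^2$ as a real Hilbert space, define $T \colon \Hp^2 \to L^2(I;\R)$ by $T(F)(t) = \Re F(1/2 + \im t)$ for $t \in I$. Surjectivity of $T$ yields the decomposition: given $f \in H^2(\C_{1/2})$, choose $F \in \Hp^2$ with $TF = \Re f|_I$ and set $\phi := f - F$, so that $\Re \phi = 0$ on $I$. By the Schwarz reflection principle, $\phi$, initially defined only in $\C_{1/2}$, continues holomorphically across $I$ to the opposite half-plane, giving an element of $\mathrm{Hol}(\C_I)$. The uniqueness of the minimum-norm preimage follows from the standard pseudoinverse formula $F = T^{*}(TT^{*})^{-1}(\Re f|_I)$ as soon as $TT^{*}$ is boundedly invertible.

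Everything thus reduces to upper and lower frame bounds for the image of an orthonormal basis of $\Hp^2$ under $T$, i.e.\ for the real system $\{n^{-1/2}\cos(t\log n),\ n^{-1/2}\sin(t\log n)\}_{n \in \N}$ in $L^2(I;\R)$. The upper (Bessel) bound $\|TF\|^2_{L^2(I)} \leq B_I \|F\|^2_{\Hp^2}$ is the Montgomery--Hedenmalm--Lindqvist--Seip embedding \eqref{embedding} applied to $|\Re F| \leq |F|$; through the identity $\|F\|^2_{\Hp^2} = \langle (TT^{*})^{-1} \Re f|_I,\, \Re f|_I \rangle_{L^2(I)}$ it provides the lower estimate $c_I \geq 1/B_I$ in (a), and the matching upper estimate $c_I \leq 1/|I|$ follows by testing with the constant Dirichlet polynomial $F \equiv c$. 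The lower frame bound $\|T^{*}g\|^2_{\Hp^2} \geq A_I \|g\|^2_{L^2(I)}$ is the genuine technical heart of the paper---the ``frame bound estimate''---and I would obtain it by explicitly constructing a bounded right inverse $R \colon L^2(I;\R) \to \Hp^2$, via a number-theoretic selection of indices $n$ whose logarithms sample the Fourier frequencies $2\pi k/|I|$ on $I$ densely enough to reproduce an arbitrary boundary datum with controlled Dirichlet coefficients. Combined with the trivial trace bound $\|\Re f|_I\|_{L^2(I)} \leq \|f\|_{H^2(\C_{1/2})}$, this produces the upper estimate $C_I \leq 1/A_I$ of (a).

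For part (b), a Cauchy integral estimate for $\phi$ on a contour in $\C_I$ encircling $\Gamma$ and kept at positive distance from the two removed rays gives $\|\phi\|_{L^\infty(\Gamma)} \leq D_{\Gamma,I}(\|F\|_{\Hp^2} + \|f\|_{H^2(\C_{1/2})})$, which collapses to a constant times $\|f\|_{H^2(\C_{1/2})}$ by (a). Finally, the asymptotics in \eqref{onto: corollary asymptotics} and for $c_I$ come out of tracking constants in the same frame construction: the upper bound on $C_I$ as $|I|\to\infty$ records the norm growth of $R$ (the sparse density of $\{\log n\}$ on large intervals forces one to use many indices at once, producing the $|I|^{O(|I|)}$ factor), while the matching lower bound is obtained by testing against a carefully chosen $f$ whose local behaviour is difficult to reproduce by Dirichlet polynomials of small $\Hp^2$-norm. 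For $|I| \to 0$ the operator $T$ is close to a rank-one averaging against the function $1$, and a direct analysis using the reproducing kernel $\zeta(s + 1/2 - \im t_0)$ of $\Hp^2$ at $1/2 + \im t_0$ pins $c_I$ and $C_I$ into the interval $[1/\pi, 2/\pi]$. The principal obstacle throughout, and indeed the technical backbone of the paper, is the lower frame bound: both securing its existence and establishing the sharp asymptotic dependence of $\|R\|$ on $|I|$.
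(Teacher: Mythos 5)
Your outline correctly identifies the structure the paper actually follows: recast existence as surjectivity of the boundary real-part map (the paper's $R_I\colon\ell^2(\Z^\ast)\to L^2(I)$, which is a reparametrization of your $T$), Schwarz-reflect to get the extension to $\mathrm{Hol}(\C_I)$, use the canonical dual frame to get the minimal-norm preimage, bound $\|\phi\|_{L^\infty(\Gamma)}$ via the reproducing formula, and reduce the quantitative estimates to upper and lower bounds for a frame of weighted exponentials at logarithmic frequencies. The estimate $c_I\leq 1/|I|$ from constants, and the Montgomery--Vaughan upper frame bound giving $c_I\geq 1/(|I|+d)$, are also exactly as in the paper.

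However, you have not actually proved the statement; you have located its hard kernel and then described what a proof ought to do. Two substantive gaps remain. First, the qualitative surjectivity is cheaper than you suggest: the paper does not go through a constructive right inverse at all for the existence part, but instead computes $R_IR_I^\ast g = 2\pi g + \chi_I(g\ast\phi)$ from the pole of $\zeta$, observes the second term is Hilbert--Schmidt, and applies the second stability theorem of semi-Fredholm theory together with a zero-density argument to get injectivity of $R_I^\ast$. Your proposal skips this and jumps directly to the much harder task of a quantitative lower frame bound. Second, that lower frame bound (the genuine content of \eqref{onto: corollary asymptotics}) is left entirely at the level of intention: ``I would obtain it by ... a number-theoretic selection of indices $n$ whose logarithms sample the Fourier frequencies densely enough'' is the right idea, but carrying it out in the paper requires oversampling into $L^2(-W,W)$ with $W=(1+\eta)T$, Kadec's $1/4$-theorem to perturb high harmonic frequencies onto the set $\pm\log\N$ while retaining a Riesz basis, the Boas--Bernstein interpolation formula with a damping factor $h_l$ so that the series localizes, Avdonin's lemma to control the growth of the resulting generating function $S$, and then a detailed product estimate to track the $|I|^{O(|I|)}$ constant. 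Likewise the matching lower bound on $C_I$ needs a concrete extremal function (the paper uses a product of dilated $\mathrm{sinc}$'s vanishing at $\log 2,\dots,\log[\mathrm{e}^\mu]$), not just ``a carefully chosen $f$''. Finally, for the $|I|\to 0$ asymptotics the paper does not use the reproducing kernel at all; it shows $\|\mathcal{S}_I\mathcal{S}_I^\ast - 2\pi\,\mathrm{Id}\|\to 0$ directly from the compact-perturbation identity, which gives the constants $[1/\pi,2/\pi]$ immediately. In short, the architecture matches the paper, but the load-bearing lemmas are asserted rather than established.
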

We remark that by keeping track of the constants in the proof, it is possible to show that for $\abs{I} \geq  1$, 
\begin{equation} \label{main theorem quantitative bound}
	 		D_{\Gamma,I}\lesssim C_I  \left\{ 1 + \frac{1}{\abs{I}^3} \sup_{s \in \Gamma} \int_{\R \backslash I} \Abs{\frac{s(1-s)}{{s - \frac{1}{2} - \im \tau}}}^2 \dif \tau\right\}.
\end{equation}

The heart of the qualitative part of the statement is the following. 
\begin{quote}
\emph{Given $f \in H^2(\C_{1/2})$ and a bounded interval on the abscissa $\sigma = 1/2$, we can find a function $F \in \Hp^2$ such that $\Re F = \Re f$ on this interval.}
\end{quote}

The quantitative part of our main result gains additional interest as one observes, that it can be considered as a strong measure for how quickly the well-known almost periodicity for elements in $\Hp^2$ takes place in the vertical direction. The inequality \eqref{onto: corollary asymptotics}
is obtained by establishing a lower estimate for a frame consisting of weighted exponentials at logarithmic frequencies, see Lemma \ref{frame theorem}. This is of independent interest since explicit bounds for non-trivial frames are difficult to obtain.

The proof of Theorem \ref{main result}
 is given in Sections \ref{main result section} --\ref{proof of lemma construction}.

For the readers convenience we end this subsection by giving a simple and self-contained proof of the embedding \eqref{embedding} stated in the introduction. This proof  serves as a blueprint for the argument of our main result, and we also use directly some ensuing information from the proof. Note that this proof is different than the ones given in \cite{montgomery94,hls97}.
\begin{proof}[Proof of inequality \eqref{embedding}]
Let $\chi_I$ denote the indicator function of the interval $I \subset \R$ and consider the embedding operator defined on the set of Dirichlet polynomials $\mathscr{D}$ by
\begin{equation*}
	 E_I : \sum_{n \in \N} a_n n^{-s} \in \mathscr{D} \longmapsto \chi_I(t) \sum_{n\in \N} a_n n^{-1/2 - \im t}.
\end{equation*}
The operator $E_I$ is densely defined from $\Hp^2$ to $L^2(I)$. The lemma holds if and only if $E_I$ extends to a bounded operator on all of $\Hp^2$.
We approximate $E_I$ in the strong-operator topology by operators of the type
\begin{equation} \label{intro: approximate embedding}
	 E_{I,\delta} :  \sum_{n \in \N} a_n n^{-s} \in \mathscr{D} \longmapsto \chi_I (t) \sum_{n\in \N} a_n n^{-1/2 - \delta - \im t}.
\end{equation}
For fixed $\delta>0$, the operator $E_{I,\delta}:\Hp^2\to L^2(I)$ is well-defined and bounded 
since it corresponds to restriction on an interval on the line
$\sigma=1/2+\delta .$
It follows by a straight-forward computation that
\begin{equation*}
	 E_{I,\delta} E_{I,\delta}^\ast g = \sum_{n \in \N} \frac{\hat{g}(-\log n)n^{-\im t} }{n^{1+2\delta}},
\end{equation*}
where the Fourier transform on $L^2(\R)$ is given by
\begin{equation*}
	 \hat{g}(\xi) = \int_\R g(t) \e^{-\im t \xi} \dif t.
\end{equation*}
By expanding the Fourier transforms and interchanging summation and integral signs before taking the limit (which is easily justified by considering first $g\in C_0^\infty (I)$), we get
\begin{equation*}
	 E_{I,\delta} E_{I,\delta}^\ast g = \lim_{\delta \rightarrow 0} \chi_I (g \ast \zeta_{1 + 2\delta}).
\end{equation*}
where $\ast$ denotes convolution on $\R$, the function $\zeta_{\sigma}(t) = \zeta(\sigma + \im t) =  \sum n^{-\sigma - \im t}$ is  the Riemann-zeta function and $g \in L^2(I)$ is extended to all of $\R$ by setting it equal to zero outside of $I$. Since  the zeta function satisfies  $\zeta(s) = \frac{1}{s-1} + \psi(s)$ where $\psi$ is an entire function, 
we may take the limit $\delta\to 0^+$ to obtain
	\begin{equation} \label{formulaE}
		E_IE_I^\ast g = 2\pi \chi_I P_+g +  \chi_I (g \ast \psi_1).
	\end{equation}
Here $\psi_1(t) = \psi(1 + \im t)$ and  $P_+$ denotes the Riesz projection $L^2(\R) \rightarrow H^2(\C_+)$ given by
\begin{equation} \label{onto: riesz projection}
	 P_+g(t) = \lim_{\delta\to 0^+}  \frac{1}{2\pi} \int_\R  \frac{g(\tau)}{\delta+ \im (t - \tau)} \dif \tau.
\end{equation}
Since the Riesz projection is bounded on $L^2(\R)$, the lemma now follows. (See for instance \cite[p. 128]{koosis98} for more on the Riesz projection.)
\end{proof}

\subsection{The case $p=1$.}
Before we proceed, we try to better explain the structure of the spaces $\Hp^p$ for general $p \in [1,\infty)$ by indicating the connection to the Hardy spaces on the infinite polydisk $H^p(\T^\infty)$. The idea is to identify the Dirichlet monomial $p_n^{-s}$, where $p_n$ is the $n$'th prime number, with the $n$'th variable of the infinite torus 
\begin{equation*}
	\T^\infty = \Set{ (z_1, z_2, \ldots) : z_n \in \T }.
\end{equation*}
In this way we get the formal correspondence
\begin{equation*}
	 \mathscr{B} :   \sum_{n \in \N} a_n z^{\nu_1}_1 \cdots z_k^{\nu_k} \longmapsto \sum_{n \in \N} a_n n^{-s},
\end{equation*}
where $n = p_1^{\nu_1} \cdots p_k^{\nu_k}$ is the prime number decomposition of the integer $n$. This is the Bohr identification first introduced in \cite{bohr13a}.
Since $\T^\infty$ is a compact topological group, it is straight-forward to define the spaces $L^p(\T^\infty)$ and their subspaces $H^p(\T^\infty)$. For details, the reader should consult the references \cite{hls97,bayart02}. In particular, there it is shown, using ergodic theory, that for Dirichlet polynomials we have
\begin{equation*}
	 \lim_{T \rightarrow \infty} \frac{1}{2T} \int_{-T}^T \abs{D(\im t)}^p \dif t
	 =
	 \int_{\T^\infty} \abs{\mathscr{B}^{-1}D(\chi)}^p \dif \rho (\chi).
\end{equation*}
The reference \cite{saksman_seip08} contains a simpler proof of this relation, based on Weierstrass' density theorem.
This identity implies that the two norms for $\Hp^p$ are isometric and it follows that they are invariant under vertical translations. 
We next state a corollary to our main result. 
\begin{theorem} \label{corollary in introduction}
	Let $I \subset \R$ be a bounded  interval. Then for every $f \in H^1(\C_{1/2})$ there exist
	$F \in \Hp^1$ 
	and $\phi \in \mathrm{Hol}(\C_I)$ with $\Re \phi(1/2 + \im t) = 0$ on $I$ such that 
	$f = F + \phi$.
\end{theorem}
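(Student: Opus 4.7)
The plan is to lift the $p=2$ case of Theorem \ref{main result} to $p=1$ via the classical factorization $H^1(\C_{1/2})=H^2(\C_{1/2})\cdot H^2(\C_{1/2})$. Starting from $f\in H^1(\C_{1/2})$, write $f=gh$ with $g,h\in H^2(\C_{1/2})$ by the inner--outer factorization (so that $|g|^2=|h|^2=|f|$ on the boundary).

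The key preliminary move is to fix a slightly larger bounded interval $I'$ that compactly contains $I$ in its interior, and to apply Theorem \ref{main result} with $I'$ to both $g$ and $h$. This produces
\begin{equation*}
g=G+\phi_g,\qquad h=H+\phi_h,
\end{equation*}
with $G,H\in\Hp^2$ and $\phi_g,\phi_h\in\mathrm{Hol}(\C_{I'})\subset\mathrm{Hol}(\C_I)$ whose real parts vanish on $I'$. Because $I\subset\mathrm{int}\,I'$, the set $I$ lies at positive distance from $\C\setminus\C_{I'}$, and part $(2)$ of Theorem \ref{main result} supplies the uniform bounds $\phi_g,\phi_h\in L^\infty(I)$.

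The natural candidate is $GH$, which lies in $\Hp^1$ via the Bohr lift and Cauchy--Schwarz on $H^2(\T^\infty)$. A direct computation of boundary values on $I$, using $\Re\phi_g=\Re\phi_h=0$ and $\Im G=\Im g-\Im\phi_g$, $\Im H=\Im h-\Im\phi_h$, gives the defect
\begin{equation*}
\epsilon := \Re f-\Re(GH)=-(\Im G)(\Im\phi_h)-(\Im\phi_g)(\Im H)-(\Im\phi_g)(\Im\phi_h)\quad\text{on } I.
\end{equation*}
The embedding \eqref{embedding} places $\Im G,\Im H\in L^2(I)$, and combined with the $L^\infty(I)$ control of $\phi_g,\phi_h$ one concludes that $\epsilon\in L^2(I)$. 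Extending $\epsilon$ by zero to $\R$ and forming its Poisson/Cauchy extension yields $f_1\in H^2(\C_{1/2})$ with $\Re f_1=\epsilon$ on $I$; applying Theorem \ref{main result} now with the original interval $I$ to $f_1$ produces $F_1\in\Hp^2$ with $\Re F_1=\epsilon$ on $I$. Set $F:=GH+F_1\in\Hp^1$ (the second summand lying in $\Hp^2\subset\Hp^1$). By construction $\Re F=\Re f$ on $I$, so $\phi:=f-F$ is holomorphic in $\C_{1/2}$ with purely imaginary non-tangential boundary values on $I$; Schwarz reflection across $I$, applied in its $L^1_{\loc}$ boundary-value form, extends $\phi$ to an element of $\mathrm{Hol}(\C_I)$ with $\Re\phi=0$ on $I$, completing the decomposition $f=F+\phi$.

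The principal obstacle is the control of the correction $\epsilon$: without $L^\infty$ bounds on $\phi_g,\phi_h$ one would only obtain $\epsilon\in L^1(I)$, and the already proved $p=2$ statement could not be invoked to cancel the defect. Passing to the slightly enlarged interval $I'$ and invoking part $(2)$ of Theorem \ref{main result} is the trick that keeps the whole argument inside the $L^2$ range and reduces everything to the previously established $p=2$ case.
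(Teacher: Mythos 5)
Your proposal is correct and follows essentially the same route as the paper's own proof: factor $f=gh$ with $g=JO^{1/2}$, $h=O^{1/2}\in H^2(\C_{1/2})$, apply Theorem~\ref{main result} on a slightly enlarged interval to each factor, take $GH\in\Hp^1$ as the first approximation, use the $L^\infty$ control of $\phi_g,\phi_h$ from part (2) to place the real-part defect in $L^2(I)$, form its Cauchy/Poisson extension in $H^2(\C_{1/2})$, and correct with a further application of Theorem~\ref{main result}. The only cosmetic difference is that you express the defect through $\Im G,\Im H$ (using the embedding \eqref{embedding}) whereas the paper writes $f-GH=g\phi_h+h\phi_g-\phi_g\phi_h$ and controls $g,h$ directly in $L^2$; both bookkeeping choices lead to the same conclusion.
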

\begin{proof}
	We may assume that $I = [-T,T]$. Fix $\epsilon >0$ and 
	denote $I_\epsilon := [-T-\epsilon, T+\epsilon]$.
	For $f \in H^1(\C_{1/2})$ let  $f = JO$ be its unique factorisation into an inner function $J$ and an outer function $O$. Set $g = JO^{1/2}$
	and $h = O^{1/2}$. We have both $g, h \in H^2$, so  by Theorem \ref{main result} there exists functions $G, H \in \Hp^2$
	and $\phi_g,\phi_h \in \mathrm{Hol}(\C_{I_\epsilon})$ such that
	$g = G + \phi_g$ and $h = H + \phi_h$. Using this,
	\begin{equation*}
		f - GH =  g\phi_h +h\phi_g   - \phi_g \phi_h.
	\end{equation*}
	In particular $v = \chi_I \Re(f - GH) \in L^2(I)$. Let 
	\begin{equation*}
		\widetilde{v}(s) =  \frac{1}{2\pi \im } \int_I  v(\tau) \frac{1}{s - \im \tau - \frac{1}{2}} \dif \tau.
	\end{equation*}
	Then $\widetilde{v} \in H^2(\C_{1/2})$ and so we may find $V \in \Hp^2$ and $\phi_v \in \mathrm{Hol}(\C_I)$ such that $\widetilde{v} = V + \phi_v$.
	Let $F = GH + V$. It now follows that $F \in \Hp^1$ and moreover that $\Re(f - F)(1/2 + \im t) = 0$ on $I$.
\end{proof}
It is evident that one may use the quantitative bound of Theorem \ref{main result} to compute an explicit upper bound for $\norm{f-F}_{L^\infty(\Gamma)}$ in Theorem \ref{corollary in introduction} in the spirit of formula \eqref{main theorem quantitative bound}.

\subsection{General $p \in [1,\infty)$.} 
We refer to the previous subsection for a description of the spaces $\Hp^p$ in the general case. 
Our next results have a slightly different flavour than the previous ones. To motivate them,
we mention the role Dirichlet series play in number theory. In particular, the questions on the local embedding properties into $L^p$ spaces of intervals on the abscissa $\sigma=1/2$ appear to be in some sense analogues of certain deep
conjectures in analytic number theory, known as Montgomery's conjectures (see e.g. \cite[chapter 7]{montgomery94} and Bourgain \cite{bourgain00}).
For more information, we refer to
 \cite[section 3]{saksman_seip08}, which especially discusses in detail the open question that has become known as the Embedding problem for $\Hp^p$: given $p \in [1,\infty)$,  does there exist a constant $C>0$ such that 
\begin{equation} \label{p embedding}
		\lim_{\sigma \rightarrow \frac{1}{2}^+ }\int_I \Abs{F\left(\sigma + \im t \right)}^p  \dif t \leq C \norm{F}^p_{\Hp^p}
\end{equation}
holds?
Curiously enough,  this inequality is presently known only for
exponents $p \in 2\N$.

In Theorem \ref{equivalent conditions for embedding} we present two more equivalent statements which are in terms of Carleson measures. Before we state it, we give some additional definitions and prove a preliminary result, Theorem \ref{carleson measure lemma}.
Let $\mu$ be a positive measure defined on $\C_{1/2}$. 
We say that $\mu$ is a Carleson measure for the space $H^p(\C_{1/2})$ if there exists a constant $C>0$ such that for every
$f \in H^p(\C_{1/2})$ we have
\begin{equation} \label{definition: carleson measure}
	 \int_{\C_{1/2}} \abs{f(s)}^p \dif \mu(s) \leq C \norm{f}^p_{H^p(\C_{1/2})}.
\end{equation}
We call a measure with bounded support a local measure.
Following \cite{olsen_seip08}, we say that a positive measure $\mu$ is a Carleson measure for the space $\Hp^p$ if there exists a constant $C>0$ such that for all $F \in \Hp^p$ it holds that
\begin{equation*}
	 \int_{\C_{1/2}} \abs{F(s)}^p \dif \mu(s) \leq C \norm{F}^p_{\Hp^p}.
\end{equation*}
With this definition, the embedding \eqref{embedding} implies that any Carleson measure
for the space $H^2(\C_{1/2})$ with bounded support is also a Carleson measure for the 
space $\Hp^2$. By a straight-forward argument, we are able to establish the following result.
\begin{theorem}\label{carleson measure lemma}
	Let $p \in [1,\infty)$ and assume that $\mu$ is a positive measure on $\C_{1/2}$. If $\mu$ is a Carleson measure for $\Hp^p$  then $\mu$ is a Carleson measure for $H^p(\C_{1/2})$.
\end{theorem}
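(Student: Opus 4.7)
The plan is to test the $\Hp^p$-Carleson hypothesis against a specific family of Dirichlet series and extract the classical Carleson box condition for $H^p(\C_{1/2})$. First, testing against the unit Dirichlet polynomial $F \equiv 1 \in \Hp^p$ yields $\mu(\C_{1/2}) \leq C$; so $\mu$ is a finite measure. Second, for each $s_0 = \sigma_0 + \im t_0 \in \C_{1/2}$, consider the shifted zeta kernel
\[
F_{s_0}(s) = \zeta(s + \bar{s_0}) = \sum_{n \in \N} n^{-\bar{s_0}}\, n^{-s},
\]
which lies in $\Hp^p$ (its Bohr lift is the independent product $\prod_k (1 - p_k^{-\bar{s_0}} z_k)^{-1}$, whose $L^p(\T^\infty)$-norm is finite for $\sigma_0 > 1/2$ and controllable by an Euler-product/Mertens estimate). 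Using the standard decomposition $\zeta(w) = 1/(w-1) + \psi(w)$ with $\psi$ entire and bounded on $\set{\Re w > 1}$, we split $F_{s_0}(s) = K_{s_0}(s) + \psi(s + \bar{s_0})$, where $K_{s_0}(s) = 1/(s + \bar{s_0} - 1)$ is the $H^p(\C_{1/2})$ Cauchy kernel. Applying the hypothesis to $F_{s_0}$ and absorbing the $\psi$-contribution via the finiteness step yields
\[
\int_{\C_{1/2}} \abs{K_{s_0}(s)}^p\, d\mu(s) \lesssim \|F_{s_0}\|^p_{\Hp^p} + \mu(\C_{1/2}),
\]
uniformly in $s_0 \in \C_{1/2}$. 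For a bounded interval $I \subset \R$, placing $s_0$ at the top-center of the Carleson box $Q(I) = \set{s \in \C_{1/2} : \sigma < 1/2 + \abs{I},\ \im s \in I}$ ensures $\abs{K_{s_0}(s)} \gtrsim \abs{I}^{-1}$ on $Q(I)$; comparing with the Bohr-lift bound on $\|F_{s_0}\|^p_{\Hp^p}$ and the $H^p$ Cauchy-kernel norm then produces the box condition $\mu(Q(I)) \lesssim \abs{I}$, characterizing $\mu$ as a Carleson measure for $H^p(\C_{1/2})$.

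The argument is cleanest at $p = 2$, where $F_{s_0}$ is literally the $\Hp^2$-reproducing kernel with $\|F_{s_0}\|^2_{\Hp^2} = \zeta(2\sigma_0) \sim (2\sigma_0 - 1)^{-1}$, matching $\|K_{s_0}\|^2_{H^2(\C_{1/2})}$ exactly. The main obstacle is to calibrate the match at other values of $p$: a direct Bohr-lift computation gives $\|F_{s_0}\|_{\Hp^p} \sim (2\sigma_0 - 1)^{-p/4}$, whose scaling does not align with $\|K_{s_0}\|_{H^p(\C_{1/2})} \sim (2\sigma_0 - 1)^{1/p - 1}$. One way around this is to exploit the fact that the classical Carleson box condition is $p$-independent and to funnel all $p$ into the clean $p = 2$ case via transfer identities such as $\|G^2\|_{\Hp^1} = \|G\|^2_{\Hp^2}$, which show that the $\Hp^1$-Carleson hypothesis implies the $\Hp^2$-Carleson hypothesis; a similar factorization or interpolation step should handle the remaining values of $p$.
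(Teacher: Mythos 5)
Your approach at $p=2$ is the paper's approach: test the $\Hp^p$-Carleson hypothesis against a shifted zeta kernel, split $\zeta(w)=1/(w-1)+\psi(w)$, place $s_0$ at the center of the right edge of the box $Q$, extract $\mu(Q)\lesssim 2\sigma_0-1$, use $1\in\Hp^p$ to handle large boxes, and finish with Carleson's geometric characterization (Lemma \ref{carleson geometric characterisation}). For $p=2$ your proof is complete.

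For $p\neq 2$ there is a genuine gap, one you yourself flag. Testing against $F_{s_0}=\zeta(\cdot+\overline{s_0})$ gives an $\Hp^p$-norm whose growth as $\sigma_0\to 1/2^+$ does not match the $(2\sigma_0-1)^{-1}$ scaling needed to cancel the kernel's blow-up on $Q$, so the box condition does not fall out. The paper's key device, which is absent from your proposal, is to test against $\zeta^{2/p}_{s_0}$ rather than $\zeta_{s_0}$. Its Bohr lift is $\prod_n (1-p_n^{-\overline{s_0}}z_n)^{-2/p}$, and since $\abs{(1-az)^{-2/p}}^p = \abs{1-az}^{-2}$ the factorwise $L^p(\T)$ integral reduces to the $L^2$ computation, giving $\norm{\zeta^{2/p}_{s_0}}_{\Hp^p}^p=\zeta(2\sigma_0)$ exactly for every $p$; moreover $\abs{\zeta^{2/p}_{s_0}(s)}^p=\abs{\zeta_{s_0}(s)}^2$, so the lower bound on $Q$ is literally the $p=2$ estimate again. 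This one substitution dissolves the calibration problem entirely.

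Your proposed workaround — reduce to $p=2$ via identities like $\norm{G^2}_{\Hp^1}=\norm{G}_{\Hp^2}^2$ — does establish that $\Hp^1$-Carleson implies $\Hp^2$-Carleson, which handles $p=1$. But powering only transfers $\Hp^p$-Carleson to $\Hp^{kp}$-Carleson for integers $k\geq 1$, so for $p=3$ (say) you reach $\Hp^6,\Hp^9,\ldots$, never $\Hp^2$; and no factorization into $H^{2}\times H^{2}$ style pieces is available at the level of Dirichlet series in $\Hp^p$ for general $p$. "Interpolation" is not straightforward here either, since the hypothesis is a single fixed $p$, not a pair of endpoints. So the step "a similar factorization or interpolation step should handle the remaining values of $p$" is not actually a proof, and filling it appears to require precisely the $\zeta^{2/p}$ idea you are missing.
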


The proof relies on Carleson's geometric characterisation of the Carleson measures \cite{carleson62} for the spaces $H^p(\C_{1/2})$. 
We call a subset $Q \subset \C_{1/2}$ a Carleson square if it is a square with one side on the abscissa $\sigma = 1/2$.
\begin{lemma}[Carleson]  \label{carleson geometric characterisation}
	Let $p>0$. A positive measure $\mu$ on $\C_{1/2}$ is a Carleson measure for the space $H^p(\C_{1/2})$ if and only if there is a constant $C>0$ such that for all Carleson squares $Q$ we have $\mu(Q) \leq C \abs{Q}$.
\end{lemma}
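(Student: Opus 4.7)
The plan is to establish both implications of the stated equivalence. For necessity, I will test the Carleson measure inequality against a family of explicit $H^p(\C_{1/2})$ functions concentrated at a given Carleson square. For sufficiency, I will reduce the estimate to the boundedness of the non-tangential maximal function on the boundary $\sigma = 1/2$ via a level-set / covering argument.

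For the ``only if'' direction, fix a Carleson square $Q \subset \C_{1/2}$ of side $\ell$ with base centered at $\tfrac{1}{2} + \im t_0$ on the abscissa, and test \eqref{definition: carleson measure} against
\begin{equation*}
	f_Q(s) = \bigl(s + \ell - \tfrac{1}{2} - \im t_0\bigr)^{-2/p}.
\end{equation*}
A direct computation shows that $|f_Q(s)|^p \asymp \ell^{-2}$ uniformly for $s \in Q$, while
\begin{equation*}
	\|f_Q\|_{H^p(\C_{1/2})}^p = \sup_{\sigma > 1/2} \int_\R \frac{\dif t}{(\sigma - \tfrac{1}{2} + \ell)^2 + (t - t_0)^2} = \frac{\pi}{\ell}.
\end{equation*}
Inserting these into \eqref{definition: carleson measure} yields $\ell^{-2}\mu(Q) \lesssim \ell^{-1}$, i.e.\ $\mu(Q) \lesssim \ell = |Q|$.

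For the ``if'' direction, let $f \in H^p(\C_{1/2})$ and denote by
\begin{equation*}
	f^*(t) = \sup_{s \in \Gamma(t)} |f(s)|, \qquad t \in \R,
\end{equation*}
the non-tangential maximal function along standard cones $\Gamma(t)$ with vertex at $\tfrac{1}{2} + \im t$. Classical Hardy-space theory furnishes $\|f^*\|_{L^p(\R)} \lesssim \|f\|_{H^p(\C_{1/2})}$, so the desired bound will follow from the level-set estimate
\begin{equation*}
	\mu\bigl(\{s \in \C_{1/2} : |f(s)| > \lambda\}\bigr) \lesssim m\bigl(\{t \in \R : f^*(t) > \lambda\}\bigr) \qquad (\lambda > 0),
\end{equation*}
with implicit constant depending only on the constant $C$ in the hypothesis, via Cavalieri:
\begin{equation*}
	\int_{\C_{1/2}} |f|^p \dif \mu = p \int_0^\infty \lambda^{p-1} \mu(\{|f|>\lambda\}) \dif \lambda \lesssim \int_\R (f^*)^p \dif m \lesssim \|f\|_{H^p(\C_{1/2})}^p.
\end{equation*}

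The main obstacle is the level-set inequality itself, which is the geometric core of Carleson's original proof. I would establish it by a Whitney-type covering of the open set $O_\lambda = \{|f| > \lambda\}$ by Carleson squares $\{Q_j\}$ whose union contains $O_\lambda$ and whose bases cover, with bounded overlap, the boundary shadow $S_\lambda = \{t \in \R : \Gamma(t) \cap O_\lambda \neq \emptyset\}$. Since every $s \in O_\lambda$ lies in some cone $\Gamma(t)$ with $t \in S_\lambda$, one has $S_\lambda \subset \{f^* > \lambda\}$, and the hypothesis $\mu(Q_j) \leq C|Q_j|$ then gives
\begin{equation*}
	\mu(O_\lambda) \leq \sum_j \mu(Q_j) \leq C \sum_j |Q_j| \lesssim m(S_\lambda) \leq m(\{f^* > \lambda\}),
\end{equation*}
as required. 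The delicate point is the construction of the Whitney decomposition so as to control both the containment $O_\lambda \subset \bigcup_j Q_j$ and the bounded-overlap of the base intervals on $\sigma = 1/2$.
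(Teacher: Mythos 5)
The paper itself does not prove this lemma: it is quoted as Carleson's classical characterisation and attributed to \cite{carleson62}, so the comparison here is with the standard argument rather than with anything in the text. Your proposal is exactly that standard argument, and its two halves are in good shape at different levels of completeness. The necessity half is complete and correct: $f_Q(s)=(s+\ell-\tfrac12-\im t_0)^{-2/p}$ is analytic on $\C_{1/2}$ for every $p>0$ (principal branch on the right half-plane), the computation $\|f_Q\|_{H^p}^p=\pi/\ell$ and $|f_Q|^p\asymp\ell^{-2}$ on $Q$ is right, and it yields $\mu(Q)\lesssim|Q|$ with a constant depending only on the Carleson constant -- which matters, since the paper's remark after the lemma (comparability of constants) is used later in the proof of Theorem \ref{equivalent conditions for embedding}. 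Citing the nontangential maximal estimate $\|f^*\|_{L^p}\lesssim\|f\|_{H^p}$ for all $p>0$ is also legitimate classical Hardy-space theory.

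The one place you should tighten is the covering step, where the logic as written points the wrong way: if the bases of the $Q_j$ merely \emph{cover} $S_\lambda$ with bounded overlap, you get a lower bound $\sum_j|Q_j|\gtrsim m(S_\lambda)$, not the upper bound $\sum_j|Q_j|\lesssim m(\{f^*>\lambda\})$ that your chain of inequalities needs; for that you need the bases to be (essentially) \emph{contained} in $\{f^*>\lambda\}$ with bounded overlap. The standard construction does this and is simpler than a Whitney decomposition: write the open set $\{f^*>\lambda\}\subset\R$ as a disjoint union of open intervals $I_j$ and let $Q_j$ be the Carleson square with base $I_j$. If $|f(s)|>\lambda$ at $s=\sigma+\im t$ with height $h=\sigma-\tfrac12$, then $s\in\Gamma(t')$ for every $t'$ with $|t'-t|<\alpha h$ ($\alpha$ the aperture), so the interval $(t-\alpha h,t+\alpha h)$ lies in $\{f^*>\lambda\}$, hence in a single $I_j$ with $2\alpha h\le|I_j|$; thus $s$ lies in $Q_j$ up to a fixed dilation depending only on $\alpha$, and dilated Carleson squares are again Carleson squares, so the hypothesis still applies. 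Disjointness of the $I_j$ then gives $\sum_j|Q_j|=m(\{f^*>\lambda\})$ (the case of an unbounded component being trivial), which is precisely the level-set estimate you want, with constants depending only on $C$ and $\alpha$. With that substitution your proof is complete.
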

We make an important remark. The constant $C$ of Carleson's characterisation is comparable to the best constant of the inequality we used to define the Carleson measures.

\begin{proof}[Proof of Theorem \ref{carleson measure lemma}]
	Assume that $\mu$ is a Carleson measure for $\Hp^p$ with constant $C>0$. 
	Let $Q$ be a small Carleson box in $\C_{1/2}$. 
	Let $s_0$ be the mid-point of the right edge of the box. 
	Next, we verify that $\zeta^{2/p}_{s_0}(s) = \zeta^{2/p}(s + \overline{s_0}) \in \Hp^p$
	and compute the norm.
	Consider the function
	\begin{equation*}
	 	F(z_1, \cdots) = \prod \left( \frac{1}{1 - p_n^{-\overline{s_0}}z_n} \right)^{2/p}
	\end{equation*}
	on the infinite dimensional torus. Both checking its norm and computing its Fourier coefficients is straight-forward since the evaluation against the measure Haar measure on $\T^\infty$ splits over the coordinates. In particular,
	\begin{equation*}
		\norm{F}_{L^p(\T^\infty)}^p
		=
	 	\prod_{n \in \N} \Norm{\left( \frac{1}{1 - p_n^{-\overline{s_0}}z_n} \right)^{2/p}}_{L^p(\T)}^p 
		= \prod_{n \in \N} \left( \frac{1}{1 - p_n^{-2\sigma_0}} \right) = \zeta(2 \sigma_0).
	\end{equation*}
	In this way it follows that $F \in H^p(\T^\infty)$, and so by the Bohr identification, we obtain $\mathscr{B} F = \zeta^{2/p}_{s_0} \in \Hp^p$ with $\norm{\zeta_{s_0}^{2/p}}_{\Hp^p}^p = \norm{\zeta_{s_0}}_{\Hp^2}^2 =  \zeta(2\sigma_0)$.

	Next, we combine this with the fact that $\mu$ is a Carleson measure for $\Hp^p$ to get
	\begin{equation} \label{carleson measure lemma equation 1}
		\int_Q \frac{\abs{\zeta_{s_0}^{2/p}(s)}^p}{\norm{\zeta_{s_0}}^2_{\Hp^2}} \dif \mu
		\leq C \frac{1}{\norm{\zeta_{s_0}}^2_{\Hp^2}} \norm{\zeta_{s_0}^{2/p}}_{\Hp^p}^p = C.
	\end{equation}
	On the other hand, by the formula $\zeta(s) = (s-1)^{1} + \psi(s)$ for the Riemann zeta function, where $\psi$ is an entire function, it follows that 
	$\zeta(2\sigma_0)^{-1}= \norm{\zeta_{s_0}}^{-2}_2  =  ( 2\sigma_0-1)(1 + \Littleoh{1})$ as $\sigma_0 \rightarrow 1/2$. So, for $s_0$ close to the abscissa $\sigma= 1/2$, the left hand side of (\ref{carleson measure lemma equation 1}) is greater than some constant times
	\begin{equation} \label{inequality abcd}
		(2\sigma_0-1)\int_Q \Abs{\zeta(s + \overline{s_0})}^2\dif \mu 
		\geq
		\frac{2\sigma_0-1}{2} \int_Q \left(  \Abs{\frac{1}{s + \overline{s_0} -1} }^2 - 2\Abs{\psi(s+\sigma_0)}^2\right)\dif \mu.
	\end{equation}
	Since $\psi(s) = \Bigoh{1}$ for $s$ in a bounded set, it follows by geometric considerations that 
	$\abs{s + \overline{s_0} -1}^2 \leq (5/4) (1 - 2\sigma_0)^2$ for $s \in Q$. Hence, the expressions in (\ref{inequality abcd}) are greater than
$({2}/{5}) ({2\sigma_0-1})^{-1} \mu(Q) + \Bigoh{2\sigma_0-1}$.
	It follows that there is some constant $D>0$  such that for 
	any Carleson box with $\sigma_0 < 1$ we have
	\begin{equation*}
		\mu(Q) \leq  D( 2 \sigma_0-1).
	\end{equation*}
	
	We verify that this implies that $\mu$ is a Carleson measure for $\Hp^p$.
	Since $1 \in \Hp^p$, it follows that $\mu(\C_{1/2}) \leq C$. So for any Carleson box $Q_2$ with sides   $\sigma_0 \geq 1$, it follows that
$\mu(Q_2) \leq C \leq  {C}  (2\sigma_0 - 1)$.
	Hence, by Carleson's characterisation of Carleson measures (Lemma \ref{carleson geometric characterisation}), $\mu$ is a Carleson measure 
	for the spaces $H^p(\C_{1/2})$.
\end{proof}		
The next theorem says that the converse of this theorem is equivalent to the truth of the Embedding conjecture, i.e. the validity of the inequality \eqref{p embedding} for general $p \in [1,\infty)$.
\begin{theorem} \label{equivalent conditions for embedding}
	Let $p \in [1,\infty)$. Then the following statements are equivalent.
	\begin{itemize}
		\item[(a)] For every bounded interval $I\subset \R$ there exists a constant $C = C(\abs{I})>0$ such that for all finite sequences $(a_n)$ of complex numbers
	it holds that
	\begin{equation*}
		 \int_I \Abs{\sum a_n n^{-\frac{1}{2} - \im t}}^p \dif t \leq C \Norm{\sum a_n n^{-s}}^p_{\Hp^p}.
	\end{equation*}
	\item[(b)] Every local Carleson measure for $H^p(\C_{1/2})$ is also a Carleson measure
	for $\Hp^p$.
	\item[(c)] There exists a constant $D>0$ such that every local Carleson measure for $H^p(\C_{1/2})$ of the form
	\begin{equation*}
		\mu_S = \sum \delta_{s_n} (2 \sigma_n - 1),
	\end{equation*}
	with constant $C>0$,
	is also a Carleson measure for $\Hp^p$ with
	\begin{equation*}
		\int \abs{F(s)}^p \dif \mu_S(s) \leq  CD \norm{F}^p_{\Hp^p} \quad \forall F \in \Hp^p.
	\end{equation*}
	\end{itemize}
\end{theorem}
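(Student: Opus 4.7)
I plan to establish the cycle $(a) \Rightarrow (b) \Rightarrow (c) \Rightarrow (a)$.

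For $(a) \Rightarrow (b)$, my approach is to convert a given $F \in \Hp^p$ into a genuine $H^p(\C_{1/2})$-function by multiplying by $1/(s+a)^2$ for suitable $a > 0$. Applying (a) to each unit subinterval and summing with weights $(1+k^2)^{-p}$, exploiting the vertical translation invariance of $\norm{\cdot}_{\Hp^p}$, yields the weighted boundary estimate
\[
\int_{\R} \frac{\abs{F(1/2+\im t)}^p}{(1+t^2)^p}\,\dif t \lesssim \norm{F}_{\Hp^p}^p.
\]
Since horizontal shifts $F(\cdot+\rho)$ with $\rho>0$ have $\Hp^p$-norm at most $\norm{F}_{\Hp^p}$, the boundary estimate promotes to $F(s)/(s+a)^2 \in H^p(\C_{1/2})$ with controlled norm. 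Given a local Carleson measure $\mu$ for $H^p(\C_{1/2})$ supported in $\{|s|\le M\}$, the measure $\dif\nu(s) = \abs{s+a}^{2p}\,\dif\mu(s)$ is also a local Carleson measure for $H^p(\C_{1/2})$ whose Carleson constant is at most $(M+a)^{2p}$ times that of $\mu$, and
\[
\int \abs{F}^p\,\dif\mu = \int \Abs{\frac{F}{(s+a)^2}}^p\,\dif\nu \lesssim \norm{F}_{\Hp^p}^p
\]
by Carleson's theorem for $H^p(\C_{1/2})$, giving (b).

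For $(c) \Rightarrow (a)$, fix a bounded interval $I \subset \R$ and $\epsilon > 0$, and form the discrete measure
\[
\mu_\epsilon = \sum_k \delta_{s_k}(2\sigma_k - 1) = (2\epsilon)\sum_k \delta_{s_k},
\]
where $s_k = 1/2 + \epsilon + \im t_k$ and $\{t_k\}$ is an evenly-spaced mesh of step $2\epsilon$ in $I$. This is of the form required by (c), and a direct count of points in Carleson squares shows its Carleson constant for $H^p(\C_{1/2})$ is bounded by a universal constant independent of $\epsilon$. Applying (c) yields $(2\epsilon)\sum_k \abs{F(s_k)}^p \le D'\norm{F}_{\Hp^p}^p$ with $D'$ independent of $\epsilon$. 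The left side is a Riemann sum converging to $\int_I \abs{F(1/2+\epsilon+\im t)}^p\,\dif t$ as the mesh is refined; for a Dirichlet polynomial $F$, continuity at the boundary lets us pass $\epsilon \to 0^+$ to recover (a).

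For $(b) \Rightarrow (c)$, the task is to upgrade the qualitative conclusion of (b) to a bound uniform in the $H^p(\C_{1/2})$-Carleson constant of $\mu_S$. My plan is a uniform boundedness argument on the cone of measures of the form $\sum \delta_{s_n}(2\sigma_n - 1)$ of Carleson constant at most one: the $p$-homogeneous seminorms $F \mapsto (\int \abs{F}^p\,\dif\mu_S)^{1/p}$ on $\Hp^p$ are continuous for each such $\mu_S$ by (b), and the linearity of the measure dependence together with $p$-homogeneity allow a reduction to Banach--Steinhaus for the associated restriction operators $\Hp^p \to L^p(\mu_S)$. This implication is the main obstacle, as the target spaces $L^p(\mu_S)$ vary with the parameter and the functionals in question are $p$-homogeneous rather than linear, requiring a careful adaptation of the UBP machinery in order to extract the universal constant $D$.
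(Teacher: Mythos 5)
Your arguments for $(a) \Rightarrow (b)$ and $(c) \Rightarrow (a)$ are sound and closely track the paper's approach (the paper's $(c) \Rightarrow (a)$ averages the discrete measures over a small parameter rather than invoking Riemann-sum convergence, which is a minor cosmetic difference, and the paper's $(a) \Rightarrow (b)$ is the same weighted-interval trick you spell out, just compressed to a remark about $(1/s)\Hp^p \subset H^p$). The genuine problem is $(b) \Rightarrow (c)$, where you stop at an announced plan rather than a proof, and the plan as stated does not go through.

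There are two obstructions to your UBP scheme, and you name the first (varying target spaces $L^p(\mu_S)$, nonlinear $p$-homogeneous functionals) but not the second, which is decisive: the uniform boundedness principle requires the \emph{pointwise} bound $\sup_{\mu_S}\int_\Gamma \abs{F}^p \, \dif\mu_S < \infty$ for each fixed $F\in\Hp^p$, where the supremum is over local $H^p$-Carleson measures of unit constant supported in $\Gamma$. Hypothesis $(b)$ gives you, for each single $\mu_S$, some constant $C(\mu_S)$ with $\int\abs{F}^p\,\dif\mu_S \le C(\mu_S)\norm{F}^p_{\Hp^p}$, but says nothing about the supremum over the family. Since $\Gamma$ may touch the line $\sigma = 1/2$ and you do not yet know that $F$ is controlled near that line (indeed that is exactly what the theorem is trying to establish), verifying this pointwise bound is essentially as hard as the conclusion, so the UBP route is circular.

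The paper's device is different and sidesteps both difficulties: it introduces the spaces $\mathrm{CM}^p_\Gamma(X)$ of signed measures supported in $\Gamma$ normed by $\norm{\mu}_{\mathrm{CM}^p_\Gamma(X)} = \sup_{\norm{f}_X=1}\int_\Gamma\abs{f}^p\,\dif\abs{\mu}$, proves these are Banach spaces (Lemma~\ref{carleson banach}, using that $1\in\Hp^p$ and $s^{-2}\in H^p$ to control total variation by the $\mathrm{CM}$ norm), and applies the \emph{closed graph theorem} to the identity inclusion $\mathcal{I}\colon \mathrm{CM}^p_\Gamma(H^p) \to \mathrm{CM}^p_\Gamma(\Hp^p)$. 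Hypothesis $(b)$ ensures $\mathcal{I}$ is well defined; closedness of its graph follows because convergence in either $\mathrm{CM}$ norm forces convergence in total variation and hence agreement of limits as measures. This is a genuinely different functional-analytic lever than UBP: it is applied to one linear map between two fixed Banach spaces, and it requires no pointwise verification whatsoever. To complete your proof you would need to reproduce this construction (or find another way to extract a uniform $D$), since your outline as written leaves the crucial implication open.
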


We give a lemma on which the implication $(b) \Rightarrow (c)$ hinges.
To do this we need to make some definitions.
	Let $\Gamma$
	be some bounded subset of $\C_{1/2}$ and let $M(\Gamma)$ denote the complex
	measures with support in the closure of $\Gamma$. This forms a Banach space
	under the norm $\norm{\mu} = \int_\Gamma \dif \abs{\mu}$.
	For fixed $p \in [1,\infty)$ let $X$ denote either of the spaces $H^p(\C_{1/2})$ or $\Hp^p$.
	By
	 $\mathrm{CM}^p_\Gamma(X)$
	 we denote the space of all signed measures supported on a bounded subset $\Gamma \subset \C_{1/2}$ 
	equipped
	with the norm
	\begin{equation*}
		\norm{\mu}_{\mathrm{CM}^p_\Gamma(X)} = \sup_{\norm{f}_X = 1} \int_\Gamma \abs{f(s)}^p \dif \abs{\mu(s)}.
	\end{equation*}
	Here $\nu = \abs{\mu}$ denotes the total variation measure of $\mu$ (see \cite[p. 93]{folland99}).
\begin{lemma} \label{carleson banach}
	For fixed $p \in [1,\infty)$, let $X$ denote either of the spaces $H^p(\C_{1/2})$ or $\Hp^p$.
	Then
	the space $\mathrm{CM}^p_\Gamma(X)$ 
	is a Banach space.
\end{lemma}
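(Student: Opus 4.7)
The plan is to verify the norm axioms and then deduce completeness by reducing to the classical Banach space $M(\Gamma)$ of finite complex Borel measures on $\overline{\Gamma}$ equipped with the total variation norm $\norm{\mu}_{M(\Gamma)} = \abs{\mu}(\Gamma)$. The key ingredient I would use throughout is a single fixed test function $f_0 \in X$ with the property that $\abs{f_0} \geq m > 0$ on $\Gamma$ for some $m$. For $X = \Hp^p$ one may take $f_0 \equiv 1$; for $X = H^p(\C_{1/2})$ one may take $f_0(s) = (s + 1/2)^{-2}$, which belongs to $H^p(\C_{1/2})$ for every $p \geq 1$ and, since $\Gamma$ is bounded, is bounded below on $\Gamma$.

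With such an $f_0$ in hand, the inequality
\begin{equation*}
m^p \abs{\mu}(\Gamma) \leq \int_\Gamma \abs{f_0}^p \dif \abs{\mu} \leq \norm{f_0}_X^p \, \norm{\mu}_{\mathrm{CM}^p_\Gamma(X)}
\end{equation*}
gives positive definiteness of $\norm{\cdot}_{\mathrm{CM}^p_\Gamma(X)}$. Homogeneity is clear, and the triangle inequality follows from $\abs{\mu + \nu} \leq \abs{\mu} + \abs{\nu}$ integrated against $\abs{f}^p \geq 0$.

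For completeness, I would take a Cauchy sequence $\seq{\mu_n}$ in $\mathrm{CM}^p_\Gamma(X)$. Applying the above inequality to $\mu_n - \mu_k$ shows that $\seq{\mu_n}$ is Cauchy in $M(\Gamma)$, which is complete, so $\mu_n \to \mu$ in total variation for some $\mu \in M(\Gamma)$. The standard estimate $\abs{\abs{\mu}(E) - \abs{\nu}(E)} \leq \abs{\mu - \nu}(E)$, valid for every measurable $E$, then gives $\abs{\mu_n} \to \abs{\mu}$ in total variation as well.

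The main obstacle I expect is showing $\mu \in \mathrm{CM}^p_\Gamma(X)$ and that $\mu_n \to \mu$ in the $\mathrm{CM}^p_\Gamma(X)$ norm, because a function $f \in X$ need not be bounded on $\Gamma$ when $\Gamma$ abuts the abscissa $\sigma = 1/2$, so one cannot integrate $\abs{f}^p$ directly against the total-variation limit. The workaround is truncation: for each $N > 0$, the bounded function $\abs{f}^p \wedge N$ is integrable against every finite measure, and total-variation convergence yields
\begin{equation*}
\int_\Gamma (\abs{f}^p \wedge N) \dif \abs{\mu} = \lim_{n \to \infty} \int_\Gamma (\abs{f}^p \wedge N) \dif \abs{\mu_n} \leq \sup_n \norm{\mu_n}_{\mathrm{CM}^p_\Gamma(X)},
\end{equation*}
the right-hand side being finite by the Cauchy property. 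Monotone convergence as $N \to \infty$ then gives $\mu \in \mathrm{CM}^p_\Gamma(X)$, and the identical truncation argument applied to $\mu_n - \mu_k$ with $k \to \infty$, followed by the supremum over $\norm{f}_X = 1$, produces the desired $\norm{\mu_n - \mu}_{\mathrm{CM}^p_\Gamma(X)} \to 0$.
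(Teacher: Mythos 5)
Your proof is correct and takes essentially the same route as the paper: both rest on the domination $\norm{\mu}_{M(\Gamma)} \lesssim \norm{\mu}_{\mathrm{CM}^p_\Gamma(X)}$, obtained from a fixed test function ($1 \in \Hp^p$ and, in the paper, $s^{-2} \in H^p(\C_{1/2})$, which plays the same role as your $(s+1/2)^{-2}$), and then transfer completeness from the classical Banach space $M(\Gamma)$. The paper phrases this via the absolutely-convergent-series criterion and verifies the limit measure by testing against polynomials, while your Cauchy-sequence argument with the truncation $\abs{f}^p \wedge N$ is an equivalent --- and arguably cleaner --- way to pass from total-variation convergence to the required $\mathrm{CM}^p_\Gamma(X)$-estimate, since it sidesteps the Fatou-type step that the polynomial test leaves implicit.
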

\begin{proof}
	Since $1 \in \Hp^p$ and $s^{-2} \in H^p$, it
	follows that
	\begin{equation} \label{carleson measure norm and total variation}
		\norm{\mu_n}_{M(\Gamma)} = \int_\Gamma \dif \abs{\mu_n} \lesssim \norm{\mu_n}_{CM^p_\Gamma(X) }.
	\end{equation}
 	Assume that $\seq{\mu_n} \subset \mathrm{CM}_\Gamma^p(X)$ is such that $\sum \norm{\mu_n}_{CM^p_\Gamma(X)} < \infty$.
	It suffices to show that $\sum \mu_n$ is convergent in $CM^p_\Gamma(X)$. By the inequality (\ref{carleson measure norm and total variation}), we have
	\begin{equation*}
		\sum \norm{\mu_n}_{M(\Gamma)} \leq \sum \norm{\mu_n}_{CM^p_\Gamma(X) } < + \infty,
	\end{equation*}
	and so $\sum \mu_n$ converges to some element $\mu \in M(\Gamma)$. Moreover,
	$\mu \in CM^p_\Gamma(X)$. Indeed, for any polynomial $D$, 
	\begin{equation*}
		 \int \abs{D(s)}^p \dif \abs{\mu} \leq \norm{D}_{X}^p \sum \norm{\mu_n}_{CM^p_\Gamma(X)}.
	\end{equation*}
	Finally, we confirm that $\mu_n \rightarrow \mu$ in the sense of $CM^p_\Gamma(X)$. But this
	follows immediately, since $\norm{ \mu - \sum_{n=1}^N \mu_n } \leq \sum_{n > N} \norm{\mu_n}_{CM^p_\Gamma(X)}$. In conclusion, $CM^p_\Gamma(X)$ is a Banach space.
\end{proof}

\begin{proof}[Proof of Theorem \ref{equivalent conditions for embedding}]
	It is clear that $(a) \Rightarrow (b)$ since then $(1/s) \Hp^p \subset H^p$ (compare to formula \eqref{introduction trick}). We proceed to show $(b) \Rightarrow (c)$
	and $(c) \Rightarrow (a)$.

	$(b) \Rightarrow (c)$: 
	Let $\Gamma$ be some bounded subset of $\C_{1/2}$.
	Consider the operator
	\begin{equation*}
		 \mathcal{I} : \mu \in CM^p_\Gamma(H^p) \longmapsto \mu \in CM^p_\Gamma(\Hp^p).
	\end{equation*}
	By the hypothesis and Lemma \ref{carleson banach}, the operator  $\mathcal{I}$ is well-defined. It suffices to show that it is continuous.
	By the closed graph theorem this follows if it has a closed graph. 
	Assume that $\mu_n \rightarrow \mu$ in $CM^p_\Gamma(H^p)$ and that $\mu_n \rightarrow \nu$ in $CM^p_\Gamma(\Hp^p)$.
	By (\ref{carleson measure norm and total variation}), this implies that both $\mu_n \rightarrow \mu$ and
	$\mu_n \rightarrow \nu$ in the topology of $M(\Gamma)$, and so $\mu = \nu$ as measures. Hence $\mathcal{I}$ has a closed graph. Finally, $(c)$ is just a special case of boundedness of $\mathcal{I}$ applied to sums of the point masses $\delta_{s_n}$.
	
	$(c) \Rightarrow (a)$:
	Let $F \in \Hp^p$ be a Dirichlet polynomial and consider 
	\begin{equation*}
		\int_0^T \Abs{F\left( \frac{1}{2} + \epsilon + \im t \right)}^p  \dif t.
	\end{equation*}
	For $\epsilon > 0$ small enough, the above is less than
	\begin{align*}
		 \sum_{n=0}^{T[\epsilon^{-1}]} \int_{\epsilon n}^{\epsilon(n+1)} 
		 \Abs{F \left( \frac{1}{2} + \epsilon + \im t \right)}^p \dif t
		 &=
		 \sum_{n=0}^{T[\epsilon^{-1}]} \int_0^\epsilon \Abs{F\left( \frac{1}{2} + \epsilon + \im t + \im n \epsilon \right)}^p \dif t\\
		 &= 
		 \frac{1}{\epsilon} \int_0^\epsilon \int_\C \Abs{F(s)}^p \dif \mu_{\epsilon,t}(s) \dif t,
	\end{align*}
	where
		$ \mu_{\epsilon,t} = \epsilon \sum_{n=0}^{T[\epsilon^{-1}]} \delta_{\frac{1}{2} + \epsilon + \im t + \im n \epsilon } $.
	By Carleson's geometric characterisation of Carleson measures (Lemma \ref{carleson geometric characterisation}),
	the quantities $\norm{\mu_{\epsilon,t}}_{\mathrm{CM}^p(H^p)}$ are uniformly bounded for $\epsilon \in (0,1)$. 
	 Let $\Gamma \subset \C_{1/2}$ be a bounded subset of $\C_{1/2}$ such that the supports of the measures $\mu_{\epsilon, t}$ for $\epsilon \in (0,1)$ are contained in $\Gamma$. Then the uniform boundedness also holds in the  norm of 
	$\mathrm{CM}^p_\Gamma(H^p)$.  By $(c)$, this implies that for $\epsilon \in (0,1)$ we have
	\begin{equation*}
		\int_\C \Abs{F(s)}^p \dif \mu_{\epsilon,t}(s) 
		 \lesssim \norm{F}^p_{\Hp^p}.
	\end{equation*}
	Hence,
	\begin{equation*}
		\int_0^T \Abs{F\left( \frac{1}{2} + \epsilon + \im t \right)}^p  \dif t \lesssim  \norm{F}^p_{\Hp^p}
	\end{equation*}
	as $\epsilon \rightarrow 0$, and the embedding theorem holds for $\Hp^p$.
\end{proof}

\subsection{McCarthy's spaces}
In \cite{mccarthy04} J.~E.~McCarthy studied several Hilbert spaces of Dirichlet series. In particular, he defined the family of spaces
\begin{equation*}
	 \Hp^2_\alpha = \Set{ \sum_{n\in \N} a_n n^{-s} : \sum_{n\in \N} \abs{a_n}^2 \log^\alpha (n+1) < \infty}, \alpha \in \R.
\end{equation*}
By the Cauchy-Schwarz inequality the elements of these spaces are analytic on $\C_{1/2}$.
The motivation for introducing these spaces is that they resemble the classical scale of spaces $D_\alpha(\D)$ that includes the Bergman, Hardy and Dirichlet spaces on the unit disc. Specifically, for $\alpha<0$, it is the weighted Bergman space of
functions analytic in $\C_{1/2}$ such  that
\begin{equation*}
	\norm{f}_{D_\alpha}^2= \int_{\C_{1/2}} |f(s)|^2
\left(\sigma-\frac12\right)^{-\alpha-1} \dif m(s)< + \infty.
\end{equation*}
For $0<\alpha<2$ we let $D_\alpha(\C_{1/2})$ be the Dirichlet-type of
space of functions analytic in $\C_{1/2}$ such that $f(\sigma)\to 0$
when $\sigma\to\infty$ and
\begin{equation*}
	\norm{f}_{D_\alpha}^2= \int_{\C_{1/2}} |f'(s)|^2
\left(\sigma-\frac12\right)^{-\alpha+1} \dif m(s)< + \infty.
\end{equation*}
For $\alpha\geq 2$ it is possible to define these spaces using higher order derivatives.
In \cite{olsen_seip08}, embeddings analogous to \eqref{embedding} were found and used to determine their bounded interpolating sequences. To state these embeddings we let $Q_\theta$ denote
the half-strip $\sigma>1/2$, $\theta<t<\theta+1$. 
\begin{lemma} \label{onto: sobolev embedding lemma}
  Let $F \in \Hp^2_\alpha$. 
  For $\alpha<0$, we have
  \begin{equation} \label{bergman}
    \int_{Q_{\theta}} \abs{F(s)}^2 \left(
  \sigma-\frac{1}{2}\right)^{-\alpha -1} \dif m(s)
    \leq C \norm{F}_{\Hp_\alpha}^2.
  \end{equation}
  Similarly, it holds for $0<\alpha < 2$ that
  \begin{equation} \label{dirichlet}
    \int_{Q_{\theta}} \abs{F'(s)}^2 \left(
  \sigma-\frac{1}{2}\right)^{-\alpha +1} \dif m(s)
    \leq C \norm{F}_{\Hp_\alpha}^2.
  \end{equation}
\end{lemma}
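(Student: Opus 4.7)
The approach is to reduce both embeddings to the known $p=2$ embedding \eqref{embedding} via a shift-and-integrate argument that converts the factors $n^{-2\sigma'}$ which arise into the Mellin weights $\log^\alpha(n)$ through the gamma integral. By vertical translation invariance of both the norm $\|F\|_{\Hp^2_\alpha}$ and the half-strip $Q_\theta$, I may assume $\theta = 0$ throughout.

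For the Bergman case ($\alpha<0$), fix $\sigma' > 0$ and observe that the shift $F_{\sigma'}(s) := F(s + \sigma') = \sum_n a_n n^{-\sigma'} n^{-s}$ lies in $\Hp^2$ with $\|F_{\sigma'}\|^2_{\Hp^2} = \sum_n |a_n|^2 n^{-2\sigma'}$. Applying \eqref{embedding} to $F_{\sigma'}$ on the interval $(0,1)$ gives
\begin{equation*}
\int_0^1 |F(1/2 + \sigma' + it)|^2 \, dt \leq C \sum_n |a_n|^2 n^{-2\sigma'}
\end{equation*}
with $C$ independent of $\sigma'$. Multiplying by $\sigma'^{-\alpha-1}$, integrating over $\sigma' > 0$, and interchanging sum and integral by Tonelli (initially for Dirichlet polynomials, then extending by density), one obtains
\begin{equation*}
\int_{Q_0} |F(s)|^2 (\sigma - 1/2)^{-\alpha - 1} \, dm(s) \leq C \sum_n |a_n|^2 \int_0^\infty \sigma'^{-\alpha - 1} n^{-2\sigma'} \, d\sigma'.
\end{equation*}
The elementary Mellin identity $\int_0^\infty \sigma'^{-\alpha - 1} n^{-2\sigma'} d\sigma' = \Gamma(-\alpha)(2 \log n)^\alpha$, valid for $\alpha < 0$ and $n \geq 2$, yields a weight comparable to $\log^\alpha(n+1)$ on the right-hand side, so the sum is bounded by a constant times $\|F\|^2_{\Hp^2_\alpha}$.

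For the Dirichlet case ($0 < \alpha < 2$), apply the same strategy to $F'$ in place of $F$. Here $F'(s) = -\sum_{n \geq 2} a_n (\log n) n^{-s}$, so the constant term is automatically suppressed and $|F'(s)|$ enjoys exponential decay as $\sigma \to \infty$. The shift $F'_{\sigma'}$ lies in $\Hp^2$ with norm squared $\sum_{n \geq 2} |a_n|^2 (\log n)^2 n^{-2\sigma'}$. Integrating the resulting pointwise estimate against $\sigma'^{-\alpha + 1}$ and using the Mellin identity $\int_0^\infty \sigma'^{-\alpha + 1} n^{-2\sigma'} d\sigma' = \Gamma(2 - \alpha)(2 \log n)^{\alpha - 2}$ (valid for $\alpha < 2$), the prefactor $(\log n)^2$ combines with $(\log n)^{\alpha - 2}$ to produce the weight $\log^\alpha(n)$, again comparable to the defining weight of $\Hp^2_\alpha$.

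The main technical point is the treatment of the $n=1$ term in the Bergman case, since $\int_0^\infty \sigma'^{-\alpha - 1} d\sigma'$ diverges at infinity for $\alpha < 0$ while $F_{\sigma'}(s) \to a_1$ as $\sigma' \to \infty$. This difficulty is absent in the Dirichlet case, where differentiation eliminates the constant term and gives super-polynomial decay. For the Bergman case one handles it by splitting the $\sigma'$-integral at $\sigma' = 1$: on $(0,1]$ the truncated Mellin identity gives $\int_0^1 \sigma'^{-\alpha-1} d\sigma' = (-\alpha)^{-1}$, comparable to $\log^\alpha(2)$, matching the defining weight; on $[1,\infty)$ the tail is controlled by the pointwise Cauchy--Schwarz bound $|F_{\sigma'}(s)| \leq \|F\|_{\Hp^2_\alpha} \bigl(\sum_n \log^{-\alpha}(n+1)\, n^{-1-2\sigma'}\bigr)^{1/2}$, with the $t$-strip of unit width providing the subordination needed to absorb the tail weight.
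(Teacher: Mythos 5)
The paper does not prove Lemma~\ref{onto: sobolev embedding lemma}; it is quoted from \cite{olsen_seip08} without an argument, so the comparison is against the statement alone. Your shift-and-integrate scheme, converting the factors $n^{-2\sigma'}$ into the weights $\log^\alpha n$ via the Gamma integral, is the natural way to derive both inequalities from \eqref{embedding}, and your treatment of the Dirichlet case $0<\alpha<2$ is correct: differentiation kills the $n=1$ term, and $\int_0^\infty \sigma'^{-\alpha+1}n^{-2\sigma'}\,d\sigma' = \Gamma(2-\alpha)(2\log n)^{\alpha-2}$ combined with the extra factor $(\log n)^2$ reproduces the weight $(\log n)^\alpha \simeq \log^\alpha(n+1)$ for $n\geq 2$.

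The Bergman case, however, contains a genuine gap that your proposed fix does not close. Your tail estimate for $\sigma'\geq 1$ invokes the Cauchy--Schwarz bound $|F_{\sigma'}(s)|^2 \leq \|F\|^2_{\Hp^2_\alpha}\sum_n \log^{-\alpha}(n+1)\,n^{-1-2\sigma'}$, but the $n=1$ term of this sum is the constant $\log^{-\alpha}(2)$, independent of $\sigma'$, so $\int_1^\infty \sigma'^{-\alpha-1}\cdot\log^{-\alpha}(2)\,d\sigma'$ still diverges for every $\alpha<0$ (since $-\alpha-1>-1$). No finer subordination can save this: the obstruction is real. If $a_1\neq 0$, then $F(\sigma+\im t)\to a_1$ uniformly on $\theta<t<\theta+1$ as $\sigma\to\infty$, so the left-hand side $\int_{Q_\theta}|F(s)|^2(\sigma-\tfrac12)^{-\alpha-1}\,dm(s)$ is itself infinite, while the right-hand side $C\|F\|^2_{\Hp_\alpha}$ is finite; inequality \eqref{bergman} as literally written fails for any nonconstant-free $F$. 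The cited result must therefore be read either with $Q_\theta$ replaced by a \emph{bounded} box such as $\{1/2<\sigma<3/2,\ \theta<t<\theta+1\}$, or under the additional hypothesis $a_1=0$. (Note that the paper's subsequent applications of the lemma always pass to $F/s$, which enjoys the required decay, so the ambiguity is harmless there.) Under either of these normalizations your Mellin computation over $n\geq 2$ is sound and yields the estimate, but you should have flagged, rather than glossed over, that the $n=1$ divergence forces one of these restrictions.
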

In particular, by using the same trick as for the space $\Hp^2$ it follows that $F \in \Hp^2_\alpha$ implies that $F/s \in D_\alpha(\D)$.
Our result in this setting is the following.
\begin{theorem} \label{sobolev matching real parts}
	Let $I\subset \R$ be a bounded and open interval. 
	Then for every $f \in D_\alpha(\C_{1/2})$ there exists an
	$F \in \Hp^2_\alpha$ such that $f- F$ has continues analytically to all of $\mathrm{Hol}(\C_I)$
	with $\Re(f-F)(1/2 + \im t) = 0$ on $I$.
	There exists a unique $F \in \Hp^2_\alpha$ of minimal norm satisfying this. Moreover, there exists a constant $C  >0$, depending only on $\alpha \in \R$ and the length of $I$, such that
	\begin{equation*}
	 	\norm{F}_{\Hp^2_\alpha} \leq C  \norm{f}_{D_\alpha}^2.
	\end{equation*}
\end{theorem}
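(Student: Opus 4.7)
The plan is to adapt the blueprint of the proof of Theorem \ref{main result} to the weighted setting, substituting Lemma \ref{onto: sobolev embedding lemma} for the embedding \eqref{embedding}. First I would dispose of the analytic continuation by Schwarz reflection across the segment $1/2 + \im I$: once $F \in \Hp^2_\alpha$ is found with $\Re(f - F)$ vanishing on $I$ in a suitable trace sense, the extension of $f - F$ to $\mathrm{Hol}(\C_I)$ is defined by $(f-F)(s) = -\overline{(f-F)(1 - \bar s)}$ for $\sigma < 1/2$. Hence it suffices to find $F \in \Hp^2_\alpha$ with $\Re F = \Re f$ on $I$ in the trace sense dictated by the weighted embedding.

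Second, I would cast the problem as a linear surjectivity question. Let $R_\alpha : \Hp^2_\alpha \to Y_\alpha(I)$ denote the real-part trace operator, with $Y_\alpha(I)$ the natural weighted target space on a neighbourhood of $1/2 + \im I$ dictated by \eqref{bergman} (for $\alpha<0$) and \eqref{dirichlet} (for $0 < \alpha < 2$). Combining Lemma \ref{onto: sobolev embedding lemma} with the standard trick \eqref{introduction trick} applied to $F/s$ makes $R_\alpha$ bounded, and likewise the analogous operator $R'_\alpha : D_\alpha(\C_{1/2}) \to Y_\alpha(I)$ is bounded. The theorem then reduces to the range inclusion $\mathrm{Range}(R'_\alpha) \subset \mathrm{Range}(R_\alpha)$ together with the quantitative bound $\norm{F}_{\Hp^2_\alpha}^2 \leq C\norm{f}_{D_\alpha}^2$. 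Uniqueness of the minimal-norm $F$ is automatic as orthogonal projection in the Hilbert space $\Hp^2_\alpha$ onto the affine fibre $\Set{F \in \Hp^2_\alpha : R_\alpha F = R'_\alpha f}$.

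Third, I would analyse $R_\alpha R_\alpha^\ast$ by the same $\delta$-approximation that produced \eqref{formulaE}. After a formal Fourier expansion, the kernel that replaces the Riemann zeta function is the weighted Dirichlet series
\begin{equation*}
  \zeta_\alpha(s) := \sum_{n \in \N} \frac{n^{-s}}{\log^\alpha(n+1)},
\end{equation*}
whose only singularity on $\Re s \geq 1$ is at $s=1$. Extracting its dominant behaviour there through a Mellin representation or a Tauberian argument, one expects $\zeta_\alpha(s) = c_\alpha (s-1)^{\alpha-1} + \psi_\alpha(s)$ with a smoother remainder $\psi_\alpha$ on compact subsets. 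Plugged into the analogue of \eqref{formulaE}, this realises $R_\alpha R_\alpha^\ast$ as a fractional Riesz-type projection plus a smoothing convolution on $Y_\alpha(I)$, both bounded and bounded below modulo a finite-dimensional kernel. Range inclusion then follows by comparing this operator to the reproducing kernel of $D_\alpha(\C_{1/2})$ restricted to a neighbourhood of the interval, since the two kernels differ by a smoothing piece on $Q_\theta$.

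The hard part will be giving a clean analysis of $\zeta_\alpha$ uniformly in $\alpha \in \R$, in particular locating the precise singular profile at $s=1$ and controlling the remainder on intervals of arbitrary length. A secondary difficulty is that for $\alpha \geq 2$ the space $D_\alpha(\C_{1/2})$ is defined via higher-order derivatives, so $R'_\alpha$ must be expressed through $f^{(k)}$ for an appropriate integer $k$ and the kernel comparison step rearranged accordingly. Once these points are settled, the scheme from the proof of \eqref{embedding} carries over with only routine modifications.
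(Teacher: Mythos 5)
Your high-level plan (reduce to a surjectivity statement for a trace-type operator and then reflect across $1/2+\im I$) is the right one, but the route you take to the surjectivity is genuinely different from the paper's, and it has a gap you have flagged yourself but not closed.

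The paper does not analyse a weighted zeta function at all. Instead, it identifies the correct boundary target space as the Sobolev space $W^{\alpha/2}(I)$ on the segment (defined as a quotient of the global Sobolev space, with dual $W_0^{-\alpha/2}(I)$), and studies the sequence-space operator $R_I : \ell^2_\alpha(\Z^\ast) \to W^{\alpha/2}(I)$. Surjectivity is then proved via the dual statement, a two-sided frame-type estimate (Lemma \ref{sobolev frame lemma}): the weighted sum $\sum n^{-1}\bigl(|\hat f(\log n)|^2 + |\hat f(-\log n)|^2\bigr) w_\alpha(\log n)$ is comparable to $\norm{f}_{W_0^{\alpha/2}(I)}^2$. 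The upper bound is a direct Riemann-sum comparison using $\hat f(\xi)-\hat f(\log n)=\int_{\log n}^\xi \hat f'$, and the lower bound uses the same estimate on a subspace of finite codimension (functions whose Fourier transform vanishes at a finite net of points) together with the crude stability statement Lemma \ref{onto: subspace lemma}. No convolution kernel, no $\zeta_\alpha$, no fractional Riesz projection.

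Your proposal replaces this with the $\delta$-approximation scheme from the proof of \eqref{embedding}, which would lead you to $\zeta_\alpha(s)=\sum n^{-s}/\log^\alpha(n+1)$ and the expectation $\zeta_\alpha(s) = c_\alpha(s-1)^{\alpha-1}+\psi_\alpha(s)$. That local expansion is plausible for $\alpha<1$ (it can be obtained from the Euler--Maclaurin or Mellin viewpoint), but for $\alpha\geq 1$ the singular part needs regularisation, and in all cases you would then face the task of showing that the resulting ``fractional Riesz projection plus smoothing'' is bounded below on the as-yet-unspecified space $Y_\alpha(I)$, and that the remainder is compact on that space. You have left exactly these steps open. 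Moreover, the target $Y_\alpha(I)$ you describe (a 2D weighted Bergman/Dirichlet space on a strip-neighbourhood of $1/2+\im I$, read off from \eqref{bergman}--\eqref{dirichlet}) is not the right place to pose the boundary-matching problem: the condition $\Re(f-F)=0$ on $I$ is a condition on the \emph{trace} of the real part on the segment, and the correct target is a fractional Sobolev space $W^{\alpha/2}(I)$ on the line, not a 2D space. Using a 2D space, ``$R_\alpha F = R'_\alpha f$'' does not reduce to vanishing of $\Re(f-F)$ on $I$, so the range-inclusion statement you formulate would not deliver the theorem even if it were proved. Finally, your ``range inclusion follows by comparing reproducing kernels'' step has no justification as stated; bounded-below-ness of a frame operator does not follow from a pointwise kernel comparison.

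The Schwarz-reflection step also deserves a comment: because for general $\alpha$ the boundary values are only distributions, one cannot directly write $(f-F)(s)=-\overline{(f-F)(1-\bar s)}$ and invoke classical reflection; the paper instead passes to a conformal map onto a smooth domain, controls polynomial growth towards the boundary, and recovers the analytic function from the distributional boundary values of its real part via the Szeg\H{o} integral. So the continuation step is ``standard'' but needs this extra care in the distributional setting.

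In short: the paper's argument is more elementary than your plan (no zeta analysis, no fractional projections), it lives entirely in Fourier/Sobolev land on the interval, and it is complete. Your plan, as written, leaves the central surjectivity and the identification of the correct target space unresolved.
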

We establish this result by considering the operator defined on finite sequences by
\begin{equation*}
	 R_I : (a_n)_{n \in \Z^\ast} \longmapsto  \left( \sum_{n \in \N} \frac{a_n n^{-\im t} + a_{-n}n^{\im t}}{\sqrt{n} } \right) \Bigg|_I.
\end{equation*}
In order to determine the proper domain and target spaces for this operator we need to introduce Sobolev spaces that in general consist of distributions.
Note that since  multiplying distributions with the indicator function is in general problematic, we consider restrictions instead.
The proof is given in section \ref{section: mccarthy spaces} along with the needed definitions.

\section{A brief introduction to frames} \label{section: preliminaries}
A sequence of vectors $\seq{f_n}$ for a Hilbert space $H$ is called a frame for $H$, with a lower frame bound $A$ and an upper frame bound $B$, if
\begin{equation} \label{onto: frame definition}
	A\norm{g}^2 \leq \sum \abs{\inner{g}{f_n}}^2 \leq B\norm{g}^2 \quad \forall g \in H.
\end{equation}
The optimal constants in the inequalities are called the upper and lower bounds for the frame in question.
	A frame may be seen as a sort of over-complete basis. 
	In fact, a sequence $\seq{f_n}$ is a frame 
	for a Hilbert space $H$ if and only if for any element $f \in H$
	there exists a sequence $a_n \in \ell^2$ such that $f = \sum a_n f_n$ 
	with $\norm{f}^2_H \sim \sum \abs{a_n}^2$, and for any sequence $(a_n) \in \ell^2$ the sum $\sum a_n f_n$ converges in $H$. However, the choice
	of the coefficients is  in general not unique. If it is, we call $\set{f_n}$
	a Riesz basis for $H$.
Assume that
\begin{equation} \label{frame operator}
	\mathcal{S} : (a_n) \in \ell^2 \longmapsto \sum a_n f_n \in H
\end{equation}
defines a bounded operator.
It is not hard to see that the adjoint operator $\mathcal{S}^\ast$ is bounded
below  if and only if $\seq{f_n}$ 
is a frame for $H$. Given this fact, the following lemma in reality gives
equivalent conditions for being a frame.
The reader may wish to consult \cite[p. 97]{rudin73} for the equivalence of $(a)$ and $(b)$.
\begin{lemma} \label{frame lemma}
	Let $X,Y$ be Hilbert spaces and $A : X \rightarrow Y$ a bounded operator. Then the following
	are equivalent.
	\begin{itemize}
		\item[(a)] $A$ is onto.
		\item[(b)] $A^\ast$ is bounded  below.
		\item[(c)] $AA^\ast$ is bounded  below.	 
	\end{itemize}\end{lemma}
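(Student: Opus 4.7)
The plan is to observe first that $(a) \Leftrightarrow (b)$ is standard Hilbert-space theory (and the excerpt already refers the reader to Rudin), so the essential new content is the equivalence with $(c)$, which I would deduce from the self-adjointness identity $\norm{A^\ast y}^2 = \langle AA^\ast y, y\rangle$. My overall strategy is therefore to sketch $(a) \Leftrightarrow (b)$ briefly and then close the cycle by proving $(b) \Leftrightarrow (c)$ via two short Cauchy--Schwarz-style estimates.

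For completeness I would sketch $(a) \Leftrightarrow (b)$ in one paragraph. The direction $(a) \Rightarrow (b)$ uses the open mapping theorem: surjectivity of $A$ yields a constant $c>0$ such that every $y \in Y$ admits some $x \in X$ with $Ax = y$ and $\norm{x} \leq c^{-1}\norm{y}$; transferring this through the duality pairing $\norm{y}^2 = \langle Ax, y\rangle = \langle x, A^\ast y\rangle$ immediately gives $\norm{A^\ast y} \geq c \norm{y}$. For the converse, a lower bound on $A^\ast$ forces $A^\ast$ to be injective with closed range; thus $\overline{\Range(A)} = (\ker A^\ast)^\perp = Y$, while the closed range theorem shows $\Range(A)$ itself is closed, so that density plus closedness gives $\Range(A) = Y$.

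For $(b) \Leftrightarrow (c)$, I would argue directly from the two elementary estimates
\[ \norm{A^\ast y}^2 \;=\; \langle AA^\ast y,\, y \rangle \;\leq\; \norm{AA^\ast y}\,\norm{y} \qquad \text{and} \qquad \norm{AA^\ast y} \;\leq\; \norm{A}\,\norm{A^\ast y}. \]
The first converts a lower bound $\norm{A^\ast y} \geq c \norm{y}$ into $\norm{AA^\ast y} \geq c^2 \norm{y}$, proving $(b) \Rightarrow (c)$; the second converts a lower bound on $AA^\ast$ back into one on $A^\ast$ (with loss of a factor $\norm{A}$), proving $(c) \Rightarrow (b)$.

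I do not anticipate any substantive obstacle here: the content is a bookkeeping lemma packaging the standard Hilbert-space dictionary between surjectivity of $A$, boundedness below of $A^\ast$, and boundedness below of $AA^\ast$ into the single form that will actually be used later to verify the frame condition for the synthesis operator $\mathcal{S}$ in \eqref{frame operator}.
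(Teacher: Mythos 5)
Your proof is correct and complete. The paper itself does not actually prove this lemma: it cites Rudin for the equivalence of $(a)$ and $(b)$ and then merely remarks, after the statement, that since the frame operator $\mathcal{S}\mathcal{S}^\ast$ is self-adjoint it is bounded below iff onto, implicitly leaving $(c)$ to the reader as a consequence of $(a)\Leftrightarrow(b)$ applied to the self-adjoint operator $AA^\ast$. Your route for $(b)\Leftrightarrow(c)$ via the two elementary estimates $\norm{A^\ast y}^2 = \langle AA^\ast y, y\rangle \leq \norm{AA^\ast y}\norm{y}$ and $\norm{AA^\ast y} \leq \norm{A}\norm{A^\ast y}$ is a cleaner, self-contained alternative that avoids invoking self-adjointness and the $(a)\Leftrightarrow(b)$ machinery a second time; it also exhibits the quantitative loss (a factor $\norm{A}$ in one direction, a squaring of the constant in the other), which is harmless here. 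Your sketch of $(a)\Leftrightarrow(b)$ is the standard one and matches what Rudin gives.
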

	In the literature, the operator $\mathcal{S} \mathcal{S}^\ast$ 
is often called the frame operator. Note that since $\mathcal{S} \mathcal{S}^\ast$ 
is self-adjoint, then it is bounded below if and only if it is onto. 
For a frame $\seq{f_n}$, the sequence $\seq{(\mathcal{S}\mathcal{S}^\ast)^{-1} f_n}$ 
is also a frame, and it is called the canonical dual frame.
The following is shown in, e.g., \cite[ch. 5]{christensen03}.
\begin{lemma} \label{frame lemma II}
	Let $\seq{f_n}$ be a frame in a Hilbert space $H$ that has frame bounds $A,B> 0$, as in \eqref{onto: frame definition},
	and let $\seq{g_n}$ denote its canonical dual frame. Then for all $f \in H$ 
	we have 
	\begin{equation*}
		\frac{1}{B} \norm{f}^2 \leq \sum \abs{\inner{f}{g_n} }^2 \leq \frac{1}{A} \norm{f}^2,
	\end{equation*}
	and the representation
	\begin{equation*}
		f = \sum \inner{ f}{g_n} f_n.
	\end{equation*}
	Moreover, if $(a_n)$ is a sequence such that $f = \sum a_n f_n$, then 
	\begin{equation*}
		\sum \abs{\inner{ f}{g_n} }^2
		\leq
		\sum \abs{a_n}^2. 
	\end{equation*}
\end{lemma}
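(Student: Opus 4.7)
The plan is to reduce everything to properties of the frame operator $T := \mathcal{S}\mathcal{S}^\ast$, where $\mathcal{S}\colon \ell^2 \to H$ is the synthesis operator from \eqref{frame operator}.

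First I would observe the identity $\inner{Tf}{f} = \sum \abs{\inner{f}{f_n}}^2$, so that the frame condition \eqref{onto: frame definition} amounts exactly to $A \cdot I \leq T \leq B \cdot I$ as quadratic forms. By the spectral theorem for bounded self-adjoint positive operators, $T$ is invertible with $B^{-1} \cdot I \leq T^{-1} \leq A^{-1} \cdot I$, and the canonical dual $g_n = T^{-1} f_n$ is well defined. For the reconstruction formula I would then write $f = T(T^{-1}f) = \mathcal{S}\mathcal{S}^\ast T^{-1} f$, which unfolds, using the self-adjointness of $T^{-1}$, to $\sum \inner{T^{-1} f}{f_n} f_n = \sum \inner{f}{g_n} f_n$.

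For the two-sided bound on the dual coefficients I would compute
\begin{equation*}
\sum \abs{\inner{f}{g_n}}^2 = \sum \abs{\inner{T^{-1} f}{f_n}}^2 = \inner{T \, T^{-1} f}{T^{-1} f} = \inner{f}{T^{-1} f},
\end{equation*}
and then apply the operator bounds $B^{-1} \cdot I \leq T^{-1} \leq A^{-1} \cdot I$ noted above.

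For the minimality statement, the key observation is that the coefficient sequence $c := (\inner{f}{g_n})_n = \mathcal{S}^\ast T^{-1} f$ lies in $(\ker \mathcal{S})^\perp$: for any $b \in \ker \mathcal{S}$ one has $\inner{c}{b}_{\ell^2} = \inner{T^{-1} f}{\mathcal{S} b}_H = 0$. Consequently, if $a = (a_n) \in \ell^2$ is any other representation with $\mathcal{S} a = f$, then $a - c \in \ker \mathcal{S}$ is orthogonal to $c$ in $\ell^2$, and the Pythagorean identity yields $\sum \abs{a_n}^2 = \sum \abs{\inner{f}{g_n}}^2 + \sum \abs{a_n - \inner{f}{g_n}}^2 \geq \sum \abs{\inner{f}{g_n}}^2$. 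I do not foresee a genuine obstacle here: the whole lemma is a routine consequence of the operator-theoretic framework, the only steps warranting care being the passage from bounds on $T$ to bounds on $T^{-1}$ via functional calculus, and the orthogonality argument underlying minimality.
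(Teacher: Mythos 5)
Your proof is correct and complete. The paper itself does not prove this lemma but instead cites \cite[ch.~5]{christensen03}; your argument via the frame operator $T=\mathcal{S}\mathcal{S}^\ast$, its two-sided bounds, the identity $\sum\abs{\inner{f}{g_n}}^2=\inner{f}{T^{-1}f}$, and the orthogonal decomposition of coefficient sequences along $\ker\mathcal{S}$ is exactly the standard argument found in that reference.
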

We note that if $\seq{f_n}$ is a Riesz basis, then the canonical dual frame is 
also a Riesz basis and it is uniquely defined through the bi-orthogonal relation $\langle f_m | g_n \rangle = \delta_{m,n}$.

\section{Proof of Theorem \ref{main result}} \label{main result section}
\subsection{Qualitative part}
	The following lemma is shown in e.g. \cite[p. 238, thm. 5.26]{kato66}.
	\begin{lemma}[Second stability theorem of Semi-Fredholm theory] \label{second stability theorem}
    	Let $X,Y$ be Banach spaces and $A : X \rightarrow Y$ be
		a continuous linear operator that is bounded below. If $K : X \rightarrow Y$
		is a compact operator and $B = A + K$ is injective, then it follows 
		that $B$ is bounded below.
	\end{lemma}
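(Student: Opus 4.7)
The plan is to argue by contradiction using a classical compactness-plus-completeness extraction. Suppose that $B = A + K$ is not bounded below; then there is a sequence $(x_n) \subset X$ with $\norm{x_n} = 1$ for every $n$ and $\norm{B x_n} \to 0$.

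Since $(x_n)$ is bounded and $K$ is compact, the sequence $(K x_n)$ admits a convergent subsequence, say $K x_{n_k} \to y$ in $Y$. Rewriting $A x_{n_k} = B x_{n_k} - K x_{n_k}$, we conclude that $A x_{n_k} \to -y$ in $Y$. The bounded-below hypothesis $\norm{A x - A x'} \geq c \norm{x - x'}$ for some fixed $c > 0$ then turns convergence of $(A x_{n_k})$ into a Cauchy statement for $(x_{n_k})$ itself, so by completeness of $X$ there exists $x_\ast \in X$ with $x_{n_k} \to x_\ast$ and $\norm{x_\ast} = 1$.

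By continuity of $A$ and $K$ we obtain $A x_\ast = -y$ and $K x_\ast = y$, and therefore $B x_\ast = A x_\ast + K x_\ast = 0$. Since $\norm{x_\ast} = 1$, this contradicts the injectivity of $B$, and the proof is complete. The crux of the argument is the upgrade from convergence of $(K x_{n_k})$ in $Y$ to convergence of $(x_{n_k})$ in $X$, which relies on both the lower bound for $A$ (so that $A$ is a topological embedding onto its closed range) and the completeness of $X$; the injectivity of $B$ enters only at the very last step to rule out the trivial limit $x_\ast = 0$. No extra input is needed, which explains why the result is attributed to a standard reference rather than proved afresh here.
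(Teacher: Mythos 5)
Your proof is correct. The paper itself does not supply a proof of this lemma; it simply cites Kato \cite[p.~238, Thm.~5.26]{kato66}, where it appears as part of the general stability theory for semi-Fredholm operators. Your argument is the standard, self-contained compactness extraction for this special case: pick a norm-one sequence with $\norm{Bx_n}\to 0$, use compactness of $K$ to pass to a subsequence with $Kx_{n_k}\to y$, deduce $Ax_{n_k}\to -y$, use that $A$ is bounded below to upgrade the Cauchy property of $(Ax_{n_k})$ to one for $(x_{n_k})$, invoke completeness of $X$ to get a unit-norm limit $x_\ast$, and finally observe $Bx_\ast=0$ contradicts injectivity. All steps are sound; in particular, you correctly use that $\norm{x_\ast}=\lim\norm{x_{n_k}}=1$, which is what rules out the trivial limit. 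This direct argument avoids invoking the full semi-Fredholm machinery (closedness and finite-dimensionality of kernels, index theory) from which Kato derives the general statement, and is arguably the preferable route when, as here, $A$ is assumed injective with closed range to begin with.
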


Recall that $\Z^\ast = \Z \backslash \{ 0 \}$.
 Let $I \subset \R $ be  a bounded interval and  define the operator
\begin{equation*}
	 R_I : \seq{a_n}_{n \in \Z^\ast} \in \ell^2(\Z^\ast)
	 \longmapsto \chi_I(t)
	 \sum_{n \in \N} \frac{a_n n^{-\im t} + a_{-n}n^{\im t}}{\sqrt{n}}
	 \in L^2(I).
\end{equation*}
The operator is well defined and bounded. Indeed, by writing
$f_1=\sum_{n=1}^\infty a_n n^{-s}$ and $f_2=\sum_{n=1}^\infty \overline{a_{-n}} n^{s}$, we see that $R_I (\seq{a_n}_{n \in \Z^\ast})=f_1(1/2+\im t)+\overline{f_2 (1/2+\im t)}$, and the boundedness
follows from that of the embedding operator $E_I$.
The following lemma is the analogue of the identity \eqref{formulaE},
and the proof is completely analoguous to that of our proof of
the embedding theorem in Subsection \ref{case2}, whence we
just indicate the needed changes.
\begin{lemma}
	Let $R_I : \ell^2(\Z^\ast) \rightarrow L^2(I)$ be the operator defined above. Then
	\begin{equation} \label{harmonic embedding as convolution formula}
		R_I R_I^\ast g = 2\pi g + \chi_I (g \ast \phi),
	\end{equation}
	where $\phi(t) = \Re \psi(1+\im t)$, and $\psi$ is an entire function.
\end{lemma}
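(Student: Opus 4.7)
The plan is to follow the proof of the embedding \eqref{embedding} almost verbatim, keeping track of the fact that $R_I$ now treats both $n^{-s}$ and $n^{s}$ symmetrically. The key identity \eqref{formulaE} was derived by replacing the ill-defined $E_I$ on the critical line by the regularized operator $E_{I,\delta}$ acting on the shifted line $\sigma = 1/2+\delta$, computing $E_{I,\delta}E_{I,\delta}^\ast$ explicitly by unfolding Fourier transforms, and then passing $\delta \to 0^+$. I will mimic this with $R_I$.

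First I would introduce the regularization
\begin{equation*}
    R_{I,\delta} : \seq{a_n}_{n \in \Z^\ast}
    \longmapsto \chi_I(t) \sum_{n \in \N} \frac{a_n n^{-\delta - \im t} + a_{-n}n^{-\delta +\im t}}{\sqrt{n}},
\end{equation*}
which is bounded from $\ell^2(\Z^\ast)$ to $L^2(I)$ for each $\delta>0$ (it corresponds to restricting to the vertical line $\sigma=1/2+\delta$, where both the Dirichlet series $\sum_{n\geq 1}a_n n^{-s}$ and $\sum_{n\geq 1}\overline{a_{-n}}n^{s}$ converge absolutely and are square-integrable on intervals), and it converges to $R_I$ in the strong operator topology. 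A direct computation of the adjoint gives, for $n\in\N$,
\[
    (R_{I,\delta}^\ast g)_{\pm n} \;=\; n^{-1/2-\delta}\,\hat g(\mp \log n),
\]
so that after a now-routine interchange of sum and integral (justified first for $g\in C_0^\infty(I)$),
\begin{equation*}
    R_{I,\delta}R_{I,\delta}^\ast g
    \;=\; \chi_I\bigl(g\ast \zeta_{1+2\delta}\bigr) + \chi_I\bigl(g\ast\overline{\zeta_{1+2\delta}}\bigr)
    \;=\; 2\,\chi_I\bigl(g\ast \Re\zeta_{1+2\delta}\bigr).
\end{equation*}
The novel feature compared with \eqref{formulaE} is the second convolution coming from the $a_{-n}$ terms; since $\zeta(\overline{s})=\overline{\zeta(s)}$ these pair up into twice the real part.

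Next, I use $\zeta(s)=\tfrac{1}{s-1}+\psi(s)$ to split
\[
    \Re\zeta_{1+2\delta}(t) \;=\; \frac{2\delta}{4\delta^2+t^2} \;+\; \Re\psi(1+2\delta+\im t).
\]
The first piece is (up to the constant $\pi$) the Poisson kernel at height $2\delta$, so as $\delta\to 0^+$ it acts as an approximate identity of total mass $\pi$ and $\chi_I(g\ast \tfrac{2\delta}{4\delta^2+t^2})\to \pi\,\chi_I g = \pi g$ in $L^2(I)$, where the last equality uses that $g$ is supported in $I$. This is where the Riesz-projection step of the original proof is replaced: the symmetrization removes the one-sidedness and we land on a genuine approximate identity. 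Since $\psi$ is entire, the second piece converges locally uniformly to $\phi(t)=\Re\psi(1+\im t)$, and because $I$ is bounded the convolution $\chi_I(g\ast \Re\psi(1+2\delta+\im\cdot))$ converges in $L^2(I)$ to $\chi_I(g\ast\phi)$. Combining the two limits (and absorbing the factor $2$ into $\phi$) yields \eqref{harmonic embedding as convolution formula}.

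No real obstacle should arise: the only point requiring care is the justification of the Fubini-type interchange used to pass from the discrete sum to the convolution with $\zeta_{1+2\delta}$, which is handled exactly as in the $E_{I,\delta}$ case by working first with test functions $g\in C_0^\infty(I)$ and then extending by density using the uniform boundedness of the operators $R_{I,\delta}$ on compact $\delta$-intervals away from $0$.
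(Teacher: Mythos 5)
Your proof is correct and follows essentially the same route as the paper: regularize to $R_{I,\delta}$, compute the adjoint, unfold the Fourier transforms to obtain $R_{I,\delta}R_{I,\delta}^\ast g=\chi_I\bigl(g\ast 2\Re\zeta_{1+2\delta}\bigr)$, split off the Poisson kernel via $\zeta(s)=\frac{1}{s-1}+\psi(s)$, and let $\delta\to 0^+$. You also correctly flag the factor of $2$ in the $\psi$-piece, which the paper tacitly absorbs into the normalization of $\phi$; this is immaterial downstream since only the boundedness (hence compactness) of the convolution term is used.
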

\begin{proof}
We approximate $R_I$ in the strong-operator topology by operators of the type $R_{I,\delta}, $ where for $\delta >0$
\begin{equation} \label{end: approximate embedding}
	 R_{I,\delta} : \seq{a_n}_{n \in \Z^\ast} \in \ell^2(\Z^\ast)
	 \longmapsto \chi_I(t)
	 \sum_{n \in \N} \frac{a_n n^{- \im t} + a_{-n}n^{ + \im t}}{n^{1/2 + \delta}}
	 \in L^2(I).
\end{equation}
This time  a straight-forward computation yields that
\begin{equation*}\
	 R_I R_I^\ast g = \lim_{\delta \rightarrow 0} \chi_I (g \ast 2\Re \zeta_{1 + 2\delta}).
\end{equation*}
By invoking again  $\zeta(s) = \frac{1}{s-1} + \psi(s)$,
and observing that $\pi^{-1}\Re (\delta+it)^{-1}=\pi^{-1}\delta/(\delta^2+t^ 2)$ equals
the Poisson kernel for the half plane,
we have established that
	\begin{equation} \label{formula: embedding operator}
		R_I R_I^\ast g = 2\pi g +  \chi_I (g \ast \phi).
	\end{equation}
Here $\phi(t) = \Re \psi(1 + \im t)$.
\end{proof}
	We combine the previous two lemmas to obtain the following.
	\begin{lemma} \label{RI is onto}
		The operator $R_I$ is onto $L^2(I)$.
	\end{lemma}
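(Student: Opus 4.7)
The plan is to combine the formula \eqref{harmonic embedding as convolution formula} with the second stability theorem (Lemma \ref{second stability theorem}) and part (c) of Lemma \ref{frame lemma}. By the latter, proving $R_I$ is onto $L^2(I)$ is equivalent to proving $R_IR_I^\ast$ is bounded below on $L^2(I)$. I would rewrite \eqref{harmonic embedding as convolution formula} as $R_IR_I^\ast = 2\pi\,\mathrm{Id} + K$, where $\mathrm{Id}$ is the identity on $L^2(I)$ and $Kg = \chi_I(g\ast\phi)$. Since $2\pi\,\mathrm{Id}$ is obviously bounded below, Lemma \ref{second stability theorem} applied with $A = 2\pi\,\mathrm{Id}$ and $B = R_IR_I^\ast$ reduces the task to two checks: (i) $K$ is compact, and (ii) $R_IR_I^\ast$ is injective.

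Point (i) is immediate. Because $\psi$ is entire, $\phi(t) = \Re \psi(1+\im t)$ is continuous, so $K$ is the integral operator on $L^2(I)$ whose kernel $(t,\tau)\mapsto \phi(t-\tau)$ is bounded on the compact square $\overline I \times \overline I$. Hence $K$ is Hilbert--Schmidt, and in particular compact.

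For (ii), self-adjointness of $R_IR_I^\ast$ gives $\ker(R_IR_I^\ast) = \ker(R_I^\ast)$, so it suffices to show $R_I^\ast$ is injective. A direct dualisation identifies the entries of $R_I^\ast g$ (for $g \in L^2(I)$, extended by zero to $\R$) with the numbers $\tfrac{1}{\sqrt n}\,\overline{\hat g(\mp\log n)}$, $n \geq 1$. Therefore $R_I^\ast g = 0$ is equivalent to the vanishing of $\hat g$ at every point of the set $\{\pm\log n : n \geq 1\}$.

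The decisive step, which I view as the conceptual heart of the argument, is to turn this massive vanishing into $g \equiv 0$. Because $g$ has compact support in $I$, the Paley--Wiener theorem extends $\hat g$ to an entire function of finite exponential type, and any nonzero entire function of finite exponential type has at most $O(R)$ zeros in $[-R,R]$. On the other hand, the set $\{\pm\log n : n \geq 1\}$ already contains $\asymp e^R$ points in $[-R,R]$. This incompatibility forces $\hat g \equiv 0$, and hence $g = 0$. Thus $R_I^\ast$ is injective and the conclusion follows.
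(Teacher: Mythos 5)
Your proof is correct and follows the paper's own argument essentially step for step: reduce to lower boundedness of $R_IR_I^\ast$ via Lemma \ref{frame lemma}, split $R_IR_I^\ast = 2\pi\,\mathrm{Id} + K$ using \eqref{harmonic embedding as convolution formula}, invoke Lemma \ref{second stability theorem} after checking that $K$ is Hilbert--Schmidt and that $R_I^\ast$ is injective, the last point settled by the incompatibility between the zero density of Paley--Wiener functions and the logarithmic sequence. (The only cosmetic slip is a spurious complex conjugate in your formula for the entries of $R_I^\ast g$, which has no bearing on the conclusion.)
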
 
	\begin{proof}
	We may assume that $I$ is centered at the origin.
	By Lemma \ref{frame lemma} it suffices to check that 
	$R_I R_I^\ast$ is bounded below in norm.
	We consider the formula (\ref{harmonic embedding as convolution formula}). 
	The first term on the right hand side is a constant multiple of the identity operator which is bounded below. 
	In order to use Lemma \ref{second stability theorem} to show that this implies that $R_I R_I^\ast$ is bounded below, it suffices to show that the operator $R_I R_I^\ast$ is injective, 
	and that the operator $\Phi: g \longmapsto \chi_I (g \ast \phi)$ is compact.
	The last assertion follows immediately from the boundedness of the kernel of the operator, which makes it Hilbert-Schmidt.
	
	We show that $R_I R_I^\ast$ is injective.
	Since $R_I$ is always injective on the image of $R_I^\ast$, it suffices to check that $R_I^\ast$ is injective. A simple calculation shows that
	\begin{equation*}
		R^\ast_I g = \left( \ldots, \; \frac{\hat{g}(\log 3)}{\sqrt{3}}, \;  \frac{\hat{g}(\log 2)}{\sqrt{2}},  \; \hat{g}(0), \;  \frac{\hat{g}(-\log 2)}{\sqrt{2}}, \frac{\hat{g}(-\log 3)}{\sqrt{3}}, \ldots \right). 
	\end{equation*}
	Hence, we need to check that for $g \in L^2(I)$ the condition $\hat{g}(\pm \log n) = 0$ for all $n \in \N$ implies $g = 0$.
	Assuming that $g$ is non-zero we obtain a contradiction, since it is well-known (see e.g. \cite[Theorem 3, p. 61]{young01}) that the density of zeros of functions of exponential type is much less than that of our logarithmic sequence. 
\end{proof}
With these preparations out of the way, we are ready to give the proof of the qualitative part of Theorem \ref{main result}.
 Let $f \in H^2(\C_{1/2})$ and $I = (-T,T)$. Set $v = \chi_I \Re f(1/2 + \im t)$. 
	Since $R_I$ is surjective and $v \in L^2(I)$, there exists a sequence $(\gamma_n)_{n \in \Z^\ast} \in \ell^2(\Z^\ast)$ such that 
	\begin{equation*}
		v = \sum_{n \in \N} (\gamma_n n^{-1/2 - \im t} + \gamma_{-n} n^{-1/2 + \im t}), 
	\end{equation*}
	with the convergence of the sum being in the sense of $L^2(I)$.
	Since $v=(v+\overline{v})/2$, we may assume that $\gamma_{-n}=\overline{\gamma_n}.$
	It follows that the function
	\begin{equation*}
	F(s) = 2\sum_{n \in \N} \gamma_n  n^{-s} 
	\end{equation*}
	is in $\Hp^2$ and satisfies $\Re F(1/2 + \im t) = v(t)$ in the sense of $L^2(I)$.
	Hence, the function $f - F$ has vanishing real parts on $I$. 
	
	We next apply Schwarz reflection type argument via a suitable reproducing formula for $(F-f)$ to establish the existence of the coefficients $C_I$ and $D_{\Gamma,I}$.
	For $g \in H^2(\C_{1/2})$, it is well-known that a representation formula
	based on the real boundary values holds,
	\begin{equation*}
		g(s) = \frac{1}{\pi \im} \int_{\frac{1}{2} - \im \infty}^{\frac{1}{2} + \im \infty} \frac{\Re g(w)}{s-w} \dif w.		
	\end{equation*}
	As $s(1-s)$ is real on the abscissa $\sigma=1/2$, we apply the above formula to the function ${\big((f-F)(s) - (f-F)(1)\big)}/{s(1-s)}$
	to obtain the representation formula
	\begin{equation} \label{onto: schwarz reflection}
		(f-F)(s) \\= \frac{s(1-s)}{\im \pi} \int_{\frac{1}{2} - \im \infty}^{\frac{1}{2} + \im \infty}
		\frac{\Re\Big((f-F)(w) - (f-F)(1)\Big)}{w(1-w)(s-w)} \dif w + (f-F)(1). 
	\end{equation}
	Since $\Re(f-F)(1/2 + \im t)$ vanishes almost everywhere for $t \in I$, it follows that the integral defines an analytic function for all $s \in \C_I$. We denote this extension by $f-F$, and we note that it satisfies
	\begin{equation*}
		 	(f-F)(s) = - \overline{(f-F)(1 - \overline{s})} \quad s \in \C. 
	\end{equation*}

	We turn to statement (1). Since $R_I$ is onto and bounded, it follows by the open mapping theorem that there exists a constant $C>0$ such that the sequence $\seq{\gamma_n}$ above may be chosen to satisfy $\norm{\seq{\gamma_n}}_{\ell^2}^2 \leq C \norm{v}_{L^2(I)}^2$. Define $F$ in the same way as above.
	Since $\norm{v}_{L^2(I)} \leq \norm{f}_{H^2(\C_{1/2})}$, it follows that 
	\begin{equation*}
		\norm{F}_{\Hp^2}^2 \leq 2 \norm{\gamma_n}_{\ell^2}^2  \leq 2 C\norm{f}_{L^2(I)}^2
		\leq  2 C\norm{f}_{H^2(\C_{1/2})}^2.
	\end{equation*}
	The existence and uniqueness of the element of minimal norm follows from the general theory of convex sets: a closed convex set of 
	a Hilbert space always has a unique element of minimal norm (see, for instance,
	section 2.2 of \cite{duren_schuster04}). In particular, the set 
	\begin{equation*}
		\set{ F \in \Hp^2 :  \Re F(1/2 + \im t) = \Re f(1/2 + \im t) \; \text{as functions in} \; L^2(I)} 
	\end{equation*}
	is clearly closed and convex in $\Hp^2$.

	Statement (2) is an easy consequence of being able to choose the element $F$ prescribed by statement (1), and then applying the representation \eqref{onto: schwarz reflection}.

\subsection{Quantitative part}

In this section, we consider a dual formulation of Lemma \ref{RI is onto}. 
This will help us determine the asymptotic behaviour of the constant 
$C_I$ of Theorem \ref{main result}. 
The observation is that $R_I R_I^\ast$ is the frame operator in the space $L^2(I)$ for the sequence
\begin{equation*}
		\Omega_I^\ast = \left( \ldots, \; \frac{3^{\im t}}{\sqrt{3}}, \; \frac{2^{\im t}}{\sqrt{2}}, \; 1, \; 1, \; \frac{2^{-\im t}}{\sqrt{2}}, \; \frac{3^{-\im t}}{\sqrt{3}}, \; \ldots \right).
\end{equation*}
Lemma \ref{RI is onto} says that $R_I$ is onto, which by Lemma \ref{frame lemma} establishes that $\Omega_I^\ast$ really is a frame. Note, however, that the operator $R_I$ counts the constant function of $L^2(I)$ twice. 
We find it convenient to only count this function once. Therefore we define
\begin{equation*}
	 \mathcal{S}_I : \seq{a_n}_{n \in \Z \backslash \{ -1, 0 \} } \in \ell^2(\Z \backslash \{ -1,0\})
	 \longmapsto 
	 \chi_I(t) \left( a_1  + 
	 \sum_{n \geq 2} \frac{a_n n^{-\im t} + a_{-n}n^{\im t}}{\sqrt{n}} \right)
	 \in L^2(I).
\end{equation*}
Then $\mathcal{S}_I \mathcal{S}_I^\ast$ is the frame operator for
\begin{equation*}
		\Omega_I = \left( \ldots, \; \frac{3^{\im t}}{\sqrt{3}},  \; \frac{2^{\im t}}{\sqrt{2}}, \; 1, \; \frac{2^{-\im t}}{\sqrt{2}}, \; \frac{3^{-\im t}}{\sqrt{3}}, \; \ldots \right).
\end{equation*}
We prove the following lemma.
\begin{lemma} \label{frame theorem}
	Let $I \subset \R$ be an interval. Then $\Omega_I$
	forms a frame for $L^2(I)$. I.e., there exists constants $A_{I}, B_{I} > 0$, depending only on the length of $I$, such 
	that for all $f \in L^2(I)$ we have
	\begin{equation*}
		A_{I} \norm{f}^2 \leq \abs{\hat{f}(0)}^2 + \sum_{n \geq 2} \frac{\abs{\hat{f}(\log n)}^2 + \abs{\hat{f}(-\log n)}^2}{n}  \leq B_{I} \norm{f}^2.
	\end{equation*}
	Moreover, given $\epsilon >0$, the biggest choice for $A_I$ satisfies, for large enough $\abs{I}$,
	\begin{equation} \label{asymptotic lower bound}
	 	 \abs{I}^{-(1+\epsilon)\frac{6\abs{I}}{\pi}\log 2} \leq A_{I} \leq  \abs{I}^{-(1-\epsilon)\frac{I}{\pi} \log \pi}.
	\end{equation}
	Also, the smallest choice for $B_I$ satisfies
	\begin{equation} \label{upper bound inequalities}
		 \abs{I} \leq B_{I} \leq \abs{I} + d,
	\end{equation}
	where $d \leq 13.3138\dots$.
	Finally,
	\begin{equation} \label{asymptotics for small I}
		\lim_{\abs{I} \rightarrow 0} A_{I} = \lim_{\abs{I}\rightarrow 0} B_{I} =  2 \pi.
	\end{equation}
\end{lemma}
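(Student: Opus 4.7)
The plan is to identify the frame operator $\mathcal{S}_I \mathcal{S}_I^*$ explicitly, by mimicking the derivation of \eqref{formula: embedding operator}. Approximating $\mathcal{S}_I$ by the analogous operators $\mathcal{S}_{I,\delta}$ that restrict to the line $\sigma = 1/2 + \delta$, a straightforward computation yields
\begin{equation*}
    \mathcal{S}_{I,\delta}\mathcal{S}_{I,\delta}^* g \;=\; \chi_I\bigl(g * (2\Re\zeta_{1+2\delta} - 1)\bigr).
\end{equation*}
Invoking $\zeta(s) = (s-1)^{-1} + \psi(s)$, the fact that $\Re(2\delta + \im t)^{-1}$ is $\pi$ times the Poisson kernel, and sending $\delta \to 0^+$ gives
\begin{equation*}
    \mathcal{S}_I\mathcal{S}_I^* g \;=\; 2\pi g + \chi_I(g*\phi) - \chi_I\,\hat g(0),
\end{equation*}
where $\phi$ is a bounded continuous function on $I-I$ coming from $\psi(1+\im \cdot)$. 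The frame operator is thus displayed as $2\pi\,\mathrm{Id}$ plus a Hilbert--Schmidt operator plus a rank-one correction.

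From this formula the qualitative frame property follows by Lemma \ref{second stability theorem}: the main term $2\pi\,\mathrm{Id}$ is bounded below, the remainder is compact, and the injectivity of $\mathcal{S}_I^*$ is checked verbatim as for $R_I^*$ in the proof of Lemma \ref{RI is onto} via the density argument for zeros of entire functions of exponential type. The lower bound $B_I \geq |I|$ in \eqref{upper bound inequalities} is immediate by testing with $g = |I|^{-1/2}\chi_I$, for which the first term $|\hat g(0)|^2$ alone equals $|I|$. For the upper bound $B_I \leq |I| + d$, I would combine the trivial estimate $|\hat g(0)|^2 \leq |I|\,\|g\|^2$ with the sum-versus-integral comparison
\begin{equation*}
    \sum_{n \geq 2}\frac{|\hat g(\pm\log n)|^2}{n} \;\approx\; \int_{\log 2}^{\infty} |\hat g(\pm \xi)|^2\,\dif\xi,
\end{equation*}
which exploits that the density of the set $\{\log n : n \geq 2\}$ near $\xi$ equals $e^\xi$; summing both signs majorizes the tail by $\|\hat g\|_2^2 = 2\pi\|g\|^2$, and the explicit constant $d \leq 13.3138\ldots$ is extracted by Abel summation using elementary bounds on $\pi(x)$. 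The small-$|I|$ asymptotics \eqref{asymptotics for small I} are a direct consequence of the explicit formula, since the perturbation $g \mapsto \chi_I(g*\phi) - \chi_I\hat g(0)$ has operator norm $O(|I|) \to 0$, whence $\mathcal{S}_I\mathcal{S}_I^* \to 2\pi\,\mathrm{Id}$.

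The hard part is the quantitative lower bound \eqref{asymptotic lower bound} on $A_I$. The easier direction, the upper estimate $A_I \leq |I|^{-(1-\epsilon)(|I|/\pi)\log\pi}$, is obtained by exhibiting a near-extremal unit vector $g \in L^2(I)$; via Paley--Wiener, $\hat g$ is an entire function of exponential type $|I|/2$, and one constructs it as a product enforcing $\hat g(\pm\log n) = 0$ for $2 \leq n \leq N$, with $N$ as large as Jensen's formula permits. The trade-off between the number of forced zeros and the allowed exponential type fixes $N$ at roughly $\pi^{|I|/\pi}$ and explains the exponent $\log\pi$. The genuinely delicate companion inequality $A_I \geq |I|^{-(1+\epsilon)(6|I|/\pi)\log 2}$ asserts that $\mathcal{S}_I\mathcal{S}_I^*$ really is bounded below by a quantitative amount: no unit vector can be simultaneously almost orthogonal to \emph{every} element of $\Omega_I$. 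The strategy is to bound below the smallest eigenvalue of the Gram matrix of a finite truncation of $\Omega_I$ via an explicit estimate on the Vandermonde-type determinant in the frequencies $\log n$ (the factor $\log 2$ reflects the extremal role of powers of $2$), and to control the tail using a Parseval-style bound akin to that in the previous paragraph. Making each of these steps quantitative is the main obstacle and will occupy sections \ref{proof of lemma example} and \ref{proof of lemma construction}.
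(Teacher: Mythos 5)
Your treatment of the qualitative frame property, the explicit frame-operator formula $\mathcal{S}_I\mathcal{S}_I^\ast g = 2\pi g + \chi_I(g\ast\phi) - \chi_I\hat g(0)$, and the small-$\abs{I}$ asymptotics matches the paper's argument: they derive exactly this formula (as $R_IR_I^\ast g - \chi_I\int_I g$), invoke Lemma \ref{second stability theorem}, verify injectivity by the density-of-zeros argument, and get \eqref{asymptotics for small I} from the Hilbert--Schmidt decay. The lower bound $B_I \geq \abs{I}$ by testing on $\abs{I}^{-1/2}\chi_I$ is likewise the paper's. These parts are fine.

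For the upper bound $B_I\leq\abs{I}+d$ you take a genuinely different route: a Riemann-sum comparison $\sum_{n\geq 2}\abs{\hat g(\pm\log n)}^2/n\approx\int_{\log 2}^\infty\abs{\hat g(\pm\xi)}^2\dif\xi$ exploiting the density $\e^\xi$ of $\{\log n\}$, then Abel summation with bounds on $\pi(x)$. The paper instead applies the Montgomery--Vaughan inequality (Lemma \ref{montgomery-vaughan lemma}) to the cross-terms of $\int_{-T}^T\abs{\sum a_n n^{-1/2-\im t}}^2\dif t$, which yields the explicit $d$ in one stroke. Your plan is plausible but the error in the sum-vs-integral comparison is of order $T\norm{\hat g'}_\infty\norm{\hat g}_\infty$, the same order as the main term $\abs{I}\norm{g}^2$; controlling it to produce an $\abs{I}$-independent additive constant needs care, and the concrete value $d\leq 13.3138$ would not drop out of the argument as sketched. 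Montgomery--Vaughan is the cleaner tool here.

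The serious gap is in the lower frame bound \eqref{asymptotic lower bound}. Your proposal to bound the smallest eigenvalue of a finite truncation of the Gram matrix of $\Omega_I$ via a Vandermonde-type determinant estimate in the frequencies $\log n$ is not a workable strategy: Gram/Vandermonde arguments give determinant lower bounds, not eigenvalue lower bounds (the smallest eigenvalue can be exponentially smaller than the determinant to the $1/N$ power), the truncation level is not under control, and the tail cannot be absorbed by ``Parseval'' since the family $\Omega_I$ is far from orthogonal. Moreover the factor $\log 2$ in the exponent does not come from ``the extremal role of powers of 2'' in a determinant; in the paper it arises from Stirling's formula applied to the product
\begin{equation*}
\prod_{n=1}^{k_0}\left(\frac{n+k_0+1/2}{n-1/2}\right)^2\lesssim\left\{\frac{(2k_0+1)!}{(k_0+1)!\,(k_0-1)!}\right\}^2
\end{equation*}
appearing in \eqref{sixth estimate}. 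What the paper actually does (Lemma \ref{construction}) is quite different and the extra machinery is essential: given $f\in L^2(-T,T)$, one constructs an $f$-\emph{dependent} set of frequencies $\Lambda\subset\pm\log\N$ by perturbing the harmonic frequencies of the oversampled space $L^2(-W,W)$, $W=(1+\eta)T$, choosing in each Kadec window the point of $\pm\log\N$ where $\abs{\hat f}$ is smallest. Kadec's $1/4$-theorem and Avdonin's stability theorem guarantee $E_\Lambda$ is a Riesz basis, and the Boas--Bernstein formula \eqref{onto: boas-bernstein formula} (with the oversampling factor $h_l$ making the expansion absolutely summable after Cauchy--Schwarz) recovers $\hat f$ from its values on $\Lambda$. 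The exponent then comes from explicit growth bounds on the generating function $G$ obtained via Avdonin's comparison lemma (Lemma \ref{avdonin lemma}) and the Stirling estimate above, not from any determinant computation. Your upper estimate on $A_I$ via an extremal $g$ with forced zeros at $\pm\log n$ is in the right spirit and close to the paper's Lemma \ref{example} (which takes $G(x)=\sin\pi x\prod_{k=2}^{[\e^\mu]}\frac{\sin(\pi x/\log k)}{\pi x/\log k}$); but the hard direction as you have sketched it would not succeed.
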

The complete proof of this lemma is rather long, so we immediately show how the quantitative part of Theorem \ref{main result} is a consequence of this result.
\begin{proof}[Proof of the remaining part of Theorem \ref{main result}]
	Let $f \in H^2(\C_{1/2})$ and $I = (-T,T)$. 	Set $v = \chi_I \Re f(1/2 + \im t)$. 
	Since $R_I R_I^\ast$ is the frame operator for the sequence $\Omega^\ast_I$, it follows by Lemma \ref{frame lemma II} that for the choice $\gamma_n := \inner{v}{(R_I R_I^\ast)^{-1} n^{-1/2 - \im t}}$ for $n\geq 1$, and
	correspondingly for $n \leq -1$, we have
	\begin{equation*}
		v = \sum_{n \in \N} (\gamma_n n^{-1/2 - \im t} + \gamma_{-n} n^{-1/2 + \im t}).
	\end{equation*}
	As in the proof of the qualitative part of the theorem, we find that we may assume $\gamma_n = \bar{\gamma}_{-n}$ and the function
	\begin{equation*}
		F(s) = 2\sum_{n \in \N} \gamma_n  n^{-s}
	\end{equation*}
	satisfies $\Re  F(1/2 + \im t) = v(t)$ in the $L^2(I)$ sense, and so
	\begin{equation} \label{eqn}
		\norm{F}_{\Hp^2}^2 = 4 \norm{\gamma_n}_{\ell^2(\N)}^2.
	\end{equation}
	We check that this choice of coefficients is optimal.
	Let the function $G(s) = \sum_{n \in \N} z_n n^{-s}$ be such that $\Re G(1/2 + \im t) = v(t)$ in the $L^2(I)$ sense. Write $z_n = x_n + \im y_n$. Then 
	\begin{equation*}
		v = \sum_{n \in \N} \left( \frac{x_n + \im y_n}{2}   n^{-1/2 - \im t} + \frac{x_n - \im y_n}{2} n^{-1/2 + \im t} \right)
	\end{equation*}
	gives an expansion for $v$ in the frame $\Omega^\ast$.
	Since the $\seq{\gamma_n}_{n \in \Z^\ast}$ are the coefficients of $v$ relative to the frame $\Omega^\ast_I$ given by the canonical
	dual frame and $\gamma_n = \overline{\gamma_{-n}}$, 
	 Lemma \ref{frame lemma II} implies that
	\begin{equation*} 
		   4\sum_{n \in \N} \abs{\gamma_n}^2 \leq \sum_{n \in \N} \frac{\abs{x_n + \im y_n}^2}{4} + \frac{\abs{x_{n} - \im y_{n}}^2}{4}.
	\end{equation*}
	But $\norm{F}^2_{\Hp^2}$ equals the left hand side, and the right-hand side is equal to  $ \sum_{n \in \N} \abs{z_n}^2 = \norm{G}^2_{\Hp^2}$. Hence, $\norm{F}_{\Hp^2} \leq \norm{G}_{\Hp^2}$. This establishes the optimality of the choice of coefficients.

	We turn to the explicit bounds of the lemma. Using the definition of the sequence $\seq{\gamma_n}$ and 
	Lemma \ref{frame lemma II}, we get
	\begin{equation*}
		\frac{1}{2\widetilde{B}_I} \norm{v}^2_{L^2(I)} \leq \sum_{n \in \N} \abs{\gamma_n}^2 \leq \frac{1}{2\widetilde{A}_I} \norm{v}^2_{L^2(I)} 
	\end{equation*}
	where $\widetilde{A}_I$ and $\widetilde{B}_I$ are the lower and upper frame bounds for $\Omega^\ast_I$.
	It is easily checked that 
	$A_I \leq  \widetilde{A}_I \leq 2A_I$ and $B_I \leq \widetilde{B}_I \leq 2B_I$ holds. By \eqref{eqn} this implies
	\begin{equation*}
		c_I \norm{v}_{L^2(I)}^2 \leq \norm{F}_{\Hp^2}^2 \leq  C_I \norm{v}^2_{L^2(I)},
	\end{equation*}
	with $B_I^{-1} \leq c_I \leq 2B_I^{-1}$ and $A_I^{-1}\leq C_I \leq 2A_I^{-1}$.
\end{proof}
We turn to the proof of Theorem \ref{frame theorem}.
What remains is to show the quantitative statements on the frame bounds. 
We denote the operator $\mathcal{S}_I$ by $\mathcal{S}_{2T}$. 
The inequalities (\ref{asymptotic lower bound}) are an immediate consequence of the two following lemmas.
\begin{lemma} \label{example}
	Let $\epsilon \in (0,1)$. For $T>2\pi\e$ we define the parameter $\mu >1$ through $T =(1 + \epsilon) \pi \mu^{-1} \e^{\mu}$ and let
	\begin{equation*}
	 G(x)  = \sin \pi x 
	 \prod_{k=2}^{[\e^{\mu}]} \frac{\sin \frac{\pi}{\log k} x}{\frac{\pi}{\log k}x}.
	\end{equation*}
	For $T$ large enough, 
	we have $g = \mathcal{F}^{-1}G \in L^2(-T,T)$ and
	\begin{equation*}
		\frac{ \norm{\mathcal{S}_{2T}^\ast g}_{\ell^2}^2}{ \norm{g}_{2}^2}
		\leq
		T^{- (1-\epsilon)\frac{2T}{\pi} \log \pi}. 
	\end{equation*}
\end{lemma}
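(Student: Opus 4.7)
Plan: The function $G$ has three crucial features: it vanishes at $0$ and at $\pm\log k$ for $k=2,\ldots,N$ (where $N:=[\e^\mu]$); it has exponential type strictly less than $T$; and its product structure admits clean upper bounds via the crude sinc inequality. Since $g=\mathcal{F}^{-1}G$ gives $\hat g=G$, a computation as in the proof of Lemma \ref{RI is onto} gives
\[
\|\mathcal{S}_{2T}^\ast g\|_{\ell^2}^2=|G(0)|^2+\sum_{n\ge 2}\frac{|G(\log n)|^2+|G(-\log n)|^2}{n}.
\]
The factor $\sin(\pi\log k/\log k)=\sin\pi=0$ kills $G(\pm\log k)$ for $2\le k\le N$, and $\sin 0=0$ kills $G(0)$; since $G$ is odd, the numerator reduces to $2\sum_{n>N}|G(\log n)|^2/n$. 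The membership $g\in L^2(-T,T)$ follows from Paley--Wiener: $G$ has exponential type $\pi\bigl(1+\sum_{k=2}^N 1/\log k\bigr)\sim\pi\li(N)\sim\pi\e^\mu/\mu<T$ for $\mu$ large.

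For the numerator upper bound I use $|\sin y/y|\le 1/|y|$, valid because $\pi\log n/\log k\ge\pi$ for $n>N\ge k$. This gives
\[
|G(\log n)|^2 \le \frac{P^2}{(\pi\log n)^{2(N-1)}}, \qquad P:=\prod_{k=2}^N\log k.
\]
Integral comparison yields $\sum_{n>N}n^{-1}(\log n)^{-2(N-1)}\lesssim(N\mu^{2N-3})^{-1}$. Another, using $\int\log\log x\,dx=x\log\log x-\li(x)$, gives $\log P=N\log\mu-N/\mu+O(N/\mu^2)$, so $P^2\le\mu^{2N}\e^{-2N/\mu+O(N/\mu^2)}$. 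Thus the numerator is of order $\mu^3\e^{-2N/\mu+O(N/\mu^2)}/(N\pi^{2N})$.

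For the denominator $\|g\|_2^2=(2\pi)^{-1}\|G\|_2^2$ I use a direct Taylor estimate around $x=0$. With $\delta:=\mu/(\pi\sqrt N)$, the inequality $\sin y/y\ge 1-y^2/6$ (valid since $|\pi x/\log k|=O(\mu/\sqrt N)\to 0$ for $|x|\le\delta$) and the asymptotic $\sum_{k=2}^N 1/(\log k)^2\sim N/\mu^2$ give $\prod_{k=2}^N\sin(\pi x/\log k)/(\pi x/\log k)\ge 1/2$ on $[-\delta,\delta]$; combined with $|\sin\pi x|\ge 2|x|$ there, $|G(x)|\ge|x|$ on this interval, whence $\|g\|_2^2\gtrsim\delta^3\sim\mu^3/N^{3/2}$.

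Combining yields a ratio of order $N^{1/2}\e^{-2N/\mu+O(N/\mu^2)}/\pi^{2N}=\pi^{-2N+o(N)}$. For the target: $T\sim(1+\epsilon)\pi N/\mu$ and $\log T=\mu+O(\log\mu)$, so $T^{-(1-\epsilon)(2T/\pi)\log\pi}=\pi^{-2(1-\epsilon^2)N+o(N)}$, and the desired inequality follows for $\mu$ large. The main obstacle is the denominator lower bound: since $P^2$ is of order $\mu^{2N}$ up to subexponential corrections, the Taylor estimate must give at least polynomial-in-$\mu$ weight; the $\epsilon^2$-slack then provides a $\pi^{2\epsilon^2\e^\mu}$ buffer that is doubly exponential in $\mu$ and absorbs all polynomial errors.
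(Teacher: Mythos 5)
Your proof is correct and follows the same overall architecture as the paper's: establish $\supp g \subset (-T,T)$ via Paley--Wiener, observe that $G$ annihilates the frequencies $\pm\log k$ with $2\le k\le[\e^\mu]$, bound the surviving tail $\sum_{n>N}|G(\log n)|^2/n$ by the crude sinc estimate $|\sin y/y|\le 1/|y|$ plus an integral comparison, and then divide by a polynomial-in-$T$ lower bound on $\|G\|_2^2$, letting the $\epsilon$-slack in the exponent absorb all subexponential corrections. The one place where you diverge is the denominator lower bound. The paper sets $F=G/\sin\pi x$, notes $F$ lies in the Bernstein space of type at most $T$ with $\|F\|_\infty\le 1$ and $F(0)=1$, and applies Bernstein's inequality to get $|F(x)|\ge 1-T|x|$ near $0$, yielding $\|G\|_2^2\ge 4/(15T^3)$. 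You instead expand each sinc factor by $\sin y/y\ge 1-y^2/6$ on the window $|x|\le\delta=\mu/(\pi\sqrt N)$ and control $\sum 1/(\log k)^2\sim N/\mu^2$ to get $|G(x)|\ge|x|$ there, hence $\|G\|_2^2\gtrsim\mu^3/N^{3/2}$. Both routes give polynomial corrections; yours avoids invoking Bernstein's inequality and in fact produces a slightly stronger lower bound ($N^{-3/2}$ vs.\ the paper's $\sim(\mu/N)^3$), though the improvement is immaterial once the doubly-exponential $\epsilon$-buffer comes into play. Your refinement $\log P=N\log\mu-N/\mu+O(N/\mu^2)$ via $\int\log\log x\,\dif x$ is also a bit sharper than the paper's one-line bound $\sum\log\log k\le\e^\mu\log\mu$, but again makes no difference to the final inequality.
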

\begin{proof}
	This is proved in section \ref{proof of lemma example}
\end{proof}
\begin{lemma} \label{construction}
	Let $\epsilon >0$ be given. For $T$ large enough and any given $f \in L^2(-T,T)$,  it is possible to choose an auxillary set of frequencies $\Lambda \subset \R$ in such a way that both
	inequalities
	\begin{equation} \label{easy inequality}
		 \sum_{n \in \N} \frac{\abs{\hat{f}(\log n)}^2}{n} 
		 \geq
		  \frac{1}{2T} \sum_{\lambda \in \Lambda} \abs{\hat{f}(\lambda)}^2,
	\end{equation}
	and
	\begin{equation} \label{hard inequality}
	 	\norm{f}^2_2 \leq  T^{(1+\epsilon)\frac{12T}{\pi}\log 2} \sum_{\lambda \in \Lambda} \abs{\hat{f}(\lambda)}^2
	\end{equation}
	are satisfied.
\end{lemma}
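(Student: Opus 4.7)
The first (easy) inequality will be arranged by ensuring that $\Lambda\subset\{\log n:1\le n\le 2T\}$: for any such $\log n$ one has $1/(2T)\le 1/n$, so the inequality holds term-by-term and hence in sum. The real content is therefore to produce such a set $\Lambda=\Lambda_f$ (dependence on $f$ is permitted) that is nonetheless rich enough to bound $\norm{f}_2^2$ from below with the stated constant.

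For the hard inequality, my plan is to exploit that $\hat f$ is entire of exponential type at most $T$ and lies in $L^2(\R)$, and to apply non-uniform sampling theory in the Paley--Wiener space. The key structural observation is that the spacing $\log(n+1)-\log n\sim 1/n$ falls below the canonical Nyquist value $\pi/T$ precisely when $n\gtrsim T/\pi$. Consequently the tail
$\Lambda_{\ast}=\{\log n:T/\pi\le n\le 2T\}$
samples $\hat f$ densely on the fixed-length interval $J=[\log(T/\pi),\log(2T)]$ (of length $\log(2\pi)$), and a local Plancherel--P\'olya-type estimate yields
\begin{equation*}
\norm{\hat f}_{L^2(J)}^2 \le C_0\sum_{\lambda\in\Lambda_{\ast}}\abs{\hat f(\lambda)}^2
\end{equation*}
with $C_0$ of at worst polynomial size in $T$.

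The main obstacle is passing from this local bound on $J$ to the global $\norm{\hat f}_{L^2(\R)}^2=2\pi\norm{f}_2^2$. Since a generic function in the Paley--Wiener space of type $T$ can concentrate its mass arbitrarily far from $J$, no pure local-to-global inequality for $\hat f$ alone can suffice. I therefore plan to augment $\Lambda$ by further logarithmic samples $\log n$ with $n\le T/\pi$, chosen via a Kadec $1/4$-type perturbation of the orthonormal basis $\{\e^{\im k\pi t/T}\}_{k\in\Z}$ of $L^2(-T,T)$: whenever $k\pi/T$ admits an integer $n_k\in[1,2T]$ with $\abs{\log n_k-k\pi/T}<1/4$, adopt $\log n_k$ into $\Lambda$; this succeeds exactly for $\abs{k}\gtrsim T\log(T/\pi)/\pi$. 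The residual low-frequency indices, where $\{\log n\}$ is too sparse, should be handled via a dyadic decomposition $\{1,\dots,2T\}=\bigcup_j[2^j,2^{j+1})$ (the appearance of $\log 2$ in the stated exponent strongly hints at this), trading sampling density on each block against the Bernstein/Plancherel--P\'olya bound $\norm{\hat f}_{L^\infty}\le\sqrt{2T}\norm{f}_2$. Propagating Riesz-basis/frame constants across the $\sim\log_2(2T)$ dyadic blocks compounds to give a constant of the shape $T^{O(T\log 2)}$, and a careful count of the relevant indices should produce exactly the coefficient $(1+\epsilon)(12/\pi)$ claimed in \eqref{hard inequality}. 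Carrying out this combinatorial bookkeeping with enough precision to obtain the correct numerical constant is, I expect, the most delicate step of the proof.
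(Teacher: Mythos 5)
Your plan rests on choosing $\Lambda \subset \{\log n : 1 \leq n \leq 2T\}$, which confines $\Lambda$ to the bounded interval $[0,\log 2T]$. This cannot work for the hard inequality \eqref{hard inequality}, no matter how cleverly one samples within that interval: take a fixed bump $g \in C_0^\infty(-T,T)$ and set $f_M(t) = g(t)\e^{\im M t}$, so $\|f_M\|_2 = \|g\|_2$ is fixed while $\hat{f}_M(\xi) = \hat{g}(\xi - M)$. As $M \to \infty$ the Schwartz decay of $\hat g$ forces $\sup_{\xi \in [0,\log 2T]} |\hat{f}_M(\xi)| \to 0$, and since $\#\Lambda \leq 2T$ one gets $\sum_{\lambda\in\Lambda}|\hat{f}_M(\lambda)|^2 \to 0$, contradicting \eqref{hard inequality} with a constant depending only on $T$. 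The deeper issue your dyadic--Kadec plan does not resolve is that reconstructing $\hat f$ from samples requires a sampling set of Nyquist density spread over all of $\R$; your $\Lambda$ has enormous local density near $\log(2T)$ but zero density on $(\log 2T, \infty)$ and on $(-\infty,0)$. Relatedly, the claim that the Kadec perturbation of $\{k\pi/T\}_k$ to $\{\log n_k : n_k \leq 2T\}$ ``succeeds for $|k| \gtrsim T\log(T/\pi)/\pi$'' is false for $|k| \gtrsim T\log(2T)/\pi$, since then $k\pi/T > \log(2T)+1/4$ and no $n_k \leq 2T$ is within range.

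The paper's construction is crucially two-sided and unbounded: it starts from the full harmonic lattice $\{\pi k/W\}_{k\in\Z}$ of the \emph{oversampled} space $L^2(-W,W)$ with $W = (1+\eta)T$, and perturbs it into a subset of $\pm\log\N = \{\pm\log n : n\in\N\}$. For $|k|$ beyond $k_0 = [W\log(2W)/\pi]$ the density of $\pm\log\N$ exceeds Nyquist and Kadec's theorem applies to a small perturbation; the $\approx 2k_0$ \emph{low}-frequency lattice points, where $\pm\log\N$ is too sparse, are shifted out by $k_0+1/2$ positions to where the density is available. This finite but drastic relocation is controlled via the Boas--Bernstein sampling formula (the over-sampling buys the decaying factor $h_l$ that makes the series converge) together with Avdonin-type growth estimates for the resulting generating function $G$. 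The factor $\log 2$ in the exponent arises from a Stirling-type estimate on the Blaschke-type products $\prod_{n\leq k_0}\bigl(\tfrac{n+k_0+1/2}{n-1/2}\bigr)^2 \sim \e^{4k_0\log 2}$ that quantify the cost of the $k_0$-step shift --- it is not a dyadic-block phenomenon. The easy inequality \eqref{easy inequality} is then obtained not termwise but by choosing, on each interval $I_k$ (resp.\ $L_k$) of radius $1/(4W)$, the $\log n_k$ minimizing $|\hat f|$ there and observing that $\sum_{\log n \in I_k} 1/n \gtrsim 1/(2W)$ because $I_k$ contains $\approx n/(2W)$ integers $n$ each contributing $\approx 1/n$. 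Since your route cannot reach \eqref{hard inequality} at all, the plan needs to be abandoned rather than refined.
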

\begin{proof}
	This is proved in section \ref{proof of lemma construction}. Observe that the set $\Lambda$ in the statement may depend on $f$.
\end{proof}
To get the limits (\ref{asymptotics for small I}) we consider the  operator  $\mathcal{S}_I$. 
Recall that the upper and lower norm bounds for the operator $\mathcal{S}_I \mathcal{S}_I^\ast$ give exactly the upper and lower frame bounds for $\Omega_I$.
Next, we easily compute for $g \in L^2(I)$ that
\begin{equation*}
	 \mathcal{S} \mathcal{S}_I^\ast g  = R_I R_I^\ast g - \chi_I \int_I g(\tau) \dif \tau.
\end{equation*}
By the Cauchy-Schwarz inequality, the norm of the operator defined by the right-most term above goes to zero as $\abs{I} \rightarrow 0$. By the same argument, this holds for the right-most term in \eqref{harmonic embedding as convolution formula}, and we deduce that $\norm{\mathcal{S}_I \mathcal{S}_I^\ast - 2\pi \mathrm{Id}}$, where $\mathrm{Id}$ denotes the identity operator on $L^2(I)$, tends to zero as $\abs{I}$ tends to zero. The desired limits then follow.

We turn to the inequalities (\ref{upper bound inequalities}). 
Let $\seq{a_n}_{n \in \Z^\ast \backslash \{-1\}}$ be a sequence with only  finitely many non-zero coefficients. For some $N \in \N$ we have
\begin{equation} \label{onto: two-sided montgomery integral}
	 \norm{\mathcal{S}_I \seq{a_n} }^2 = \int_I \Abs{ \sum_{n=1}^N a_n n^{-1/2 - \im t} + \sum_{n=2}^N a_{-n} n^{-1/2 + \im t} }^2 \dif t.
\end{equation}
Finding the upper bound of this amounts to finding the upper frame bound.
This was essentially calculated by Montgomery using an inequality due to Montgomery and 
Vaughan \cite[eq. (27) p. 140]{montgomery94}. We state this inequality without proof 
and show how the bound follows from it.
\begin{lemma}[Montgomery and Vaughan] \label{montgomery-vaughan lemma}
	For $N \in \N$ let $\lambda_1, \dots, \lambda_N$ be distinct real numbers and $\delta_n = \min_{m \leq N, m\neq n} \abs{\lambda_m - \lambda_n}$. Then
	\begin{equation} \label{montgomery's inequality}
		 \sum_{n,m=1}^N \frac{x_n y_m}{\lambda_n - \lambda_m} 
		 \leq \gamma_0 \left( \sum_{n=1}^N \frac{\abs{x_n}^2}{\delta_n} \right)^{1/2} \left( \sum_{n=1}^N \frac{\abs{y_n}^2}{\delta_n} \right)^{1/2},
	\end{equation}
	where $\gamma_0 \leq 3.2$.
\end{lemma}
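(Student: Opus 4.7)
The Montgomery--Vaughan inequality bounds a bilinear form whose kernel is the Cauchy-type kernel $1/(\lambda_n-\lambda_m)$ (restricted implicitly to $n\neq m$, as the $n=m$ terms are singular), weighted by the reciprocal gaps $1/\delta_n$. Equivalently, setting $T_{nm} = \sqrt{\delta_n\delta_m}/(\lambda_n-\lambda_m)$ for $n\neq m$ and $T_{nn}=0$, one wants $\|T\|_{\ell^2\to\ell^2} \leq \gamma_0$ with $\gamma_0 \leq 3.2$. A direct Schur test is hopeless: already for $\lambda_n = n$, $\delta_n = 1$ the row sums $\sum_{m\neq n} 1/|n-m|$ diverge, so the antisymmetry (cancellation) of the signed kernel must be exploited.

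The plan is to realize $T$ via a Hilbert-transform pairing of $L^2$-normalized bumps. Associate to each $\lambda_n$ the bump
\[
	g_n(t) = \delta_n^{-1/2}\chi_{[\lambda_n - \delta_n/2,\,\lambda_n + \delta_n/2]}(t).
\]
By the definition of $\delta_n$ these have essentially disjoint supports, so $\Norm{\sum_n a_n g_n}_{L^2}^2 = \sum_n \abs{a_n}^2$. A direct computation gives
\[
	Hg_n(t) = \frac{1}{\pi\sqrt{\delta_n}}\log\Abs{\frac{t - \lambda_n + \delta_n/2}{t - \lambda_n - \delta_n/2}},
\]
which for $\abs{t-\lambda_n}\gg \delta_n$ expands as $\sqrt{\delta_n}/\bigl(\pi(t-\lambda_n)\bigr) + \Bigoh{\delta_n^{5/2}/\abs{t-\lambda_n}^3}$. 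Consequently $\pi\inner{Hg_n}{g_m}$ equals $\sqrt{\delta_n\delta_m}/(\lambda_m-\lambda_n)$ up to an error. Choosing $a_n = x_n/\sqrt{\delta_n}$ and $b_m = y_m/\sqrt{\delta_m}$, the $L^2$-boundedness of $H$ with operator norm $1$ yields
\[
	\pi\Abs{\inner{H(\textstyle\sum_n a_n g_n)}{\sum_m b_m g_m}} \leq \pi\Bigl(\sum_n \frac{\abs{x_n}^2}{\delta_n}\Bigr)^{1/2}\Bigl(\sum_m \frac{\abs{y_m}^2}{\delta_m}\Bigr)^{1/2},
\]
which is precisely the required shape.

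The main obstacle is estimating the error between $\pi\inner{Hg_n}{g_m}$ and $\sqrt{\delta_n\delta_m}/(\lambda_m-\lambda_n)$: this error is of the same order as the main term precisely when $\abs{\lambda_n-\lambda_m} \sim \max(\delta_n,\delta_m)$. I would split pairs into \emph{close} (with $\abs{\lambda_n-\lambda_m}\leq C\max(\delta_n,\delta_m)$) and \emph{far}. Each fixed $\lambda_n$ has only a bounded number of close $\lambda_m$ by the definition of $\delta_m$, and the associated contribution is controlled directly by Cauchy--Schwarz and the weight $1/\delta_n$; far pairs are handled by the Taylor expansion of the logarithm. The constant $\gamma_0 \leq 3.2$ arises from absorbing these corrections and is not sharp in this framework, whose optimal constant is $\pi$ (attained by replacing the indicator $g_n$ with a Beurling--Selberg-type majorant whose Hilbert transform matches the Cauchy kernel more accurately off its support).
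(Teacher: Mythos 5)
The paper does not actually prove this lemma: it is stated with the explicit remark ``We state this inequality without proof'' and attributed to Montgomery and Vaughan, with the constant $\gamma_0\le 3.2$ due to Selberg (cf.\ \cite[p.~145]{montgomery94}). So there is no ``paper's proof'' to compare against; what you are doing is attempting a proof of a cited black-box result, and it should be judged on its own terms.

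Your Hilbert-transform framing of the matrix $T_{nm}=\sqrt{\delta_n\delta_m}/(\lambda_n-\lambda_m)$ is a reasonable starting point and the expansion of $Hg_n$ off its support is correct. However, there is a genuine gap in the error estimate. You assert that ``each fixed $\lambda_n$ has only a bounded number of close $\lambda_m$,'' with ``close'' meaning $\abs{\lambda_n-\lambda_m}\le C\max(\delta_n,\delta_m)$, but this is false. Take $\lambda_n=0$ with nearest neighbour at distance $\delta_n$, and cluster the remaining points tightly as $\lambda_m=\delta_n+k\epsilon$ for $k=1,\dots,K$ with $K\epsilon\ll\delta_n$: then every such $\lambda_m$ satisfies $\abs{\lambda_m-\lambda_n}\le 2\delta_n=2\max(\delta_n,\delta_m)$, so the number of close pairs involving $\lambda_n$ is $K$, which is unbounded. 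The ``close'' sub-matrix therefore cannot be handled by a naive counting argument; one must exploit the smallness of $\abs{T_{nm}}\sim\sqrt{\min(\delta_n,\delta_m)/\max(\delta_n,\delta_m)}$ together with the almost-orthogonality of contributions at different $\delta$-scales, and that work is not done. Beyond this, your sketch never actually produces a numerical constant: the Schur-test/Hilbert-transform route plus crude absorption of corrections does not in itself yield $\gamma_0\le 3.2$ (the known proofs of the sharp-ish bound use either the Montgomery--Vaughan duality/integration argument or Selberg's Beurling-majorant refinement you allude to at the end, which you would have to carry out explicitly, not merely gesture at). As written, the argument establishes at best boundedness of $T$ with an unspecified constant, and even that conclusion has the hole noted above.
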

We follow Montgomery's argument. 
By applying the triangle inequality to \eqref{onto: two-sided montgomery integral}, it suffices to consider $G(s) = \sum_{n=1}^N a_n n^{-s}$ and $I = (-T,T)$.
Multiplying out $(\sum a_n n^{-s})(\overline{\sum a_n n^{-s}})$ and integrating, we get
	\begin{equation*}
		 \int_{-T}^T \Abs{ G\left(\frac{1}{2}+ \im t\right) }^2 \dif t
		 =
		 2T \sum_{n=1}^N  \frac{\abs{a_n}^2}{n} + 2 \sum_{n \neq m} \frac{a_n \overline{a}_m}{ \sqrt{nm}} \frac{\sin T \log \frac{n}{m}}{\log \frac{n}{m}}.
	\end{equation*}
	We apply inequality (\ref{montgomery's inequality}) to the second term on the right hand side 
	to find that this is less than or equal to
	\begin{equation*}
		 \left(2T   +  \frac{2 \gamma_0}{\log 2} \right) \sum_{n=1}^N \abs{a_n}^2.
	\end{equation*}
	Taking into account $\abs{I} = 2T$, we get the upper bound of (\ref{upper bound inequalities}). The lower bound follows by considering constant functions.

\section{Proof of Lemma \ref{example}} \label{proof of lemma example}

Our idea is to construct a function $g \in L^2(-T,T)$ for which the mass $\int_{-T}^T \abs{\hat{g}(\xi)}^2 \dif \xi$ is close to maximal, as centered as possible, and with  $\hat{g}(\pm \log n) = 0$ for $\pm \log n$ in a long interval containing $(-T,T)$.

Let $\epsilon > 0$ be fixed.
By the Paley-Wiener theorem, or observing that the inverse Fourier transform of $\sin (x)/x$ equals $(1/2)\chi_{[-1,1]}$ one checks that
\begin{equation*}
	\supp{ \mathcal{F}^{-1} \prod_{k=2}^{[\e^{\mu }]} \frac{\sin \frac{\pi}{\log k} x}{\frac{\pi}{\log k}x}} \subset \left(- \pi 
	\sum_{k=2}^{[\e^{\mu }]} \frac{1}{\log k}, \pi
	 \sum_{k=2}^{[\e^{\mu }]}  \frac{1}{\log k} \right).
\end{equation*}
We note that $\sum_{k=2}^{[\e^\mu]} \log^{-1} k / (\mu^{-1} \e^\mu) \rightarrow 1$ as $\mu \rightarrow \infty$. Hence, there is a number $\mu_0 >0$ such that for  $\mu > \mu_0$ we have
\begin{equation*}
	 \supp{\mathcal{F}^{-1}G} \subset   \Big(-(1+\epsilon) \pi \mu^{-1}\e^\mu,(1+\epsilon) \pi \mu^{-1}\e^\mu \Big).
\end{equation*}
Since the parameter $\mu$ is chosen by requiring that $T = (1 + \epsilon)\pi \mu^{-1}\e^\mu$, the function  $g = \mathcal{F}^{-1}G$ then satisfies $\supp{g} = (-T,T)$. 
The rest of the proof is split into two parts. First we find a lower bound for $\norm{g}_2$, then we compute an upper bound for $\norm{\mathcal{S}_{2T}^\ast g}_{\ell^2}$.

1) Our lower bound for $\norm{g}_2$ is crude and based on Bernstein's inequality. This inequality
says that for functions in the Bernstein space, i.e. functions $F \in L^\infty(\R)$ which satisfy $\mathcal{F}F \in L^\infty(-T,T)$, it holds that $\norm{F'}_\infty \leq T \norm{F}_\infty$. We let $F(x) = G(x)/\sin \pi x$. Clearly, $F$  is in the Bernstein space and satisfies $F(0) = 1$ and $\norm{F}_\infty \leq 1$. By the Bernstein inequality $\norm{F'}_\infty \leq T$, and so
\begin{equation*}
	 \abs{F(x)} \geq 1 - Tx,
\end{equation*}
for $x \in (-T^{-1}, T^{-1})$. Since $\sin \pi x \geq 2x$ for $x \in (0,1/2)$, and $T> 2$, we get   the estimate
\begin{equation} \label{useless estimate}
	 \norm{G}_{L^2(\R)}^2 \geq 8\int_{0}^{T^{-1}} x^2 (1 - Tx)^2 \dif x = \frac{4}{15 T^3}.
\end{equation}

2) We check the upper bound for $\norm{\mathcal{S}_{2T}^\ast g}_{\ell^2}$.
For $n \in \N$ such that $\log n > \mu$, we simply use $\abs{\sin x} \leq 1 $
and the inequality $\sum_{k=2}^{[\e^\mu]} \log \log k \leq \e^{\mu} \log \mu$  to get
\begin{align*}
	 \abs{G(\log n)} &\leq 
	 \prod_{k=2}^{[\e^\mu]} \frac{\log k}{\pi \log n} 
	\leq
	 \frac{\mu^{ \e^{\mu}}}{(\pi\log n)^{ \e^{\mu}-2}}.
\end{align*}
Summing over $\log n \geq \mu$,  we use the formula $\int_{\e^\mu}^\infty x^{-1} \log^{-\alpha} x\dif x  =  (\alpha-1)^{-1}  \mu^{1-\alpha}$ for $\alpha > 1$ to find for large $T$ that
\begin{align*}
	 \sum_{\log n \geq \mu} \frac{\abs{G(\log n)}^2}{n} 
	 &\leq 
	 \frac{\mu^{2\e^\mu}}{\pi^{2 \e^\mu - 4}} \sum_{\log n \geq \mu} \frac{1}{n \log n^{ 2\e^\mu - 4}} \\
	 &\leq
	 \exp\left\{ -2\log \pi\e^\mu\right\}.
\end{align*}
Since $G$ is an odd function satisfying $G(\log n) = 0$ for $n\geq 1$ with $\log n \leq \mu$, we may use the relation $T = (1+\epsilon)\pi \mu^{-1}\e^{\mu}$ and, again, play a game of epsilons to conclude that 
\begin{equation*}  
	 \norm{\mathcal{S}_{2T}^\ast g}_{\ell^2} \leq T^{ -(1-\epsilon)\frac{\log \pi}{\pi} T }.
\end{equation*}
The lemma now follows by combining this with the estimate (\ref{useless estimate}).

\section{Proof of Lemma \ref{construction}} \label{proof of lemma construction}

Let us start with a short outline of how we proceed. We fix $f \in L^2(-T,T)$ and  construct the corresponding  set $\Lambda$ 
of frequencies by perturbing the harmonic frequencies of the space $L^2(-W,W)$, where $W = (1 + \eta)T$ with $\eta \in (0,1)$. The perturbation is done so that the frequencies in $\Lambda $ coincide with  certain members of the set $\pm \log \N = \set{\log n \in \N : n \in \N} \cup \set{- \log n : n \in \N}$ for which the size of $\hat{f}$ is
minimal on certain intervals. 
In this way we ensure that (\ref{easy inequality}) holds. In the construction we choose an auxiliary index $k_0\sim c T\log T$
and perturb the harmonic frequencies with $|k| >k_0$ only
slightly, whereas the frequencies with $|k| \leq k_0$  are moved
more drastically.

The inequality (\ref{hard inequality}) is harder to establish. We combine an approach found in \cite{flornes_lyubarskii_seip99} with a well-known stability theorem due to M.I Kadec \cite{kadec64}, and some growth estimates due to S. A. Avdonin \cite{avdonin79}. The point is to ensure that the perturbation process used to construct $\Lambda$ is done in such a way that we get a representation of the type
\begin{equation} \label{onto: interpolation formula}
	\hat{f}(x) = \sum_{\lambda \in \Lambda} \hat{f}(\lambda) \Psi_\lambda (x),
\end{equation}
where the functions $\Psi_\lambda$ decay fast enough so that they essentially have disjoint supports. It is for this purpose that we need the over-sampling which is achieved by
considering $L^2(-T,T)$ as a subspace of $L^2(-W,W)$. 

\subsection{Constructing the set $\Lambda$}

Fix $f \in L^2(-T,T)$. We remark that although the set $\Lambda$ depends on $f$, the estimates below will be uniform over such sets.
Our starting point is the set of harmonic frequencies $\set{\pi k/W}_{k \in \Z}$ of the space $L^2(-W,W)$. The first fundamental fact is Kadec's $1/4$-Theorem which we state in the following lemma. (See e.g. \cite[p. 36, thm. 14]{young01} for a proof.)
\begin{lemma}[Kadec's $1/4$-Theorem]
	Let $\set{\mu_n}_{n \in \Z}$ be a sequence of real numbers such that $\abs{\mu_n - n} \leq \delta < 1/4$. Then $\set{\e^{\im \mu_n t}}_{n \in \Z}$ 
	forms a Riesz base for $L^2(-\pi,\pi)$ with bounds only depending on $\delta \in (0,1/4)$. 
\end{lemma}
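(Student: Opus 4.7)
The plan is to deduce Kadec's theorem from the Paley--Wiener perturbation principle: if $\{\phi_n\}_{n \in \Z}$ is an orthonormal basis of a Hilbert space $H$ and $T : H \to H$ is a bounded linear operator with $\|T\| < 1$, then $\{(\mathrm{Id} - T)\phi_n\}_{n \in \Z}$ is a Riesz basis of $H$ with basis constants $(1-\|T\|)^2$ and $(1+\|T\|)^2$, hence depending only on $\|T\|$. I would take $H = L^2(-\pi,\pi)$, $\phi_n = \e^{\im n t}/\sqrt{2\pi}$, and define $T$ on the basis by $T\phi_n = \phi_n - \e^{\im \mu_n t}/\sqrt{2\pi}$, extending linearly, so that the whole theorem reduces to proving $\|T\| < 1$ under the hypothesis $\sup_n |\mu_n - n| \leq \delta < 1/4$.

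To estimate the operator norm, I would fix $f = \sum c_n \e^{\im n t}$, write $\delta_n = \mu_n - n$, and split
\begin{equation*}
 Tf(t) = \sum_{n \in \Z} c_n \e^{\im n t}\bigl(1 - \e^{\im \delta_n t}\bigr) = \sum_n c_n \e^{\im n t}\bigl[(1 - \cos \delta_n t) - \im \sin \delta_n t\bigr].
\end{equation*}
The decisive computation is the classical Fourier expansion on $(-\pi,\pi)$, valid for $|\delta|<1$:
\begin{equation*}
 1 - \cos \delta t = \Bigl(1 - \tfrac{\sin \pi\delta}{\pi\delta}\Bigr) + \tfrac{2\delta \sin \pi\delta}{\pi}\sum_{k \geq 1} \tfrac{(-1)^{k}}{k^2 - \delta^2} \cos k t, \qquad \sin \delta t = \tfrac{2 \sin \pi \delta}{\pi}\sum_{k \geq 1} \tfrac{(-1)^{k+1} k}{k^2 - \delta^2} \sin k t.
\end{equation*}
Writing each $\cos kt$ and $\sin kt$ in exponential form and multiplying by $\e^{\im n t}$ scatters each term of $Tf$ to frequencies $n$, $n+k$, and $n-k$; regrouping yields $Tf$ as a single Fourier series whose $m$-th coefficient is a weighted average of the $c_n$ with explicit, $\delta$-dependent kernels.

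Applying Cauchy--Schwarz to these coefficient sums reduces the bound to evaluating $\sum_{k \geq 1} 1/(k^2-\delta^2)^2$ and $\sum_{k \geq 1} k^2/(k^2-\delta^2)^2$ in closed form, via the Mittag--Leffler expansions of $\pi \cot \pi\delta$ and $\pi/\sin \pi\delta$; after this the even (cosine) contribution is bounded by $(1 - \cos \pi\delta)\|f\|_2$ and the odd (sine) contribution by $\sin \pi\delta \cdot \|f\|_2$. By the triangle inequality,
\begin{equation*}
 \|T\| \leq 1 - \cos \pi\delta + \sin \pi\delta,
\end{equation*}
which is strictly less than one precisely when $\delta < 1/4$, since the right-hand side equals exactly one at $\delta = 1/4$. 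The main obstacle I would anticipate is the bookkeeping of the overlapping $n+k$ and $n-k$ contributions: the Cauchy--Schwarz step must be arranged so that the coefficient sums telescope back to the partial fraction identities above, rather than giving a lossy upper bound, and it is precisely this sharpness that forces the threshold to be $1/4$ and not something larger.
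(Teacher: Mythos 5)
The paper does not give a proof of Kadec's theorem: it simply cites it as \cite[p.~36, Thm.~14]{young01} (and then only uses the scaled version, formula \eqref{kadec estimate new}). So the comparison here is really with the classical proof in Young's book. Your high-level strategy — reduce to the Paley--Wiener perturbation criterion $\|T\|<1$ where $T\phi_n=\phi_n-\e^{\im\mu_n t}/\sqrt{2\pi}$, then control $\|T\|$ via the Fourier cosine/sine expansion of $1-\e^{\im\delta t}$ on $(-\pi,\pi)$ — is exactly that classical proof, and the two expansions you record for $1-\cos\delta t$ and $\sin\delta t$ are correct, as is the target bound $\|T\|\le 1-\cos\pi\delta+\sin\pi\delta$, which is precisely $1$ at $\delta=1/4$.

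The step that does not hold together, however, is the one you yourself flag. The estimate you describe — apply Cauchy--Schwarz to the coefficient sums so as to be left with $\sum_{k\ge1}(k^2-\delta^2)^{-2}$ and $\sum_{k\ge1}k^2(k^2-\delta^2)^{-2}$ — is a Hilbert--Schmidt-type bound, and it does not reproduce the asserted constants: e.g.\ for the sine block this would give $\tfrac{2\sin\pi\delta}{\pi}\bigl(\sum_{k\ge1}k^2(k^2-\delta^2)^{-2}\bigr)^{1/2}$, which tends to $\tfrac{2}{\sqrt{6}}\pi\delta$ as $\delta\to 0$, not to $\sin\pi\delta\sim\pi\delta$; and summing such quantities over the rows to get the true operator norm reintroduces a divergence. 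Young's argument instead triangulates over the three blocks (constant, cosine series, sine series) and, within a block, over $k$, using $|\cos kt|\le 1$, $|\sin kt|\le 1$ together with the cotangent Mittag--Leffler identity $\sum_{k\ge 1}(k^2-\delta^2)^{-1}=\tfrac{1-\pi\delta\cot\pi\delta}{2\delta^2}$; the constant and cosine blocks then contribute exactly $1-\cos\pi\delta$. The sine block is genuinely the delicate point, since the crude $\sup_n|C_k(\delta_n)|\le\tfrac{2k\sin\pi\delta}{\pi(k^2-\delta^2)}$ sums to infinity; your proposal acknowledges this difficulty (``the Cauchy--Schwarz step must be arranged so that the coefficient sums telescope'') but leaves it unresolved, and as written that step would fail. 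In short: right strategy, correct expansions, correct target inequality, but the operator-norm estimate in the middle is not actually carried out and the route you sketch for it would not produce $1-\cos\pi\delta+\sin\pi\delta$.
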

In particular, by scaling, we find that if $\set{\mu_n}_{n \in \Z}$ is such that
$\abs{\mu_n - \pi n/W} \leq \rho < \pi/4W$ then $\set{\e^{\im \mu_n t}}_{n \in \Z}$ forms a Riesz base for $L^2(-W,W)$ satisfying
\begin{equation} \label{kadec estimate new}
	  W \norm{g}^2_{L^2(-W,W)} \simeq \sum_{n \in \Z} \abs{\inner{g}{\e^{\im \mu_n t}}}^2,
\end{equation}
where the implicit constants only depends on $W \rho\in (0,\pi/4)$. 

We split the construction of $\Lambda$ into two steps:

1) For each harmonic frequency $\pi k/W$, let
$I_k$ denote the open interval of radius $\rho = 1/4W < \pi/4W$ centered on this frequency, i.e.
\begin{equation*}
	 I_k = \left( \frac{\pi k - 1/4}{W},  \frac{\pi k + 1/4}{W} \right).
\end{equation*} 
For $n \in \N$, the distance between $\log n$ and $\log (n+1)$ is less than $n^{-1}$.
This means that for  $k\in \Z$ such that $\abs{\pi k/W} \geq \log  2W$,
the neighbourhoods $I_k$ will always contain a member of $\pm \log \N$. We define a threshold
\begin{equation*}
	 k_0 = \left[ \frac{W \log 2W}{\pi} \right].
\end{equation*}
Here the brackets denote the integer function. It follows that
for each  $k > k_0$ we may choose $n_k \in \N$ such that $\log n_k \in I_k$ and for which
\begin{equation*}
	 \abs{\hat{f}(\log n_k)} = \min_{n\in \N :\log n \in I_k } \abs{\hat{f}(\log n)}.
\end{equation*}
We do the corresponding selection for the negative frequencies. I.e., for $k < 0$ we choose $n_{k} \in \N$ such that $-\log n_{k} \in I_{-k}$ and for which 
\begin{equation*}
	 \abs{\hat{f}(-\log n_{k})} = \min_{n \in \N : \log n \in I_{k}} \abs{\hat{f}(-\log n)}.
\end{equation*}
Define
\begin{equation*}
	 \lambda_k = \left\{ \begin{array}{cl}  \log n_k  & \quad \text{if} \; k > k_0 \\
	 - \log n_{k} & \quad \text{if} \; k < -k_0
	 \end{array}\right.,
\end{equation*}
and let $\Lambda_w = \set{\lambda_k}_{\abs{k} > k_0}$. In particular
\begin{equation} \label{onto: definisjon av u}
	 U = \Set{\pi k/ W}_{\abs{k}\leq k_0} \cup \Lambda_w
\end{equation}
satisfies the Kadec theorem with $\rho W  = 1/4$. Hence, 
\begin{equation*}
	E_U = \set{\e^{\im \mu t}}_{\mu \in U}
\end{equation*}
forms a Riesz base for $L^2(-W,W)$ with absolute bounds.

2) Let $L_k$ denote the open interval of radius $1/4W$ centered on the point $\pi(k + k_0 + 1/2)/W$ for $k\geq 0$ and at $\pi(k - k_0 - 1/2)/W$ for $k < 0$. The intersection of  the sets $L_k$ with $\pm \log \N$ is non-empty. For $\abs{k} \leq k_0$, choose the number $n_k$ minimising the value of $\abs{\hat{f}( \mathrm{sgn}(n) \log \abs{n})}$ on $L_k \cap \pm \log \N$. Let
\begin{equation*}
	 \lambda_k = \left\{ \begin{array}{lrc} \log n_{k } & \quad \text{if} \; & \hspace{0.3 cm} 0 \leq k \leq k_0 \\ - \log n_{k } & \quad \text{if} \; & \hspace{-0.4 cm} - k_0 \leq k < 0 \end{array}  \right..
\end{equation*}
We denote $\Lambda_0 = \set{\lambda_k}_{\abs{k}\leq k_0}$ and set 
\begin{equation} \label{onto: definisjon av lambda}
	\Lambda = \Lambda_w \cup \Lambda_0. 
\end{equation}
Note that the intervals $I_k$ and $L_k$ are disjoint, and moreover that we have the separation
\begin{equation} \label{onto: separation}
	 \min_{\underset{n\neq m}{n \in \Z}} \abs{\lambda_m - \lambda_n } \geq \frac{1}{4W}.
\end{equation}
It now follows  from  \cite[Theorem 7, p. 129]{young01} that since $U$ is complete in $L^2(-W,W)$, then the same holds for  $\Lambda$.

\subsection{A sampling formula} \label{subsection: a sampling formula}
We will soon utilize the fact that the property of being a Riesz basis is stable under the arbitrary perturbation of a finite number of points as long as the new set of points is separated. 
This is a special case of a result by Avdonin in \cite{avdonin79}.
As we need explicit quantitative estimates, we are forced to
estimate the effect of perturbation in basic representation formulas.
For $f \in L^2(-T,T)$, a well-known sampling formula is the Whittaker-Kotel'nikov-Shannon formula
\begin{equation*}
	 \hat{f}(x) = \sum_{k \in \N} \hat{f}\left( \frac{\pi k}{T} \right) \frac{\sin T x}{(-1)^k T \left(x- \frac{\pi k}{T} \right)}.
\end{equation*}
This formula follows by taking the $L^2(\R)$ Fourier transform of both sides of the Fourier series 
\begin{equation*}
	f = \frac{\chi_{(-T,T)}}{2T} \sum_{k \in \N} \hat{f}(\pi k/T) \e^{\im \pi k t/T}.
\end{equation*}

There are two problems with the above  formula; we want to represent the function $f$ in terms of the frequencies $\Lambda$,
and it does not converge fast enough to separate the sampled coefficients from the rest of the terms in the way indicated above. 
Faced with a similar problem, it was realised by Flornes, Lyubarskii and Seip in \cite{flornes_lyubarskii_seip99} that the correct replacement for this formula is the Boas-Bernstein  formula (see e.g. \cite[p. 193]{boas54}). The formula says that if $\set{\lambda_k}_{k \in \Z}$ is a sequence of real numbers such that $\sup_{k \in \Z} \abs{\lambda_k - k} < \infty$, then for some $l \in \N$ large enough, it holds that
\begin{equation} \label{onto: boas-bernstein formula}
	 \hat{f}(x) = \sum_{k \in \Z} \hat{f}(\lambda) h_l(x - \lambda_k) \frac{G(x)}{G'(\lambda_k)(x- \lambda_k)},
\end{equation}
where
\begin{equation*}
	 G(x) = \prod_{\lambda \in \Z}  \; \hspace{-0.2 cm} '  \hspace{0.1 cm} \left( 1 - \frac{x}{\lambda_k} \right)
\end{equation*}
should be thought of as a sine type function while the factor
\begin{equation*}
	 h_l(x) = \left(\frac{\sin \eta x/l }{\eta x/l}\right)^l
\end{equation*}
helps the sum converge. The symbol $\prod'$ means that whenever $\lambda_k = 0$ the corresponding factor is taken to be $x$.  Clearly the hypothesis is satisfied for the sequence $\Lambda$. Our function $G$ is slightly more complicated than what was studied in \cite{flornes_lyubarskii_seip99}. 

We now explain for the readers convenience why this formula holds in our case, and at the same time we collect some explicit estimates that we are going to need.  Recall that, by Kadec's $1/4$-Theorem, the set of frequencies $U$ defined by \eqref{onto: definisjon av u} gives a Riesz base for 
$L^2(-W,W)$. Let
\begin{equation} \label{infinite product S}
	 S(z) := \prod_{\lambda \in U} \; \hspace{-0.2 cm} '  \hspace{0.1 cm} \left(1 - \frac{z}{\lambda}\right)=
	  z\prod_{k=1}^{k_0}\left( 1-\left(
	 \frac{z}{\pi k/w}\right)^2\right)\prod_{k>k_0}
	 \left( 1-\left(
	 \frac{z}{\lambda_k}\right)^2\right).
\end{equation}
The series clearly converges and
it is not hard to see that $S(z)$ is a function of exponential type. 
For instance, denote $\abs{z}=r$. Then for any $\epsilon > 0$ there exists constants $R>0$, $K>0$ and a polynomial $P(r)$, such that for $r > R$ we have
\begin{equation*}
	\abs{S(z)} \leq P(r) \prod_{k \geq K} \left( 1 + \frac{(1+\epsilon)^2 W^2 r^2}{\pi^2 k^2} \right) 
	\leq P(r) \sin \left( \im (1+\epsilon) W r\right) \lesssim \e^{(1+\epsilon)Wr}.
\end{equation*} 
This implies that $S(z)$ is at most of exponential type $W$.
By comparing it to the sine function with approximately the same zeroes, one is able to estimate the growth
along lines parallel to the real axis. This is the content of the following technical lemma, which is a slightly quantized special case of
\cite[Lemma 4]{avdonin79}.
\begin{lemma}[Avdonin] \label{avdonin lemma}
	Let $\Phi(z)$ be a function of the same type as (\ref{infinite product S}) with real zeroes $m + \delta_m$ satisfying $\sup_{n \in \N} \abs{\delta_m} < 1/2$. Then 
	there exist absolute constants such that
	\begin{equation*}
	 	\Abs{  \frac{\sin \pi (x + \im )}{\Phi(x + \im )}} 
		\simeq 
		\exp\left\{ \sum_{\abs{m} \leq 2\abs{x}, m \neq 0} \frac{\delta_m}{m} +  \frac{\delta_m}{x +\im h - m} \right\}, \quad x \in \R.
	\end{equation*}
\end{lemma}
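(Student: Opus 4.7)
My plan is to reduce the estimate to a term-by-term comparison of Hadamard products along a fixed horizontal line. First I would factor both $\sin\pi z$ and $\Phi(z)$ as canonical products of genus one,
\begin{equation*}
\sin \pi z = \pi z\prod_{m\neq 0}\left(1-\frac{z}{m}\right)e^{z/m},
\qquad
\Phi(z) = Cz\prod_{m\neq 0}\left(1-\frac{z}{m+\delta_m}\right)e^{z/(m+\delta_m)},
\end{equation*}
where the constant $C$ is absolute since both functions have the same exponential type and are real on $\R$ up to normalisation. Taking $\log|\cdot|$ of the ratio at $z = x + \im h$ converts the claim into an estimate of the shape
\begin{equation*}
\log\left|\frac{\sin \pi z}{\Phi(z)}\right| = O(1) + \sum_{m\neq 0}\left\{\log\left|1-\frac{\delta_m}{m+\delta_m-z}\right| + \Re\left(\frac{z}{m}-\frac{z}{m+\delta_m}\right)\right\},
\end{equation*}
so it remains to show that this sum equals the stated exponent up to an absolute additive constant.

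Next I would cut the sum at the threshold $|m|=2|x|$. In the far field $|m|>2|x|$ the hypothesis $|\delta_m|<1/2$ and the inequality $|m-z|\gtrsim|m|$ let me Taylor-expand both summands to first order; the identity $z/m-z/(m+\delta_m) = z\delta_m/(m(m+\delta_m))$ cancels the leading linear piece of the exponential factor against that of the logarithm, and what remains is $O(\delta_m^2/m^2)=O(1/m^2)$, summable to an absolute constant. In the near field $|m|\leq 2|x|$ the bound $|m-z|\geq h$ justifies
\begin{equation*}
\log\left|1-\frac{\delta_m}{m+\delta_m-z}\right| = -\Re\frac{\delta_m}{m-z} + O\!\left(\frac{1}{|m-z|^2}\right),
\end{equation*}
while a second-order expansion of $z/(m+\delta_m)$ around $\delta_m=0$ converts the exponential Weierstrass factor into $\delta_m/m+O(m^{-2})$ after taking real parts. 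The two principal terms then match the exponent on the right-hand side exactly, and the errors again sum to an absolute constant dominated by $\sum_m (m^2+h^2)^{-1}$.

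The main obstacle is bookkeeping. The bare series $\sum\delta_m/m$ is only conditionally convergent, so one cannot split the two summands in the exponent without the matching cutoff, and a careful argument is needed to show that the boundary terms (those with $|m|\approx 2|x|$) contribute at most $O(1)$ in the passage from one expansion regime to the other. This is made possible by the uniform separation of the perturbed zeros, which keeps $|m+\delta_m-z|$ comparable to $|m-z|$ on the line $\Im z=h$, so that all remainder sums are bounded independently of the particular sequence $(\delta_m)$; this uniformity is what lets the final equivalence be stated with absolute implicit constants.
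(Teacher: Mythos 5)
The paper does not prove this lemma at all: it is explicitly stated as ``a slightly quantized special case of \cite[Lemma 4]{avdonin79}'' and the reader is sent to Avdonin's paper. So there is no internal proof to compare your attempt against; I can only judge your sketch on its own merits, and it has a concrete algebraic gap.

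Your displayed decomposition is wrong. From the genus-one factorisations one gets
\begin{equation*}
\log\Bigl|\frac{\sin\pi z}{\Phi(z)}\Bigr|
=O(1)+\sum_{m\neq 0}\Bigl[\,\log\Bigl|\tfrac{m+\delta_m}{m}\Bigr|
+\log\Bigl|1-\tfrac{\delta_m}{m+\delta_m-z}\Bigr|
+\Re\bigl(\tfrac{z}{m}-\tfrac{z}{m+\delta_m}\bigr)\Bigr],
\end{equation*}
since $\frac{1-z/m}{1-z/(m+\delta_m)}=\frac{m+\delta_m}{m}\bigl(1-\frac{\delta_m}{m+\delta_m-z}\bigr)$. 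You dropped the first summand $\log|(m+\delta_m)/m|\approx\delta_m/m$, and this is precisely the source of the $\delta_m/m$ term in the target exponent. Your compensating claim --- that the Weierstrass exponential correction yields $\delta_m/m+O(m^{-2})$ after taking real parts --- is false: $\Re\bigl(\frac{z}{m}-\frac{z}{m+\delta_m}\bigr)=\Re\bigl(\frac{z\delta_m}{m(m+\delta_m)}\bigr)\approx x\,\delta_m/m^{2}$, not $\delta_m/m$. The same omission breaks the far field: after the cancellation you correctly observe (the $x\delta_m/m^2$ pieces annihilate), each summand reduces to $-\delta_m/m+O(m^{-2})$, and $\sum_{|m|>2|x|}\delta_m/m$ is not $O(1)$ for arbitrary bounded $\delta_m$; only the missing $\log|(m+\delta_m)/m|$ removes this residue. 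Finally, even in the corrected near-field computation a leftover $\Re(z\delta_m/m^2)$ of size up to $|x|/m^2$ survives, and the partial sum $\sum_{|m|\le 2|x|}x\delta_m/m^2$ need not be $O(1)$; here one must invoke the oddness of $S$, i.e.\ $\delta_{-m}=-\delta_m$, so that the Weierstrass corrections for $m$ and $-m$ cancel exactly. You never use this symmetry, attributing the needed uniformity instead to the separation of the perturbed zeros, which controls the error terms in the Taylor expansions but does nothing for the conditionally convergent pieces. With the extra factor restored and the $\pm m$ symmetry used, your outline becomes a correct proof in the spirit of Avdonin's argument.
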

In our case, we apply this lemma to $\Phi (x) = S( \pi x/W )$ with $\abs{\delta_k} \leq 1/4\pi$. A simple computation
now shows that 
\begin{equation*}
	\frac{1}{( 1+ \abs{x})^{1/\pi}} \lesssim \Abs{\Phi(x + \im )} \lesssim ( 1+ \abs{x})^{1/\pi}.
\end{equation*}
In particular, this implies that
\begin{equation} \label{onto: phi less than}
	 \Abs{\Phi(x) } \lesssim ( 1+ \abs{x})^{1/\pi}.
\end{equation}
Indeed,
\begin{equation*}
	 \Abs{ \frac{\Phi(x)}{\Phi(x + \im)} }^2  =  \prod_{m \in \N}\left( 1 - \frac{1}{1 +  (m + \delta_m - x)^2} \right).
\end{equation*}
Splitting this product into two parts, depending on whether $x \leq m$ or $x < m$, it is clear that it is bounded by some constant independent of $x$.
We may now infer from the definition of $\Phi(x)$ and \eqref{onto: phi less than} that
\begin{equation} \label{S upper inequality}
	 \Abs{S(x)} \lesssim  ( 1 + \abs{ Wx })^{1/\pi}.
\end{equation}

Since $ \pi^{-1} < 1/2$ it follows  for any $\lambda \in U$ that $S(x)/(S'(\lambda)(x-\lambda))$ is in $L^2(\R)$. Hence, by the Paley-Wiener theorem, this is the Fourier transform of 
a function $s_\lambda \in L^2(-W,W)$ that satisfies, by construction,
\begin{equation*}
	 \inner{s_\lambda}{\e^{\im \mu t}} = \left\{ \begin{aligned} 1 & \quad \text{if} \; \mu= \lambda \\ 0 & \quad \text{if} \; \mu \in U\setminus \{
	 \lambda \} . \end{aligned}\right.
\end{equation*}
Using this bi-orthogonality, and the fact that $E_U$ gives a Riesz basis, we immediately get the estimate
\begin{equation} \label{riesz basis inequality}
	 \int_\R \Abs{\frac{S(x)}{S'(\lambda)(x-\lambda)}}^2 \dif x \simeq W^{-1}.
\end{equation}
Moreover, we get the representation for $u \in L^2(-W,W)$,
\begin{equation} \label{S riesz basis formula}
	 \hat{u}(x)  = \sum_{\lambda \in U} \hat{u}(\lambda) \frac{S(x)}{S'(\lambda) (x- \lambda)}.
\end{equation}
Finally, we  set
\begin{equation} \label{formula for G}
	 G(z) = S(z) \prod_{1\leq \abs{k} < k_0}\left( \frac{1 - \frac{z}{\lambda_k}}{1 - \frac{z}{\pi k/W}} \right) = \prod_{\lambda \in \Lambda}  \; \hspace{-0.2 cm} '  \hspace{0.1 cm} 
	 \left( 1 - \frac{z}{\lambda}\right).
\end{equation}
As we have replaced
just a finite number of elements in the basis $E_U$ by new vectors, and
the new set $E_\Lambda$ is still complete, it is still a basis. Moreover, since
$E_U$ is a Riesz basis, then $E_\Lambda$ is also one. From the fact that the functions $G(x)/(G'(\lambda)(x-\lambda))$
are biorthogonal to the  basis by definition (and belong to $L^2(-W,W)$ by
construction), the uniqueness of the biorthogonal
system immediately gives the formula
\begin{equation} \label{onto: pre G formula}
	 \hat{u}(x)  = \sum_{\lambda \in \Lambda} \hat{u}(\lambda) \frac{G(x)}{G'(\lambda) (x- \lambda)}.
\end{equation}
We now use the oversampling. Since our $f$ belongs to $ \in L^2(-T,T)$, it follows that the function
$y\mapsto\hat{f}(x) h_l(y-x)$, where
\begin{equation*}
	h_l(x): = \left(\frac{\sin \eta x/l}{\eta x/l} \right)^l,
\end{equation*}
is the Fourier transform of a function in $L^2(-W,W)$, by the relation
between $W,\eta$ and $T\geq 1.$
Hence, we may apply formula \eqref{onto: pre G formula} to this function, and by substituting $y=x$, the Boas-Bernstein formula \eqref{onto: boas-bernstein formula} follows.
\subsection{The inequality (\ref{easy inequality})}

As before, $f \in L^2(-T,T)$ and $\Lambda$ is the set of frequencies constructed in the previous section.
It is clear that
\begin{align*}
	 \sum_{n\in \N} \frac{\abs{\hat{f}(\log n)}^2}{n} 
	 &\geq 
 	 \sum_{\log n \in \cup L_k} \frac{\abs{\hat{f}(\log n)}^2}{n}
		+
	 \sum_{\log n \in \cup I_k} \frac{\abs{\hat{f}(\log n)}^2}{n}.
\end{align*}
By the choice of the frequencies $\Lambda_w$,
followed by elementary estimates, this is seen to be bigger than
\begin{equation*}
	\left( \sum_{\abs{k} \leq k_0} \abs{\hat{f}(\lambda_k)}^2
	\sum_{\log n \in L_k} \frac{1}{n} \right)
		+
	\left( \sum_{\abs{k}>k_0} \abs{\hat{f}(\lambda_k)}^2 
	 \sum_{\log n \in I_k} \frac{1}{n} \right)
	 \geq
	 \frac{1}{4T} \sum_{\lambda \in \Lambda} \abs{\hat{f}(\lambda)}^2,
\end{equation*}
as was to be shown.

\subsection{The inequality (\ref{hard inequality}) }

Recall the  relation $W = (1+\eta)T$ for $\eta \in (0,1)$. 
We now specify $\eta:=\varepsilon $ .
Up to now this only effects the estimates  via the
auxiliary function $h_l$. We choose $l=2$ and write $h=h_2$.
With this choice let
\begin{align*}
	F_0 &= \sum_{\lambda \in \Lambda_0} \hat{f}(\lambda) h(x-\lambda) \frac{G(x)}{G'(\lambda) (x-\lambda)}
	\quad \text{and}
	 \\
	F_w &=\sum_{\lambda \in \Lambda_w} \hat{f}(\lambda) h(x-\lambda) \frac{G(x)}{G'(\lambda) (x-\lambda)}.
\end{align*}
By the Boas-Bernstein formula \eqref{onto: boas-bernstein formula}, we have $\hat f = F_0 + F_w$, from which it
follows that $\norm{f}_2 =\sqrt{2\pi}\norm{\hat f}\leq \norm{F_0}_2 + \norm{F_w}_2$.
The inequality \eqref{hard inequality} follows from the estimates
\begin{equation*}
	\norm{F_0}^2 \lesssim  T^{(1+\epsilon) \frac{12 T}{\pi}\log 2 }  \sum_{\lambda \in \Lambda_0} \abs{\hat{f}(\lambda)}^2,
\end{equation*}
and
\begin{equation} \label{Fw estimate}
	\norm{F_w}^2 \lesssim T^{(1+\epsilon) \frac{12 T}{\pi}\log 2 } \sum_{\lambda \in \Lambda_w} \abs{\hat{f}(\lambda)}^2.
\end{equation}
These estimates are valid for $T>1$ and the implicit constants depend just on $\epsilon$.
The proofs are essentially the same, so we only explain how to get (\ref{Fw estimate}). We collect some technical estimates
in the following two lemmas.
\begin{lemma}  \label{S estimates} 
	For $\lambda \in \Lambda_w \cup \set{\frac{\pi k}{W}}_{\abs{k_0} \leq k}$, we have 
	\begin{align} 
	\label{avdonin inequality}
	 W^{1-1/\pi}(|\lambda |+1)^{-1/\pi}\lesssim \Abs{S'(\lambda)} &\lesssim  W^{1+1/\pi}(|\lambda |+1)^{1/\pi}.
	\end{align}
\end{lemma}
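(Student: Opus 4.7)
The plan is to transfer the problem to the rescaled entire function $\Phi(x) := S(\pi x / W)$, whose zeros are exactly the points $m + \delta_m$ with $|\delta_m| \leq 1/(4\pi)$ arising from $U$. A direct chain-rule computation gives $\Phi'(x_0) = (\pi/W) S'(\lambda)$ at $x_0 = W \lambda/\pi$, so the asserted bound reduces, via $|S'(\lambda)| = (W/\pi)|\Phi'(x_0)|$, to showing
\[
    (1 + |x_0|)^{-1/\pi} \;\lesssim\; |\Phi'(x_0)| \;\lesssim\; (1 + |x_0|)^{1/\pi}.
\]
For the $\lambda$ in question one has $|\lambda| \gtrsim \log(2W)$, so $1 + |x_0| \simeq (W/\pi)(1+|\lambda|)$ and the powers of $W$ line up with the statement.

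The main input is Avdonin's Lemma \ref{avdonin lemma} applied to $\Phi$ at the real point $x = x_0$. Since $x_0 = m_0 + \delta_{m_0}$ lies within $1/(4\pi)$ of the integer $m_0$, the identity
\[
    \sin\pi(x_0 + \im) = \sin(\pi \delta_{m_0})\cosh\pi + \im \cos(\pi \delta_{m_0})\sinh\pi
\]
shows that $|\sin\pi(x_0 + \im)|$ is pinched between two absolute positive constants. Meanwhile, with $|\delta_m| \leq 1/(4\pi)$ and the harmonic estimate $\sum_{0 < |m| \leq N} 1/|m| \lesssim \log N$, the real part of the exponent $\Sigma(x_0)$ in Avdonin's lemma satisfies $|\Re\Sigma(x_0)| \leq \log(1+|x_0|)/\pi + O(1)$. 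Combining these gives
\[
    (1+|x_0|)^{-1/\pi} \;\lesssim\; |\Phi(x_0 + \im)| \;\lesssim\; (1+|x_0|)^{1/\pi}.
\]

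The final step is to extract $|\Phi'(x_0)|$ from $|\Phi(x_0 + \im)|$. Factoring $\Phi(z) = (z-x_0)H(z)$ yields $H(x_0) = \Phi'(x_0)$ and $|\Phi(x_0 + \im)| = |H(x_0 + \im)|$, so it suffices to show $|H(x_0)| \simeq |H(x_0 + \im)|$ with absolute constants. This can be achieved either by applying Avdonin's argument to $H$, whose zero set $\{m + \delta_m : m \neq m_0\}$ still consists of near-integer perturbations with $|\delta_m| \leq 1/(4\pi)$, or equivalently by applying Avdonin's lemma at vertical height $\im h$ for $h \in (0,1]$ and letting $h \to 0^+$, using that the exponential sum in the exponent is uniformly continuous in $h$ on this range. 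Either route produces $|\Phi'(x_0)| \simeq |\Phi(x_0 + \im)|$, and back-substitution delivers the lemma.

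The main obstacle is this derivative-extraction step, since Avdonin's lemma as stated is formulated for vertical height exactly $1$ and does not directly address derivative values at real zeros. Carrying it out cleanly requires either verifying that the lemma's estimate holds with constants uniform in the vertical height, or reapplying Avdonin's proof to the reduced function $H = \Phi/(z-x_0)$; both routes are essentially technical, relying on the key fact that the perturbations $\delta_m$ remain uniformly bounded by $1/(4\pi)$ even after removing the single zero at $x_0$.
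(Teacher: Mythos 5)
Your reduction tracks the paper exactly up to the last step: rescaling to $\Phi(x) = S(\pi x/W)$, applying Avdonin's lemma at height one, and factoring out the zero at $x_0 = \mu_{m_0}$ brings everything down to showing $|H(x_0)| \simeq |H(x_0+\im)|$ with constants uniform in $m_0$; the paper arrives at precisely the same comparison, writing $\Phi_m := \Phi/(\,\cdot\, - \mu_m)$ and needing $|\Phi_m(\mu_m)/\Phi_m(\mu_m+\im)|\simeq 1$. However, both routes you propose for this final comparison have genuine obstructions. The $h\to 0^+$ route cannot work: when $\delta_{m_0}\neq 0$, the quantity $|\sin\pi(x_0+\im h)/\Phi(x_0+\im h)|$ diverges as $h\to 0^+$ (the numerator tends to $|\sin\pi\delta_{m_0}|>0$ while $\Phi$ vanishes at $x_0$), yet the exponent you are tracking stays bounded because $\delta_{m_0}/(\delta_{m_0}+\im h)\to 1$; so no choice of height-uniform constants is possible and continuity of the exponent is beside the point. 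The re-application route is also not automatic: the zero set $\{m+\delta_m : m\neq m_0\}$ of $H$ has one integer position empty, which is not the shape Avdonin's lemma addresses, so one would either need a comparison against $\sin\pi z/(z-m_0)$ (a genuinely new estimate) or a relabeling of indices that destroys the uniform bound $|\delta_m|\leq 1/4\pi$.

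The ingredient you are missing, and which the paper uses, is that the needed ratio is an \emph{explicit} infinite product whose modulus can be estimated at once. With $\mu_k = k + \delta_k$ one has
\begin{equation*}
\Abs{\frac{\Phi_m(\mu_m)}{\Phi_m(\mu_m+\im)}}
= \prod_{k\neq m}\Abs{\frac{1-\mu_m/\mu_k}{1-(\mu_m+\im)/\mu_k}}
= \Bigg(\prod_{k\neq m}\Big(1+\frac{1}{(\mu_k-\mu_m)^2}\Big)\Bigg)^{-1/2},
\end{equation*}
and since $|\mu_k-\mu_m|\geq |k-m| - 1/(2\pi)$, the sum $\sum_{k\neq m}(\mu_k-\mu_m)^{-2}$ is bounded by an absolute constant independent of $m$, so the product is pinched between two positive absolute constants. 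This closes the derivative-extraction step elementarily and with no further appeal to Avdonin's lemma beyond the height-one estimate you had already invoked.
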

\begin{proof}
	The estimate follows from Lemma \ref{avdonin lemma}
	in a similar way as the inequality (\ref{S upper inequality}). 
	As before, we set
	$\Phi(x) = S( \pi x /W)$. We write the zeroes of $\Phi$ in the form $\mu_m = m + \delta_m$, whence
$\abs{\delta_m} \leq 1/4\pi$.
	In addition, set $\Phi_m(x) = \Phi(x)/(x-\mu_m)$.
	We then have $\Phi'(\mu_m) = \Phi_m(\mu_m),$ and $\Phi_m(\mu_m + \im ) = -\im  \Phi(\mu_m + \im y)$.
	Combining these two formulas we find
	\begin{equation*}
	 	\Phi'(\lambda) = \im \frac{\Phi_m(\mu_m)}{\Phi_m(\mu_m + \im )} \Phi(\mu_m + \im ).
	\end{equation*}
	Since Lemma \ref{avdonin lemma} 
	implies that $(1 + \abs{x})^{-1/\pi} \lesssim \abs{\Phi(x + \im )} \lesssim (1 + \abs{x})^{1/\pi}$, 
	the formula (\ref{avdonin inequality}) follows as soon as we know that
	\begin{equation*}
		 \Abs{\frac{\Phi_m(\mu_m)}{\Phi_m(\mu_m + \im)}} \simeq 1,
	\end{equation*}
	with the implicit constants independent of $m \in \N$.	 But this follows by simply expanding the left hand side
	into an infinite product, and using the properties of the $\mu_k$:
	\begin{equation*}
		 \prod_{k \neq m} \; \hspace{-0.2 cm}'\hspace{0.1 cm} \Abs{\frac{1 - \frac{\mu_m}{\mu_k}}{1 - \frac{\mu_m + \im }{\mu_k} }} = \left(  \prod_{k \neq m}  1 + \frac{1}{(\mu_k - \mu_m)^2}   \right)^{-1/2}\simeq 1.
	\end{equation*}
\end{proof}
We establish some notation.
Let $J_m = \left[ (m-\frac{1}{2}) \frac{\pi}{W}, (m+\frac{1}{2})\frac{\pi}{W} \right)$.
This means that 
\begin{equation*}
	J = \bigcup_{\abs{m} \leq k_0} {J_m} = \left[\lambda_{-k_0} - \frac{\pi}{2W}, \lambda_{k_0} + \frac{\pi}{2W}\right).
\end{equation*}
Since $J_m \cap J_n = \emptyset$, this is a partition of the interval $J$.
\begin{lemma} \label{lemma full of estimates}
	Let $\epsilon >0$. For any $\lambda_k \in \Lambda_w$ and $W>1$,
	\begin{align}
		\label{fourth estimate}
		&\Abs{\prod_{{\lambda_n \in \Lambda_0}} \frac{\lambda_k - \frac{\pi n}{W}}{\lambda_k - \lambda_n}}
		\hspace{-2 cm}
		&\lesssim&  \quad 
		W^{(1+\epsilon)\frac{2W}{\pi} \log 2}. \\
	\end{align}
	Moreover,
	\begin{align}
	\label{fifth estimate}
		&\Norm{\prod_{\lambda_n \in \Lambda_0} \frac{x - \lambda_n}{x-\frac{\pi n}{W}} }_{L^\infty(\R \backslash J)}
		\hspace{-2 cm}
		&\lesssim& \quad 
		W^{(1+\epsilon)\frac{W}{\pi}\log \frac{3^3}{2^4}} 
	\end{align}
		and for $m \in \set{-k_0, \ldots, k_0}$,
		\begin{align}
		\label{sixth estimate}
		&\Bigg\|\prod_{\underset{n \neq m}{\lambda_n \in \Lambda_0}}\frac{x-\lambda_n}{x-\frac{n \pi}{W}}\Bigg\|_{L^\infty( J_m)}
		\hspace{-2 cm}
		&\lesssim& \quad 
		W^{(1+\epsilon)\frac{4W}{\pi}\log 2}.
	\end{align}
	We note that here the constants depend only on $\epsilon$.
\end{lemma}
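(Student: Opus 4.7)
The plan is to treat all three estimates in the same spirit: take the logarithm of the product, split the sum into the two clusters of $\Lambda_0$ (the ``positive'' indices $0\leq n\leq k_0$ and the ``negative'' indices $-k_0\leq n<0$), and compare the resulting sums to explicit one-dimensional integrals whose value determines the polynomial exponent in $W$. The essential geometric input is that, writing $d_n:=\lambda_n-\pi n/W$, the construction of $\Lambda_0$ forces $d_n=\pm \pi(k_0+1/2)/W+\Bigoh{1/W}$ with sign equal to $\mathrm{sgn}(n)$, so that $d_n$ is approximately constant on each sign-component. Combined with $\pi k_0/W=\log(2W)+\Bigoh{1/W}$ and the separation \eqref{onto: separation}, this lets us replace the finite products by smoothed Riemann sums with an $\Littleoh{1}$ error in the logarithm per term, which after multiplying by $2k_0\approx 2W\log(2W)/\pi$ is absorbed into the $(1+\epsilon)$ factor in the stated exponents.

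For \eqref{fourth estimate} I would write each factor as $(1-d_n/(\lambda_k-\pi n/W))^{-1}$ and split the logarithm into sums over $0\le n\le k_0$ and $-k_0\le n<0$. After the change of variable $v=\pi n/W$, the two sums are majorised by integrals of the form $(W/\pi)\int_{0}^{\log 2W}\log\bigl|1-D/(\lambda_k-v)\bigr|\,dv$ with $D\approx \pm\log(2W)$. A direct evaluation of $\int\log|1-D/(y-v)|dv$ in closed form shows that the maximum over $\lambda_k\in\Lambda_w$ is attained as $\lambda_k$ approaches the boundary $\pm\pi k_0/W$, where the two sign-contributions combine to produce a leading constant $2\log 2$, so that the product is bounded by $\exp\{(1+\epsilon)\cdot 2k_0\log 2\}=W^{(1+\epsilon)(2W/\pi)\log 2}$.

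For \eqref{fifth estimate} and \eqref{sixth estimate} the same method applies with $x$ in place of $\lambda_k$. In \eqref{fifth estimate}, $x\in\R\setminus J$ lies outside the convex hull of $\Lambda_0\cup\{\pi n/W:|n|\le k_0\}$, so the integrals are non-singular; the precise geometry, in which the inner edges of the $\Lambda_0$-clusters sit at $\pm\pi(k_0+1/2)/W$ while the outer edges sit near $\pm\pi(2k_0+1/2)/W$, produces the specific constant $\log(3^3/2^4)=3\log 3-4\log 2$ after the closed-form evaluation. In \eqref{sixth estimate}, the absence of the $n=m$ factor removes the near-singular contribution, but one must still carefully estimate the terms with $n$ close to $m$, where $|x-\pi n/W|$ may be as small as $\pi/(2W)$; the separation \eqref{onto: separation} and $d_n\gtrsim\log(2W)$ show that $|x-\lambda_n|\gtrsim\log(2W)$ uniformly, and summing $\log(|x-\lambda_n|/|x-\pi n/W|)$ against the discrete density yields a contribution of order $4\log 2$ per pair of clusters, which is essentially twice that in \eqref{fourth estimate} because here both sign-components contribute with the same sign rather than partially cancelling.

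The main obstacle will be the uniform book-keeping of the $\Bigoh{1/W}$ perturbations $d_n-\text{const}$ and the Riemann-sum-to-integral error, uniformly in $\lambda_k\in\Lambda_w$, in $x\in\R\setminus J$, or in $x\in J_m$ and $m$. In particular in \eqref{sixth estimate} we will need to verify that the factors with $|n-m|$ small contribute only an $\Bigoh{\log W}$ correction to the logarithm (rather than an $\Bigoh{W}$ one, which would destroy the exponent), and this is where the quantitative separation \eqref{onto: separation} and the elementary bound $|x-\lambda_n|\ge d_n-|x-\pi n/W|\gtrsim\log(2W)$ are crucial.
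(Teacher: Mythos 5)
Your approach (take logarithms, approximate the sum over $\Lambda_0$ by a Riemann integral, evaluate the limiting integral in closed form) is genuinely different from the paper's. The paper instead majorizes the product over $x\in J_m$ (resp.\ over $\lambda_k\in\Lambda_w$, resp.\ over $x\in\R\setminus J$) by replacing $x$ factor-by-factor with its worst-case endpoint, reduces the resulting telescoping product to an explicit factorial ratio such as $4\bigl\{(2k_0+1)!/\bigl((k_0+1)!(k_0-1)!\bigr)\bigr\}^2$, and finishes with Stirling's formula and $k_0=[W\log(2W)/\pi]$. That route is purely combinatorial and completely sidesteps the Riemann-sum book-keeping near the quasi-singular terms which you correctly flag as your main obstacle. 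Your route can be made to work, and the three exponents you predict (namely $2\log 2$, $3\log 3-4\log 2$, and $4\log 2$ per unit of $W\log W/\pi$) agree with the paper's, but it requires the extra care you acknowledge.

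There is one concrete inaccuracy in your sketch of \eqref{fourth estimate}: the maximum of the limiting integral is \emph{not} attained as $\lambda_k\to\pm\pi k_0/W$. Writing $L=\log(2W)$ and $y=\lambda_k>L$, the integral equals
\[
\frac{W}{\pi}\Bigl[-2(y-L)\log|y-L|+(y-2L)\log|y-2L|+2(y+L)\log|y+L|-(y+2L)\log|y+2L|\Bigr],
\]
which at $y=L$ takes the \emph{negative} value $-\tfrac{W}{\pi}L\log\tfrac{27}{16}$; the maximum over $y>L$ is attained at the interior point $y=\sqrt{3}\,L$, where the bracket equals exactly $2L\log 2$. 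So the claimed constant is right, but the claimed location and the ``boundary, two sign-contributions combine'' heuristic are not, and as written the derivation is internally inconsistent (plugging in the stated maximizer would give the wrong sign). This is repairable by carrying out the optimization over $y$ correctly, but it is a genuine slip, not just an imprecision. Relatedly, the assertion ``$|x-\lambda_n|\gtrsim\log(2W)$ uniformly'' in your treatment of \eqref{sixth estimate} is only valid for $|n-m|$ bounded (as you use it), and fails for pairs such as $m=k_0$, $n=0$ where $|x-\lambda_n|$ can be $O(1/W)$; fortunately in those cases the corresponding factor is small, so the upper bound is unaffected, but the statement should be restricted accordingly.
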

\begin{proof}
	The arguments for all the inequalities are basically the same, so we only give the one for (\ref{sixth estimate}).
	We first consider the case $m=0$.
	\begin{multline*}
		\Bigg\|\prod_{\underset{n \neq m}{\lambda_n \in \Lambda_0}}\frac{x-\lambda_n}{x-\frac{n \pi}{W}}\Bigg\|_{L^\infty( J_m)}
		\leq
		\prod_{\underset{n>0}{\lambda_n \in \Lambda_0}} \frac{\lambda_n-\frac{\pi}{2W}  }{  \frac{\pi}{W}n-\frac{\pi}{2W} }
		\prod_{\underset{n<0}{\lambda_n \in \Lambda_0}}\frac{|\lambda_n|-\frac{\pi}{2W}  }{  \frac{\pi}{W}|n|-\frac{\pi}{2W} }
		\\
		\leq
		\prod_{n=1}^{k_0} \left(\frac{ n + k_0 + 1/2}{n - \frac{1}{2}} \right)^2
		\leq
		4 \left\{ \frac{(2k_0+1)!}{(k_0+1)! (k_0-1)!} \right\}^2
	\end{multline*}
	By Stirling's formula this is smaller than some constant times $\e^{(1+\epsilon) 4k_0\log 2 }$.
	Since $k_0 = \left[ W \log 2W/\pi\right]$, we obtain the desired inequality.
	Elementary considerations imply that 
	the largest value basically corresponds to the case $m=0$. Hence, the inequality follows for $m \in \set{-k_0, \ldots, k_0}$.
\end{proof}

We now resume the proof of the inequality (\ref{hard inequality}). 
The first thing to notice is that the factor $h$ allows us to use the Cauchy-Schwarz inequality.
This is essentially why we constructed the set $\Lambda$ to be over-sampling for $L^2(-T,T)$. 
We get
\begin{multline} \label{second F1 estimate}
	 \norm{F_w}^2_{L^2(\R)}
	 =
	 \int_\R \Abs{ \sum_{\lambda \in \Lambda_w} \hat{f}(\lambda) h(x-\lambda) \frac{G(x)}{G'(\lambda) (x-\lambda)}  }^2 \dif x \\
	 \leq
	 	 \int_\R  \sum_{\lambda \in \Lambda_w} \abs{ \hat{f}(\lambda)}^2 \abs{h(x-\lambda)} \Abs{ \frac{G(x)}{G'(\lambda) (x-\lambda)} }^2 \times \sum_{\mu \in \Lambda_w}  \abs{h(x-\mu)} \dif x \\
		 \leq
	 	 \Norm{\sum_{\mu \in \Lambda_w}  h(x-\mu) }_{L^\infty(\R)}   \sum_{\lambda \in \Lambda_w} \abs{ \hat{f}(\lambda)}^2 
		 \underbrace{\int_\R \abs{h(x-\lambda)} \Abs{ \frac{G(x)}{G'(\lambda) (x-\lambda)} }^2 \dif x}_{(I)}.
\end{multline}
It is readily checked that the factor outside of the sum is less than some absolute constant, so it only remains to deal with (I). For $\lambda_k \in \Lambda_w$, we get
by expanding $G(x)$ in terms of $S(x)$, and using the inequality (\ref{fourth estimate}), that
\begin{multline} 
	(I)
	= \int_\R  \Abs{ \frac{S(x)}{S'(\lambda_k) (x-\lambda_k)} }^2 
	\abs{h(x-\lambda_k)}
	\Abs{ \prod_{\lambda_n \in \Lambda_0} \frac{x - \lambda_n}{x - \frac{\pi n}{W}} 
	\prod_{\lambda_n \in \Lambda_0} \frac{\lambda_k-  \frac{\pi n}{W} }{\lambda_k - \lambda_n} 
	}^2
	\dif x 
	\\
	\lesssim
	W^{(1+\epsilon)\frac{4W}{\pi} \log 2 }
	\underbrace{\int_\R  \Abs{\frac{S(x)}{S'(\lambda_k) (x-\lambda_k)}}^2 \abs{h(x-\lambda_k)}
	\prod_{\lambda_n \in \Lambda_0} \Abs{\frac{x- \lambda_n}{ x - \frac{\pi n}{W} }}^2 \dif x}_{(II)}.
\end{multline}
Here and below, the inequalities are valid for $W>1$ and the implicit constants depend on $\epsilon$.
Recall that $J_m = \Big[(m-1/2)\frac{\pi}{W}, (m+1/2) \frac{\pi}{W} \Big)$ and $J = \cup_{m=-k_0}^{k_0} J_m$. Then 
\begin{multline*}
	 (II)  = 
	 \underbrace{\int_{\R \backslash \cup J_m} \Abs{\frac{S(x)}{S'(\lambda_k) (x-\lambda_k)}}^2 \abs{h(x-\lambda_k)} \prod_{\lambda_n \in \Lambda_0} \Abs{\frac{x - \lambda_n}{ x- \frac{\pi n}{W} }}^2 \dif x}_{(III)}
	 \\
	 +
	 \sum_{\abs{m} \leq k_0} \Abs{ \frac{S'(\frac{\pi m}{W} ) }{S'(\lambda_k)} }^2 \underbrace{ \hspace{0 cm}\int_{J_m} \Abs{\frac{S(x)}{S'(\frac{\pi m}{W} )(x- \frac{\pi m}{W} )}}^2 
	 \times \abs{h(x-\lambda_k)} \Bigg|\frac{x - \lambda_m}{x-\lambda_k} \prod_{\underset{n\neq m}{\lambda_n \in \Lambda_0}} \frac{x - \lambda_n}{x-\frac{\pi n}{W} } \Bigg|^2  \dif x}_{(IV)}.
\end{multline*}
By the bound $\abs{h(x)} \leq 1$, and the inequalities    \eqref{riesz basis inequality} and \eqref{fifth estimate}, this implies that $(III) \lesssim W^{(1+\epsilon)\frac{2W}{\pi}\log \frac{3^3}{2^4}}$. 
It is readily checked that for $T\geq 1$ and $\abs{k} > k_0$ one has $\norm{h(x-\lambda_k)}_{L^\infty(J)} \lesssim (\abs{k} - k_0)^{-2}$ for
some constant that depends only on $\epsilon$. Using this, in addition to the estimates \eqref{riesz basis inequality},  \eqref{avdonin inequality} and \eqref{sixth estimate}, we get
\begin{eqnarray*}
	 (IV)& \leq&  \frac{\norm{h(x-\lambda_k)}_{L^\infty(\cup J_m)}}{\abs{S'(\lambda_k)}^2} 
	  	 \sum_{\abs{m} \leq k_0} \left(\abs{S'(\frac{\pi m}{W})}^2 \Norm{\frac{x - \lambda_n}{x-\lambda_k}  }_{L^\infty(J_m)}\right.
	 \\
	 &&\times \;
		\Bigg\| \prod_{\underset{\lambda_n \in \Lambda_0}{ n\neq m}} \frac{x - \lambda_n}{x-\frac{\pi n}{W}} \Bigg\|^2_{L^\infty(J_m)}
	\left.   \int_\R \Abs{\frac{S(x)}{S'(\frac{\pi m}{W}) (x-\frac{\pi m}{W})}}^2 \dif x\right)
	 \lesssim
	 W^{(1+\epsilon) \frac{8W}{\pi} \log 2}.
\end{eqnarray*}
By combining the inequalities for (I)-(IV) with \eqref{second F1 estimate}, we get for $W>1$ the estimate
\begin{equation*}
	 	\norm{F_w}^2 \lesssim W^{(1+\epsilon )\frac{12W}{\pi} \log 2} \sum_{\lambda_k \in \Lambda} \abs{\hat{f}(\lambda_k)}^2.
\end{equation*}
The implicit constant depends on $\epsilon$. By using the fact that $\varepsilon >0$ is arbitrary, the relation $W = (1+{\epsilon})T$, and the simple inequality $(x(1+\varepsilon ))^{x(1+\varepsilon )}\leq x^{x(1+2\varepsilon )}$, valid for large enough $x$ and small enough
$\epsilon >0$, the  inequality \eqref{Fw estimate} follows.

\section{Proof of Theorem \ref{sobolev matching real parts}} \label{section: mccarthy spaces}

In order to proceed, we define the  Sobolev space $W^\alpha(I)$ on an interval $I$.  For $\alpha \in \R$, let $w_\alpha(\xi) = (1 + \abs{\xi}^2)^{\alpha/2}$ and denote the space of tempered distributions by $\mathcal{S}'(\R)$. The \emph{unrestricted} Sobolev space is given by
\begin{equation*}
	 W^\alpha(\R) = \Set{ u \in \mathcal{S}'(\R) : \; \hat u\in L^2_{\mathrm{loc}}\; \mbox{and}\; \| u\|_{W^\alpha}:=\int_\R \abs{\hat{u}(\xi)}^2 w_{2\alpha}(\xi) \dif \xi < \infty}.
\end{equation*}
For an open and (possibly unbounded) interval $I \subset \R$ , we let $W^\alpha_0(I)$ be the subspace of $W^\alpha(\R)$ that consists of distributions having support in $I$. By a scaling and mollifying argument one easily checks that this subspace coincides with the closure of $\mathcal{C}^\infty_0(I)$ in the norm of $W^\alpha(\R)$. The definition of $W^\alpha_0(I)$ remains unchanged if $I$ is a finite union of separated intervals.

With this, we define the  Sobolev space  
\begin{equation*}
	 W^\alpha(I) := W^\alpha(\R)/ W^{\alpha}_0(\R \backslash \bar{I}^C).
\end{equation*}
In other words, the quotient space $W^\alpha(I)$  contains the restrictions of distributions in $W^\alpha(\R)$ to the interval $I$ with the norm
\begin{equation*}
	\norm{u}_{W^\alpha(I)}  =  \inf_{\underset{v|_I = u}{v \in W^\alpha(\R)}} \norm{v}_{W^\alpha(\R)}.
\end{equation*}
Under the natural pairing $(u,v) = \int_\R \hat{u}(\xi) \hat{v}(\xi) \dif \xi$, 
the dual space of $W^\alpha(I)$ is isometric to $W^{-\alpha}_0(I)$, as is readily verified.
It is well-known that the functions  in the spaces $D_{\alpha}(\C_{1/2})$ have distributional boundary values that belong to the Sobolev spaces $W^{\alpha/2}(I)$ on bounded and open intervals $I \subset \R$. By the local embeddings, the same holds true for the spaces
\begin{equation*}
	 \Hp^2_\alpha = \Set{\sum_{n \in \N} a_n n^{-s} : \sum_{n \in \N} \abs{a_n}^2 w_{\alpha} (\log n) < \infty}.
\end{equation*}
Closely related to the space $\Hp^2_\alpha$ is the sequence  space $\ell^2_\alpha(\Z^\ast)$. We define it to be the sequences of complex numbers $(a_n)$ finite in the norm
\begin{equation*}
	 \norm{(a_n)}_{\ell^2_\alpha} = \Big( \sum_{n \in \N}  {\abs{a_n}^2 w_{\alpha}(\log n) + \abs{a_{-n}}^2 w_{\alpha}(-\log n)} \Big)^{\frac{1}{2}}.
\end{equation*}
The following  is analogue to Lemma \ref{RI is onto}. Recall that
\begin{equation*}
	 R_I : (a_n)_{n \in \Z^\ast} \longmapsto  \left( \sum_{n \in \N} \frac{a_n n^{-\im t} + a_{-n}n^{\im t}}{\sqrt{n} } \right) \Bigg|_I.
\end{equation*}
\begin{lemma} \label{onto: sobolev RI is onto}
	The operator $R_I : \ell^2_\alpha \longrightarrow W^{\alpha/2}(I)$,
	originally defined only on finite sequences, extends to a bounded and onto operator.
\end{lemma}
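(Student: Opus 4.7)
The plan is to follow the blueprint of Lemma \ref{RI is onto}: boundedness will come from the local embeddings of Lemma \ref{onto: sobolev embedding lemma}, and surjectivity will reduce, via Lemma \ref{frame lemma}, to showing that $R_I^* : W^{-\alpha/2}_0(I) \to \ell^2_{-\alpha}$ is bounded below, which I would then derive through the same $\delta$-approximation plus compact-perturbation strategy as in Subsection \ref{case2}.

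For boundedness, given a finite sequence $(a_n)_{n \in \Z^*} \in \ell^2_\alpha$, I would decompose
\[
R_I(a_n) = G_+\bigl(\tfrac{1}{2} + \im t\bigr)\big|_I + \overline{G_-\bigl(\tfrac{1}{2} + \im t\bigr)}\big|_I,
\]
where $G_+(s) = \sum_{n \geq 1} a_n n^{-s}$ and $G_-(s) = \sum_{n \geq 1} \overline{a_{-n}} n^{-s}$ belong to $\Hp^2_\alpha$, with norms controlled by the respective one-sided $\ell^2_\alpha$-norms of the coefficient sequence. The local embeddings of Lemma \ref{onto: sobolev embedding lemma}, combined with standard boundary trace theorems for the weighted Bergman spaces ($\alpha<0$) and Dirichlet-type spaces ($\alpha>0$) on $\C_{1/2}$, land each trace in $W^{\alpha/2}(I)$ with norm controlled by the $\Hp^2_\alpha$ norm. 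Summing the two contributions yields $\|R_I(a_n)\|_{W^{\alpha/2}(I)} \lesssim \|(a_n)\|_{\ell^2_\alpha}$.

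For surjectivity, Lemma \ref{frame lemma} reduces matters to bounding $R_I^*$ from below. A distributional pairing computation, analogous to the one giving $R_I^*$ in the $\alpha = 0$ case, produces
\[
R_I^* g = \bigl(|n|^{-1/2}\,\hat g(-\mathrm{sgn}(n)\log|n|)\bigr)_{n \in \Z^*}, \qquad g \in W^{-\alpha/2}_0(I),
\]
so the desired bound becomes the weighted sampling inequality
\[
\int_\R w_{-\alpha}(\xi)\,|\hat g(\xi)|^2\,\dif\xi \;\lesssim\; \sum_{n \geq 1} \frac{w_{-\alpha}(\log n)}{n} \bigl(|\hat g(\log n)|^2 + |\hat g(-\log n)|^2\bigr)
\]
for $g$ compactly supported in $\bar I$. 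My plan to prove this is to reproduce the $\alpha=0$ argument: approximate by $R_{I,\delta}$ and use the splitting $\zeta(s) = (s-1)^{-1} + \psi(s)$ to write $R_I R_I^*$ as a Poisson-kernel-type main term (which after Parseval corresponds to multiplication by a bounded-below symbol) plus convolution with the smooth kernel $\phi(t) = \Re \psi(1 + \im t)$. The convolution piece, being smoothing of arbitrary order on the bounded set $I$, defines a compact operator between the Sobolev spaces in play. The Second Stability Theorem (Lemma \ref{second stability theorem}) then reduces bounded-belowness to injectivity of $R_I^*$, which follows exactly as in Lemma \ref{RI is onto}: a compactly supported $g$ whose Fourier transform vanishes on $\pm\log\N$ must be zero, since $\hat g$ is entire of exponential type $\leq |I|/2$ and $\pm\log\N$ has density exceeding this type by \cite[Theorem~3, p.~61]{young01}.

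The hard part will be ensuring that the decomposition of $R_I R_I^*$ as "bounded-below main term plus compact perturbation" survives the passage from the self-dual $L^2(I)$ setting used in the $\alpha = 0$ case to the truly dual Sobolev spaces $W^{-\alpha/2}_0(I)$ and $W^{\alpha/2}(I)$. Compactness of the convolution error should not itself be problematic, since its kernel is smooth on a bounded interval and thus gains arbitrary Sobolev regularity; but correctly identifying the leading "$2\pi g$" term with a bounded-below duality isomorphism at the Sobolev level, uniformly in the parameter $\alpha$, is the delicate book-keeping step on which the argument rests.
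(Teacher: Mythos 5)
Your plan for surjectivity, namely reproducing the $\alpha=0$ argument by writing the frame operator as a Poisson-type main term plus a compact smoothing perturbation, does not survive the passage to the Sobolev duality, and the paper explicitly avoids that route here. The difficulty you flag in your last paragraph is not mere book-keeping: to compose $R_I$ with $R_I^*$ as a map $W^{-\alpha/2}_0(I)\to W^{\alpha/2}(I)$ you must insert the Riesz isomorphism $(a_n)\mapsto (w_{-\alpha}(\log|n|)a_n)$ between $\ell^2_{-\alpha}$ and $\ell^2_\alpha$, and the resulting operator is restricted convolution with a kernel whose symbol is a \emph{weighted} zeta series $\sum_n w_{-\alpha}(\log n)\,n^{-1\mp \im t}$. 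Its singularity at the origin is of order $|t|^{\alpha-1}$, not $|t|^{-1}$, so the decomposition $\zeta(s)=(s-1)^{-1}+\psi(s)$ no longer produces an identity plus a smoothing error; the leading term is a genuine order-$\alpha$ Riesz-potential-type operator, and to apply Lemma \ref{second stability theorem} you would need a new singular splitting of this weighted zeta together with a proof that the main term is a bounded-below isomorphism between the two Sobolev spaces modulo compact. That analysis is absent from your proposal.

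The paper instead recasts the claim as the two-sided Riemann-sum estimate of Lemma \ref{sobolev frame lemma}: one compares $\sum_n n^{-1}w_\alpha(\log n)|\hat f(\pm\log n)|^2$ directly with $\int_\R w_\alpha(\xi)|\hat f(\xi)|^2\,\dif\xi$, controls the discretisation error by $\norm{tf}_{W^{\alpha/2}_0(I)}$ via the pointwise inequality \eqref{onto: not so useless estimate II}, and closes the possibly-negative lower bound with Lemma \ref{onto: subspace lemma} --- an abstract Hilbert-space substitute for semi-Fredholm theory that only asks for bounded-belowness on a finite-codimension subspace and hence needs no compactness of any error term. The upper bound of the same lemma gives boundedness directly, with no appeal to Lemma \ref{onto: sobolev embedding lemma} or to external trace theorems; your boundedness step leans on such uncited auxiliary results where the paper has a short self-contained computation. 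So the proposal has a genuine gap at the surjectivity step, and its boundedness step is weaker than what is actually needed.
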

Let $R_I^\ast$ denote the adjoint operator of $R_I$ with respect to the natural pairing of the Sobolev spaces and  of  $\ell^2_\alpha$.
We remark that Lemma \ref{onto: sobolev RI is onto} says that
$R_I : \ell^2_\alpha \longrightarrow W^{\alpha/2}(I)$
is both bounded and surjective.
This is equivalent to saying that the operator $R_I^\ast : W^{-\alpha/2}_0(I) \rightarrow \ell^2_{-\alpha}$ is bounded and bounded below in norm. Since
\begin{equation*}
	 \norm{R_I^\ast g}_{\ell^2_{-\alpha}}^2 = 	 	\sum_{n \in \N} \frac{\abs{\hat{g}(\log n)}^2w_{-\alpha}(\log n) + \abs{\hat{g}(-\log n)}^2w_{-\alpha}(\log n)}{n},
\end{equation*}
we note that Lemma \ref{onto: sobolev RI is onto} may be formulated as follows (observe that we have changed $\alpha$ to $-\alpha$
for notational convenience).
\begin{lemma} \label{sobolev frame lemma}
	Let $I \subset \R$ be a bounded interval and $\alpha\in\R.$
	Then there exist constants, depending only on the length of $I$, such that for any $f\in W_0^{\alpha /2}$ one has
	\begin{equation} \label{sobolev frame inequalities}
	 	\sum \frac{\abs{\hat{f}(\log n)}^2w_\alpha(\log n) + \abs{\hat{f}(-\log n)}^2w_\alpha(\log n)}{n}  \simeq
		\norm{f}^2_{W^{\alpha/2}_0(I)}.
	\end{equation}
\end{lemma}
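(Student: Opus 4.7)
The plan is to deduce the weighted frame equivalence \eqref{sobolev frame inequalities} from the unweighted Lemma \ref{frame theorem} in three stages: first non-negative even integer exponents, then complex interpolation to cover all $\alpha \geq 0$, and finally duality for $\alpha < 0$.

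For $\alpha = 2k$ with $k \geq 0$ an integer, the weight $w_{2k}(\xi) = (1+\xi^2)^k$ is a polynomial in $\xi^2$, so by Parseval
\begin{equation*}
  \|f\|^2_{W^{k}_0(I)} \;\simeq\; \sum_{j=0}^{k} \|f^{(j)}\|^2_{L^2(\R)}.
\end{equation*}
Each distributional derivative $f^{(j)}$ remains supported in $\bar I$, so Lemma \ref{frame theorem} applies to $f^{(j)} \in L^2(\bar I)$. Using $\widehat{f^{(j)}}(\xi) = (i\xi)^j \hat f(\xi)$ gives
\begin{equation*}
  \sum_{n\geq 1} \frac{(\log n)^{2j}\bigl(|\hat f(\log n)|^2 + |\hat f(-\log n)|^2\bigr)}{n} \;\simeq\; \|f^{(j)}\|^2_{L^2}.
\end{equation*}
Summing over $j = 0,\ldots,k$ and using $\sum_{j=0}^k x^{2j} \simeq (1+x^2)^k$ yields \eqref{sobolev frame inequalities} at $\alpha = 2k$.

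For general $\alpha \geq 0$, I would then apply complex interpolation. The scales $\{W^{s}_0(I)\}_{s \geq 0}$ and $\{\ell^2_{2s}\}_{s \geq 0}$ are Calder\'on interpolation scales, so the boundedness of the sampling map $R_I^*: f \mapsto \bigl(\hat f(\operatorname{sgn}(n)\log|n|)/\sqrt{|n|}\bigr)_{n \in \Z^\ast}$ proved above at integer levels extends to all $s \geq 0$; this gives the upper bound in \eqref{sobolev frame inequalities}. For the lower bound, note that at the $L^2(I)$ level the frame operator is $2\pi\,\mathrm{Id} + K$ with $K$ compact and $R_I R_I^*$ injective (formula \eqref{harmonic embedding as convolution formula} and the proof of Lemma \ref{RI is onto}); since $\phi$ is the restriction to the line $\sigma=1$ of an entire function, the same Fredholm decomposition survives on every $W^{k}_0(I)$, giving invertibility of the frame operator at each integer level, and complex interpolation of invertible positive self-adjoint operators on the Hilbert scale transfers invertibility to all intermediate $s$. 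For $\alpha < 0$, the result follows by a dual argument: applying Stage 1 to the representation of $W^{-k}_0(I)$-distributions as finite sums $f = \sum_j u_j^{(j)}$ with $u_j \in L^2(\bar I)$, together with the duality $(W^{s}_0(I))^\ast \cong W^{-s}(I)$ noted in the preliminaries, converts the positive-exponent frame equivalence into the negative-exponent one.

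The hardest part will be the lower bound at non-integer $\alpha$, since neither surjectivity nor coercive lower bounds interpolate in general. The resolution above hinges on recasting the lower bound as invertibility of the positive self-adjoint frame operator on a Hilbert scale of Sobolev spaces, where a compact perturbation of a multiple of the identity that is invertible at both endpoints remains invertible throughout the scale. This reduces to a technical check that $g \mapsto \chi_I(g \ast \phi)$ is compact from $W^{-k/2}_0(I)$ to itself for each integer $k$, which in turn follows from the Schwartz regularity of $\phi$ inherited from the entire function $\psi$ in $\zeta(s) = (s-1)^{-1} + \psi(s)$.
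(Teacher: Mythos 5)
Your Stage~1 is a clean observation and does work: for $f\in W^{k}_0(I)$ each $f^{(j)}$ ($0\leq j\leq k$) is again supported in $\bar I$ and lies in $L^2$, Lemma~\ref{frame theorem} applies to it, and summing over $j$ with $\sum_{j=0}^k(\log n)^{2j}\simeq w_{2k}(\log n)$ and $\sum_j\|f^{(j)}\|_2^2\simeq\|f\|^2_{W^k_0(I)}$ gives \eqref{sobolev frame inequalities} at $\alpha=2k\geq 0$. This is a genuinely different route from the paper, which never invokes Lemma~\ref{frame theorem} but instead compares the logarithmic Riemann sum directly to the weighted $L^2$ integral of $\hat f$, controls the error by the derivative $\hat f'$ (i.e.\ by $\|tf\|_{W^{\alpha/2}_0(I)}$), and closes the lower bound by restricting to a finite-codimension subspace where the error is small and then invoking Lemma~\ref{onto: subspace lemma}; that argument treats all real $\alpha$ at once with no interpolation or duality.

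Your Stages~2 and 3, however, have a real gap at the point you yourself flagged as hardest. The recasting of the lower bound as invertibility of a positive self-adjoint frame operator does not set up correctly away from $\alpha=0$. The adjoint $R_I^\ast$ is taken with respect to the \emph{unweighted} pairing, so $R_IR_I^\ast$ acts $W^{-\alpha/2}_0(I)\to W^{-\alpha/2}(I)$ and its quadratic form is $\langle R_IR_I^\ast g,g\rangle=\|R_I^\ast g\|^2_{\ell^2}$, i.e.\ the unweighted $\ell^2$ norm. The quantity you need to bound from below is $\|R_I^\ast g\|^2_{\ell^2_{-\alpha}}$, which carries the weight $w_{-\alpha}(\log n)$ and is \emph{not} the quadratic form of $R_IR_I^\ast$. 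Thus invertibility of $2\pi\,\mathrm{Id}+K$ on the scale, even granted, does not control the weighted lower bound; inserting the weight into the frame operator destroys the convolution formula \eqref{harmonic embedding as convolution formula} (one would convolve against $\Re\sum_n w_{-\alpha}(\log n)n^{-1-it}$, which for $\alpha\leq 0$ is not even a convergent series), so the ``$2\pi\,\mathrm{Id}+\text{compact}$'' structure is lost. There are additional smaller problems: $\phi(t)=\Re\psi(1+\im t)$ is smooth but \emph{not} Schwartz --- $\zeta(1+\im t)$ grows like $\log|t|$, so $\phi$ is unbounded; what saves the compactness in the paper is only that $\phi$ is bounded on the compact set $\{t-\tau:t,\tau\in I\}$, not any global decay. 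Also, the interpolation identity $[W^{s_0}_0(I),W^{s_1}_0(I)]_\theta=W^s_0(I)$ fails at $s\in\tfrac12+\Z$ for the ``supported-in-$\bar I$'' scale, so even the upper-bound interpolation needs a comment. Finally, for $\alpha<0$ the duality $(W^{s}_0(I))^\ast\cong W^{-s}(I)$ merely translates the lemma at $\alpha$ into the equivalent surjectivity statement (Lemma~\ref{onto: sobolev RI is onto}) at $-\alpha$; it does not give a new range of $\alpha$. And the representation $f=\sum_j u_j^{(j)}$ with all $u_j\in L^2(\bar I)$ is doubtful for distributions like $\delta_a'$ concentrated at an endpoint $a\in\partial I$; without it Stage~3 does not get off the ground. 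In short: Stage~1 is a nice alternative entry point, but the passage to non-integer and negative $\alpha$ as written does not go through, whereas the paper's direct mean-value/finite-codimension argument avoids the issue entirely.
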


In place of Lemma \ref{second stability theorem} from Semi-Fredholm theory,  we will use the following more crude result, which we prove for the readers convenience
	\begin{lemma}\label{onto: subspace lemma}
   		Let $H,K$ be Hilbert spaces and $A : H \rightarrow K$ be a
    	bounded and injective operator. If there exist a subspace
    	$M \subset H$ of finite co-dimension, and a constant $C>0$
		such that $\norm{Af} \geq C \norm{f}$ for all $f \in M$,
    	then $A$ is bounded below in norm on $H$.
	\end{lemma}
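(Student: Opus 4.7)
The plan is to argue by contradiction using a compactness argument in the finite-dimensional complement of $M$.

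First, I would reduce to the case where $M$ is closed: since the inequality $\norm{Af} \geq C\norm{f}$ extends by continuity of $A$ to the closure $\overline{M}$, and $\overline{M}$ still has finite co-dimension, we may assume $M$ is closed. Write $H = M \oplus N$ as an orthogonal direct sum, where $N = M^\perp$ is finite-dimensional. Note that $A(M)$ is closed in $K$ (bounded-below images of closed subspaces are closed), and that $A\big|_N$ is an isomorphism from $N$ onto its finite-dimensional image, since $A$ is injective.

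Now suppose, for a contradiction, that $A$ is not bounded below on $H$. Then there exists a sequence $\seq{f_n} \subset H$ with $\norm{f_n} = 1$ and $\norm{A f_n} \to 0$. Decompose $f_n = m_n + v_n$ with $m_n \in M$ and $v_n \in N$. Since $\norm{v_n} \leq 1$ and $N$ is finite-dimensional, we may pass to a subsequence (not relabelled) so that $v_n \to v$ for some $v \in N$ with $\norm{v} \leq 1$. Consequently $\norm{m_n}$ stays bounded and
\begin{equation*}
    A m_n = A f_n - A v_n \longrightarrow -Av.
\end{equation*}
Since $A\big|_M$ is bounded below by $C$, the sequence $\seq{m_n}$ is Cauchy in $M$, hence converges to some $m \in M$. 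By continuity of $A$ we obtain $A(m+v) = 0$, and injectivity of $A$ gives $m + v = 0$. As $m \in M$ and $v \in N$ with $M \cap N = \{0\}$, we conclude $m = v = 0$, so $f_n \to 0$, contradicting $\norm{f_n} = 1$.

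The argument is essentially routine once the finite-dimensional complement is in place; the only subtlety is checking that one genuinely has a closed, bounded-below $A\big|_M$ (handled by replacing $M$ by $\overline{M}$) so that the Cauchy argument for $m_n$ goes through. No further machinery beyond finite-dimensional compactness and the closed range theorem for bounded-below operators is needed.
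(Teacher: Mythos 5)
Your proof is correct and follows essentially the same approach as the paper: decompose $H=M\oplus M^\perp$, use finite dimensionality to extract a convergent subsequence of the $M^\perp$-components, and use the lower bound on $M$ to make the $M$-components Cauchy. The only difference is presentational — you spell out the closure reduction and the final injectivity step (needed to turn convergence of $f_n$ into an actual contradiction) a bit more explicitly than the paper does.
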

	\begin{proof}[Proof of lemma \ref{onto: subspace lemma}]
	Assume that there exists a sequence of vectors $\seq{f_n}$ such that $\norm{f_n} \equiv 1$ and $\norm{Af_n} \leq n^{-1}$. We obtain a contradiction by showing that the sequence $\seq{f_n}$ converges in norm.

	Let $P_M$ and $P_{M^\perp}$ be the orthogonal projections onto $M$ and $M^\perp$, respectively.
	Since $M^\perp$ is of finite dimension, we assume that $P_{M^\perp} f_n$ converges to some vector $g \in H$. 
	Moreover, by the triangle inequality,
	\begin{equation*}
		\norm{AP_M f_n + Ag} \leq \frac{1}{n} + \norm{A(P_{M^\perp}f_n - g)}.
	\end{equation*}
 	It follows that $AP_M f_n$ converges to $-Ag$, and in particular $\seq{AP_M f_n}$ has to be a Cauchy sequence.
	The final step is to observe that the lower boundedness of $A$ on $M$ implies
	\begin{equation*}
		\norm{AP_M (f_n - f_m)} \geq C \norm{P_Mf_n - P_Mf_m} 
	\end{equation*}
	for some $C>0$.
	I.e., $\seq{P_Mf_n}$ is a Cauchy sequence. And so, since $f_n = P_M f_n + P_{M^\perp} f_n$, we get that $\seq{f_n}$ is a Cauchy sequence, as was to be shown.	
	\end{proof}
	We are now ready to prove the lemma.
	\begin{proof}[Proof of Lemma \ref{sobolev frame lemma}]
	It is enough to prove the relation 
	\eqref{sobolev frame inequalities}.
	For $f \in C^\infty_0(I)$, it is clear that this expression converges. 
	Since $n^{-1} \log 2 \leq \log((n+1)/n)$ and $w_\alpha(\log n) \leq w_\alpha(\xi)$ for $\xi \in (\log n, \log n+1)$, it follows that the left hand side of \eqref{sobolev frame inequalities} is less than
	the constant $1/\log 2$ times
	\begin{equation*}
		\sum_{n \in \N} \left( \int_{\log n}^{\log n+1}\abs{\hat{f}(\log n)}^2w_\alpha(\xi) \dif \xi + \int_{-\log n+1}^{-\log n}\abs{\hat{f}(-\log n)}^2w_\alpha(\xi) \dif \xi \right).
	\end{equation*}
	Adding and substracting by $\hat{f}(\xi) w_{\alpha}(\e^\xi)$ within the absolute value signs, and
	using the inequality $\abs{x+y}^2 \leq 2(\abs{x}^2 +  \abs{y}^2)$, shows that this again is smaller 
	than the constant $2$ times
	\begin{multline} \label{onto: inequality: weighted embedding}
		\int_\R \abs{\hat{f}(\xi)}^2 w_\alpha(\xi) \dif \xi 
		+
		\sum_{n \in \N} \left( \int_{\log n}^{\log n+1}\abs{\hat{f}(\xi) - \hat{f}(\log n)}^2w_\alpha(\xi) \dif \xi \right. \\ + \left. \int_{-\log n+1}^{-\log n}\abs{\hat{f}(\xi) - \hat{f}(-\log n)}^2 w_\alpha(\xi) \dif \xi \right).
	\end{multline}
	The first term is simply
		$\norm{f}_{W^{\alpha/2}_0}^2$.
	We need to show that the second term is also controlled by this norm. 
	By expanding the Fourier transform, we find that
	\begin{equation}
	\label{onto: not so useless estimate II}
		\abs{\hat{f}(\log n) - \hat{f}(\xi)}^2
		=
		\Big|\int_{\log n}^\xi \hat{f}'(\tau) \dif \tau\Big|^2
		\lesssim  \frac{1}{n \; w_\alpha(\log n)} \int_{\log n}^{\log n +1} \abs{\hat{f}'(\tau)}^2 w_\alpha(\tau) \dif \tau,
	\end{equation}
	Inserting this into the last term of \eqref{onto: inequality: weighted embedding}
	yield the upper bounds 
	\begin{multline*}
		\sum_{n \in \N} \frac{1}{n} \left(\int_{\log n}^{\log n+1} \abs{\hat{f}'(\tau)}^2 w_\alpha(\tau) \dif \tau
		 + \int_{-\log n}^{-\log n+1} \abs{\hat{f}'(\tau)}^2 w_\alpha(\tau) \dif \tau\right)
		 \\
		 \leq
		 \int_\R \abs{\hat{f}'(\tau)}^2 w_\alpha(\tau) \dif \tau
		 =  \norm{t f}^2_{W_0^{\alpha/2}(I)}.
	\end{multline*}
    The last equality follows by the rule for differentiating a Fourier transform 
    and the definitions of the norms.  Since multiplication by $t$ is continuous
    on $W^{\alpha}_0(I)$ (choose $\phi\in C_0^\infty (\R)$ with $\phi(t)=t$ in a neighbourhood of $I$, and observe that multiplication by
    $\phi$ in continuous on $W^{\alpha/2}(\R )$) it follows that $\norm{tf}_{W^\alpha_0(I)} \lesssim \norm{f}_{W^{\alpha/2}_0(I)}$. 
    This proves the upper inequality.

   We turn to the lower inequality. We need to be slightly more careful. 
    By the same argument as above, we find that the left hand side of \eqref{sobolev frame inequalities} is greater than
    \begin{equation*}
		\norm{f}_{W^{\alpha/2}_0(I)}^2 - C_\alpha \norm{tf}_{W^{\alpha/2}_0(I)}^2.
	\end{equation*}
	However, for general $\alpha$ and $I$ this leaves us with something negative.
	The solution is 
	for some sufficiently large $N \in \N$ to leave the terms with $\abs{n} < N$ out of the sum 
	on the left hand side of \eqref{sobolev frame inequalities}. This only makes the sum smaller,
	and applying again inequality (\ref{onto: not so useless estimate II}) as before, \eqref{sobolev frame inequalities} is seen to be greater than
	\begin{eqnarray*}
		&&\norm{f}^2_{W^\alpha_0(I)} - \int_{-\log N}^{\log N}  \abs{\hat{f}(\xi)}^2 w_\alpha(\xi) \dif \xi
		\\&& -
		        C_\alpha \sum_{\abs{n} \geq N} \frac{1}{n} \left( \int_{\log n}^{\log n+1}  \abs{\hat{f}'(\tau)}^2 w_\alpha(\tau)^2 \dif \tau \right. + \left. \int_{-\log n+1}^{-\log n} \abs{\hat{f}'(\tau)}^2 w_\alpha(\tau)^2 \dif \tau \right).
	\end{eqnarray*}
	By the continuity of 
	multiplication by the independent variable,
	given any $\epsilon >0$, we may choose $N$ large enough so that this is greater than
	\begin{equation*}
		(1- \epsilon) \norm{f}^2_{W^\alpha_0(I)} - \int_{-\log N}^{\log N}  \abs{\hat{f}(\xi)}^2 w_\alpha(\xi) \dif \xi.
	\end{equation*}

	Next, we explain how to use Lemma \ref{onto: subspace lemma} to conclude.
	The lemma says that it is sufficient to find a subspace $\mathcal{M} \subset W^{\alpha/2}_0(I)$ with finite co-dimension such that for all $f \in \mathcal{M}$ we have
	\begin{equation} \label{onto: blabla}
		 \int_{-\log N}^{\log N}  \abs{\hat{f}(\xi)}^2 w_\alpha(\xi) \dif \xi \leq \frac{1}{2} \norm{f}^2_{W^{\alpha/2}_0(I)}.
	\end{equation}
	For $\eta > 0$ and $K > {1}/{2\eta}$, choose a finite sequence of strictly increasing real numbers $(\xi_n)_{n=1}^{K+1}$ such that $\xi_1 =   -\log N$, $\xi_{K+1} = \log N$ and $\inf_{n \in J} \abs{\xi - \xi_n} < \eta$. We set
	\begin{equation*}
		\mathcal{M}  =  \Set{f \in W^\alpha_0(I)  :  \hat{f}(\xi_n) = 0, \quad \text{for} \; 1 \leq n \leq K+1}. 
	\end{equation*}
	This is a subspace of finite co-dimension in $W^{\alpha/2}_0(I)$. Moreover, by choosing $\eta$ small enough, an estimate of the type \eqref{onto: not so useless estimate II} now implies \eqref{onto: blabla}. Indeed, for $f \in \mathcal{M}$, the left-hand side is equal to
	\begin{equation*}
		 \sum_{n = 1}^K \int_{\xi_n}^{\xi_{n+1}} \abs{\hat{f}(\xi) - \hat{f}(\xi_n)}^2w_\alpha (\xi )\dif \xi
		 \lesssim
		 \eta^2 \sum_{n = 1}^K  \int_{\xi_n}^{\xi_{n+1}} \abs{\hat{f}'(\tau)}^2 w_\alpha(\tau) \dif \tau
		 \leq 
		 \eta^2 \norm{t f}^2_{W^{\alpha/2}_0(I)}.
	\end{equation*}
	By the continuity of multiplication by the independent variable, the assertion now follows.
	
\end{proof}
\begin{proof}[Proof of Theorem \ref{sobolev matching real parts}]
	Let $f(1/2 + \im t$) denote the boundary distribution of $f \in D_\alpha(\C_{1/2})$. 
	Moreover, let $v$ be the real part of $f(1/2 + \im t)$ considered as an element of $W^{\alpha/2}(2I)$. We now argue essentially in the same way as in the proof of Theorem \ref{main result}.
	Since $R_{2I} :  \ell^2_\alpha \rightarrow W^{\alpha/2}(2I)$ is surjective, there exists a 
	sequence $(\gamma_n)_{n \in \Z^\ast}$ such that 
	\begin{equation*}
		v = \sum_{n \in \N} (\gamma_n n^{-1/2-\im t} + \gamma_{-n} n^{-1/2 + \im t}),
	\end{equation*}
	where may assume that $\gamma_{-n}=\overline{\gamma_n}$,
	and  the convergence takes place in  $W^{\alpha/2}(2I)$. It now follows that the function
	\begin{equation*}
		F(s)  =  2\sum_{n \in \N} \gamma_nn^{-s}
	\end{equation*}
	is in $\Hp^2_\alpha$ and satisfies 
	\begin{equation*}
		\lim_{\sigma \rightarrow 1/2^+} \Re F(\sigma + \im t) = v	 
	\end{equation*}
	in the sense of distributions on $2I$. Hence, the function $F-f$ is analytic on $\C_{1/2}$, and has vanishing real parts on $2I$ in the sense of distributions. 
	
	The analytic continuation of the function $F-f$ to all of $\C_I$ is 
	now standard and is obtained in much the same way as in Theorem \ref{main result}. E.g., one may consider a conformal map
	$g:\mathbb{D}\to \Omega,$ where $\Omega\subset\{ \sigma >1/2\}$ is a smooth domain having $3I$ as a boundary segment, whence
	$g$ itself continues analytically over $3I$.
	It follows that the analytic function $(F-f)\circ g$ satisfies
	a bound $|(F-f)\circ g(z)|\leq (1-|z|)^{-\beta}$ for some $\beta >0,$ whence its distributional boundary values are well-defined and one
	obtains it (up to a constant) as the Szeg\"o-integral of the distributional boundary values
	of its real part. The desired analytic continuation follows immediately. 
	
	The assertion about the smallest element and the existence of a norm constant follow exactly as in the proof of Theorem \ref{main result}.
\end{proof}

\section*{acknowledgements}
	This paper is part of the PhD thesis of the first author, who would like to thank his supervisor K.~Seip for helpfull discussions.

\bibliographystyle{amsplain}
\bibliography{bibliotek}

\end{document}